\documentclass[leqno]{amsart}

\usepackage{amsmath,amsthm,amssymb,amscd}
\usepackage{mathrsfs}
\usepackage{indentfirst}
\usepackage{fullpage}
\usepackage{url}
\usepackage[all]{xy}
\usepackage{bbm}

\newcommand{\X}[1]{\ensuremath{X(#1)}}
\newcommand{\Xw}[2]{\ensuremath{X(#1)_{/#2}}}

\newcommand{\ord}[1]{\ensuremath{\eta_p^{\mathrm{ord}}(#1)}}
\newcommand{\et}[1]{\ensuremath{\eta_p^{\mathrm{\acute{e}t}}(#1)}}
\newcommand{\xw}[2]{\ensuremath{\left(X(#1),\eta^{(p)}(#1),\et{#1}\times\ord{#1}\right)_{/#2}}}
	
\numberwithin{equation}{section}

\newtheorem*{qexp}{Theorem}
\newtheorem{factoring}{Lemma}[section]
\newtheorem{improv}[factoring]{Lemma}
\newtheorem{switch}[factoring]{Remark}
\newtheorem{adelic}[factoring]{Proposition}
\newtheorem{frob}{Lemma}[section]
\newtheorem{ver}[frob]{Lemma}
\newtheorem{Serre--Tate}[frob]{Proposition}

\newtheorem{taylor}{Proposition}[section]
\newtheorem{shift}[taylor]{Lemma}
\newtheorem{key}[taylor]{Proposition}
\newtheorem{support}[taylor]{Corollary}
\newtheorem*{recipe}{Recipe for a choice of $\lambda$}
\newtheorem{back}[taylor]{Lemma}
\newtheorem*{main}{Main Theorem}
\newtheorem{remark}[taylor]{Remark}

\begin{document}
\author{Miljan Brako\v cevi\'c}
\title{Anticyclotomic $p$-adic $L$-function of central critical Rankin--Selberg $L$-value}
\date{\today}
\address{Department of Mathematics, UCLA, Los Angeles, CA 90095-1555, USA} 
\email{miljan@math.ucla.edu}
\subjclass[2000]{11F67}
\keywords{Katz's measure; Shimura curves; Serre--Tate deformation space; Rankin--Selberg $L$-values}
\thanks{This research work is partially supported by Prof. Haruzo Hida's NSF grant DMS-0753991
through graduate student research fellowship and by UCLA Dissertation Year Fellowship.}

\begin{abstract}
\noindent Let $M$ be an imaginary quadratic field, $\mathbf{f}$ a Hecke eigen-cusp form on $\mathrm{GL}_2(\mathbb{Q})\backslash \mathrm{GL}_2(\mathbb{A})$ and $\hat{\pi}_{\mathbf{f}}$ the unitary base-change to $M$ of automorphic representation $\pi_{\mathbf{f}}$ associated to $\mathbf{f}$. Take a unitary arithmetic Hecke character $\chi$ of $M^{\times} \backslash M^{\times}_{\mathbb{A}}$ inducing the inverse of the central character of $\pi_{\mathbf{f}}$. The celebrated formula of Waldspurger relates the square of $L_{\chi}(\mathbf{f}):=\int_{M^{\times} \backslash M^{\times}_{\mathbb{A}}}  \mathbf{f}(t) \chi(t)d^{\times}t$ to the central critical value $L(\frac{1}{2},\hat{\pi}_{\mathbf{f}}\otimes \chi)$. We construct a new $p$-adic $L$-function that interpolates $L_{\chi}(\mathbf{f})$ over arithmetic $\chi$'s for a cusp form $\mathbf{f}$ in the spirit of the landmark result of Katz where he did that for Eisenstein series. 
\end{abstract}
\maketitle
\section{Introduction}
\label{sec:Introduction}
Let $M$ be an imaginary quadratic field, $R$ its ring of integers and $p$ a fixed prime so that the following assumption holds throughout the paper:
\[ \tag{ord}  p\; \text{splits into product of primes}\; p=\mathfrak{p}\bar{\mathfrak{p}}\; \text{in}\; R.\]

\noindent We fix two embeddings   $\iota_\infty :\bar{\mathbb{Q}} \hookrightarrow \mathbb{C}$ and  $\iota_p : \bar{\mathbb{Q}} \hookrightarrow \mathbb{C}_p$ and write $c$ for both complex conjugation of  $\mathbb{C}$ and $\bar{\mathbb{Q}}$ induced by $\iota_\infty$. Then we can choose an embedding $\sigma : M \hookrightarrow \bar{\mathbb{Q}}$ such that the $p$-adic place induced by $\iota_p \circ \sigma$ is distinct from the one induced by $\iota_p \circ c \circ \sigma$. This choice $\Sigma = \{ \sigma \}$ is called $p$-ordinary CM type and its existence is equivalent to (ord). Fix an embedding $M\hookrightarrow \mathrm{M}_2(\mathbb{Q})$ so that we have ${M^{\times} \backslash M^{\times}_{\mathbb{A}}}  \hookrightarrow G(\mathbb{Q})\backslash G(\mathbb{A})$ for the algebraic group  $G=\mathrm{GL}(2)_{/\mathbb{Q}}$. Let $f$  be a normalized Hecke eigen-cusp form of level $\Gamma_0(N)$, $N\geq 1$, weight $k\geq1$, and nebentypus $\psi$ and let $\mathbf{f}$ be its corresponding adelic form on  $G(\mathbb{Q})\backslash G(\mathbb{A})$ with central character $\boldsymbol{\psi}$ (see Section \ref{sec:hecke} for definition). All reasonable adelic lifts of $f$ are equal up to twists by powers of the conductor one character $|\mathrm{det}(g)|_{\mathbb{A}}$ and $\mathbf{f}^u(g) := \mathbf{f}|\boldsymbol{\psi}(\mathrm{det}(g))|^{-1/2}$ is the unique one which generates a unitary automorphic representation  $\pi_\mathbf{f}$. We take the base-change lift  $\hat{\pi}_{\mathbf{f}}$ to $\mathrm{Res}_{M/\mathbb{Q}}G$. Pick an arithmetic Hecke character $\chi$ of $M^{\times} \backslash M^{\times}_{\mathbb{A}}$ such that $\chi|_{\mathbb{A}^\times} = \boldsymbol{\psi}^{-1}$ and set $\chi^-:=(\chi \circ c)/|\chi|$. The celebrated formula of Waldspurger \cite{Wa} relates the square of $L_{\chi}(\mathbf{f}):=\int_{M^{\times} \backslash M^{\times}_{\mathbb{A}}}  \mathbf{f}(t) \chi(t)d^{\times}t$ to the central critical value $L(\frac{1}{2},\hat{\pi}_{\mathbf{f}}\otimes \chi^-)$ of Rankin--Selberg convolution of $f$ and theta series $\theta(\chi^-)$ associated to $\chi^-$, up to finitely many ambiguous local factors. In the recent paper \cite{HidaNV} Hida computes explicit formula for $L_{\chi}(\mathbf{f})^2$ without any ambiguity covering all arithmetic Hecke characters $\chi$ with $\chi|_{\mathbb{A}^\times} = \boldsymbol{\psi}^{-1}$ producing the central critical value. 

The goal of this paper is to interpolate $p$-adically $L_{\chi}(\mathbf{f})$ over arithmetic $\chi$'s for a cusp form $\mathbf{f}$ in the spirit of the landmark paper of Katz \cite{Ka} where he did that for Eisenstein series. To briefly outline our idea,  let $N=\prod_ll^{\nu(l)}$ be the prime factorization and denote by $N_{ns}=\prod_{l \, \text{non-split}}l^{\nu(l)}$ its ``non-split'' part. Consider the order $R_{N_{ns}p^n}:=\mathbb{Z}+N_{ns}p^nR$ and let $\mathrm{Cl}_M^-(N_{ns}p^n):=\mathrm{Pic}(R_{N_{ns}p^n})$, that is, the group of $R_{N_{ns}p^n}$-projective fractional ideals modulo globally principal ideals. By class field theory, $\mathrm{Cl}_M^-(N_{ns}p^n)$ is the Galois group $\mathrm{Gal}(H_{N_{ns}p^n}/M)$ of the ring class field $H_{N_{ns}p^n}$ of conductor $N_{ns}p^n$. We define anticyclotomic class group modulo $N_{ns}p^\infty$, $\mathrm{Cl}_M^-(N_{ns}p^\infty):=\varprojlim_n\mathrm{Cl}_M^-(N_{ns}p^n)$ for the projection $\pi_{m+n,n}:\mathrm{Cl}_M^-(N_{ns}p^{m+n})\to\mathrm{Cl}_M^-(N_{ns}p^n)$ taking $\mathfrak{a}$ to $\mathfrak{a}R_{N_{ns}p^n}$. By class field theory, the group $\mathrm{Cl}_M^-(N_{ns}p^\infty)$ is isomorphic to the Galois group of the maximal ring class field of conductor $N_{ns}p^\infty$  of $M$, namely $H_{N_{ns}p^\infty}=\bigcup_n H_{N_{ns}p^n}$. To accomplish our goal we construct a $W$-valued $p$-adic measure $\mathrm{d}\mu_f$ on $\mathrm{Cl}_M^-(N_{ns}p^\infty)$ such that its moments interpolate the special values at carefully chosen CM points of Hecke eigen-cusp form $f$ and $p$-adic modular forms obtained by applying Katz's $p$-adic differential operator to $f$. Here $W$ is the ring of Witt vectors with coefficients in an algebraic closure $\bar{\mathbb{F}}_p$ of the finite field of $p$ elements $\mathbb{F}_p$, regarded as $p$-adically closed discrete valuation ring inside $p$-adic completion $\mathbb{C}_p$ of $\bar{\mathbb{Q}}_p$. We set $\mathcal{W}=\iota_p^{-1}(W)$ which is a strict henselization of $\mathbb{Z}_{(p)}=\mathbb{Q}\cap\mathbb{Z}_p$. A subtle construction of CM points on a Shimura variety that underlies the central critical values  $L(\frac{1}{2},\hat{\pi}_{\mathbf{f}}\otimes \chi^-)$ allows us to relate $\int_{\mathrm{Cl}_M^-(N_{ns}p^\infty)}\chi\mathrm{d}\mu_f$ for a very broad class of infinite order arithmetic Hecke characters $\chi$ to the ``square root'' of $L(\frac{1}{2},\hat{\pi}_{\mathbf{f}}\otimes \chi^-)$ by utilizing Hida's computation. These CM points are associated to proper ideal classes of $R_{N_{ns}p^n}$ and carry elliptic curves with complex multiplication by $M$ having ordinary reduction over $\mathcal{W}$ and equipped with suitable level structures also defined over $\mathcal{W}$. Thus, if we denote by $\delta_k$ the differential operator of Maass--Shimura  and by $d$ its $p$-adic analogue invented by Katz, then for a modular form $f$ integral over $\mathcal{W}$, the values of $\delta_k^mf$, $m\geq0$, at our CM points lie in $\mathcal{W}$ and, up to canonical period, coincide with values of $d^mf$ at the same points by the results of Shimura and Katz (\cite{Ka} and \cite{Sh75}).

The Shimura variety $Sh$ in question is Shimura curve classifying elliptic curves up to isogeny constructed by Shimura in \cite{Sh66} and reinterpreted by Deligne in \cite{De71}. 
Subtleties of Hida's computation of explicit Waldspurger formula required careful study of the arithmetic geometry of this Shimura curve. Ultimately, Hecke relation among CM points on $Sh$ holds the key to the distribution relation of the cuspidal measure that we construct. Second, we study the effect of Hecke operators on the Serre--Tate coordinate $t$ in the infinitesimal neighborhoods of CM points. This is one of the novelties of the paper and we precisely determine how certain Hecke isogeny actions on the Igusa tower over an irreducible component of $Sh$ containing a CM point affect the Serre--Tate deformation space of elliptic curve carried by this CM point (see Proposition \ref{Serre--Tate}). This enables us to establish an interpolation formula in great generality.

Let $\varphi:M^{\times} \backslash M^{\times}_{\mathbb{A}}\to\mathbb{C}^\times$ be an anticyclotomic arithmetic Hecke character such that $\varphi(a_\infty)=a_\infty^{m(1-c)}$ for some $m\geq 0$ (i.e. of infinity type $(m,-m)$) and of conductor $N_{ns}p^s$ where $s\geq \mathrm{ord}_p(N)$ is an \emph{arbitrary} integer or of conductor $N_{ns}$. Let $\widehat{\varphi}:M^{\times} \backslash M^{\times}_{\mathbb{A}}\to \mathbb{C}_p^\times$ defined by $\widehat{\varphi}(x)=\varphi(x)x_p^{m(1-c)}$ be its $p$-adic avatar. Then Mazur--Mellin transform of the measure $\mathrm{d}\mu_f$ given by
\[ \mathscr{L}(f,\xi)=\int_{\mathrm{Cl}_M^-(N_{ns}p^\infty)}\xi{d}\mu_f \text{ for } \xi \in \mathrm{Hom}_{cont}(\mathrm{Cl}_M^-(N_{ns}p^\infty),W^\times), \]
is a $p$-adic analytic Iwasawa function on the $p$-adic Lie group $\mathrm{Hom}_{cont}(\mathrm{Cl}_M^-(N_{ns}p^\infty),W^\times)$ and for characters $\varphi$ as above we have the following interpolation property (see Main theorem in Section \ref{sec:mainthm}): 
\[\left(\frac{\mathscr{L}(f,\widehat{\varphi})}{\mathrm{\Omega}_p^{k+2m}}\right)^2=C(\lambda,\varphi,m)\frac{L(\frac{1}{2},\hat{\pi}_{\mathbf{f}}\otimes (\varphi\lambda)^-)}{\left(\mathrm{\Omega}_\infty^{k+2m}\right)^2}\] 
for an explicit constant $C(\lambda,\varphi,m)$, where $\lambda$ is a fixed choice of Hecke character of $M^{\times} \backslash M^{\times}_{\mathbb{A}}$ of infinity type $(k,0)$ such that $\lambda|_{\mathbb{A}^\times} = \boldsymbol{\psi}^{-1}$, and $\mathrm{\Omega}_p\in W^\times$ and $\mathrm{\Omega}_\infty\in\mathbb{C}^\times$ are N\'eron periods of an elliptic curve of CM type $\Sigma$ defined over $\mathcal{W}$.

We emphasize that no condition on the conductor $N$ of cusp form $f$ is ever imposed and that the interpolation formula is optimal in the sense that we don't impose any restrictions on the conductor of arithmetic Hecke character $\varphi\lambda$ other than the optimal ones imposed by the criticality assumption $\lambda|_{\mathbb{A}^\times} = \boldsymbol{\psi}^{-1}$. In particular, conductor at split primes is also allowed (see Remark \ref{remark}).
The main theorem is stated at the end of the paper, after providing all necessary notation and collecting running assumptions.  Fix a decomposition $\mathrm{Cl}_M^-(N_{ns}p^\infty)=\Delta\times\Gamma$ where $\Gamma$ is a torsion free subgroup topologically isomorphic to $\mathbb{Z}_p$ and $\Delta$ is a finite group. An arbitrary character $\xi:\mathrm{Cl}_M^-(N_{ns}p^\infty)\to W^\times$ can be uniquely represented in the form $\xi=\xi_{\Delta}\xi_{\Gamma}$ where $\xi_{\Delta}$ is a character of $\Delta$ trivial on $\Gamma$, and $\xi_{\Gamma}$ is a character of $\Gamma$ trivial on $\Delta$. Denote by $\langle \cdot \rangle:\mathrm{Cl}_M^-(N_{ns}p^\infty)\to \Gamma$ the natural projection, and let $\xi_{\Gamma,t}$ denotes the torsion part of $\xi_{\Gamma}$. Our interpolation formula covers $p$-adic avatars of the form $\xi_{\Delta}\xi_{\Gamma,t}\langle \cdot \rangle^m$ where $\xi_{\Delta}$, $\xi_{\Gamma,t}$ and $m\geq 0$ are arbitrary and moving independently.

This $p$-adic $L$-function exhibits crucial distinction from the  $p$-adic $L$-functions relevant to the central critical value $L(\frac{1}{2},\hat{\pi}_{\mathbf{f}}\otimes \chi)$ constructed by Vatsal in \cite{Va} following the method of Bertolini and Darmon from \cite{BD}, and by Perrin-Riou in \cite{P-R} following the method of Hida from \cite{H85} under slightly different assumptions. First of all, Vatsal's and Perrin-Riou's constructions are limited to weight 2 cusp form $f$ and finite order characters $\chi$ whereas we don't impose any restriction on the weight $k\geq1$ of $f$ and allow infinitely many infinite order characters in the interpolation. Second, both constructions are made under the assumption that the cusp form $f$ is ordinary at $p$, a restriction we don't impose. Most importantly, both constructions are characterized by the weight of cusp form $f$ being strictly greater than the weight of theta series $\theta(\chi)$ associated to Hecke character $\chi$, whereas we treat exactly the opposite case. Not only does this make canonical period in their constructions to be the one constructed by Hida in \cite{H88ajm}, namely
\[ \mathrm{\Omega}_f=\frac{\langle f,f\rangle}{\eta_0}\] 
where $\langle f,f\rangle$ is the Petersson inner product on $\Gamma_0(N)$ and $\eta_0$ is Hida's congruence number associated to $f$, whereas our canonical period $\mathrm{\Omega}_\infty$ is N\'eron period of an elliptic curve of CM type $\Sigma$ and is more natural in the context of Iwasawa theory, but also the canonical Selmer group and hence the Main Conjecture associated to our $p$-adic $L$-function are distinct from the ones in Vatsal's and Perrin-Riou's cases. 

Hida on the other hand constructs in great generality (\cite{H85} and \cite{H88aif}) a $p$-adic $L$-function that interpolates the central critical value
of the Rankin--Selberg convolution of two independent $p$-adic families of modular forms and our $p$-adic $L$-function is essentially the ``square root'' of the restriction of his general $p$-adic $L$-function to one of the variables. The precise relationship is a work in progress and will be published in a forthcoming paper, since Hida  still uses  his canonical period $\Omega_f$ as above, and his $p$-adic $L$-function interpolates the central critical $L$-value itself, rather than its ``square root'' as in our case (the same remark holds for Perrin-Riou's construction following his method). 

In the course of preparations to submit this paper, we became aware of the recent preprint of Bertolini, Darmon and Prasanna (\cite{BDP}) where they construct anticyclotomic $p$-adic $L$-function attached to normalized Hecke newform $f$ and field $M$ interpolating the central critical Rankin--Selberg $L$-value. We would like to emphasize that our work is done independently and under significantly different set of running assumptions making certain aspects of our construction substantially different. The spirit of our work also differs in the sense that their main goal goes beyond a construction of $p$-adic $L$-function as they prove a deep and beautiful theorem that is a $p$-adic analogue of Gross--Zagier formula, whereas our main goal is a cuspidal version of Eisenstein measure in \cite{Ka} whose Mazur--Mellin transform gives rise to a $p$-adic $L$-function interpolating the ``square root'' of the central critical $L$-value. In \cite{BDP} (see Assumption 6.1) the authors assume that the level $N$ of a cusp form $f$ is squarefree and the Heegner hypothesis holds; the conductor $c$ of order to which Heegner points are associated (\cite{BDP} Section 1.5) is fixed and prime to $N$ and the conductor $\mathfrak{f}_\chi$ of central critical Hecke character $\chi$ appearing in interpolation is \emph{a priori} bounded by condition $\mathfrak{f}_\chi |c\mathfrak{N}$, where $\mathfrak{N}$ is a fixed cyclic ideal of norm $N$ of the ring of integers of $M$. Following the notation there, we denote the set of such characters $\Sigma^{(2)}_{cc}(c,\mathfrak{N})$. Note that these conditions exclude the possibility that both $N$ and $\mathfrak{f}_\chi$ are divisible by $p$. On the other hand, we impose no squarefree restriction on the level $N$ of cusp form and no Heegner hypothesis is assumed. In fact, the assumptions above are opposite from assumptions we are running in the paper, namely if $N$ happens to be divisible by $p$, say $p^r\| N$ for \emph{arbitrary} $r\geq 0$, then we work with arithmetic Hecke characters whose conductor at $p$ is $p^s$ for \emph{arbitrary} $s\geq r$, and our CM points are then associated to orders of $M$ of conductor divisible by $p^s$. The difference of our interpolation formula from theirs becomes evident in the case when (apart from the conductor) the nebentypus of $f$ also ramifies at $p$. If we set $\chi_m=\lambda \cdot \varphi \cdot |\cdot|_{M_\mathbb{A}}^m$ for $\lambda$ fixed as above and we let anticyclotomic $\varphi$ vary as above, then $\chi_m$ is central critical character of $\infty$-type $(k+m,-m)$ and the authors would have to assume $\varphi$ to be unramified at $p$ (\cite{BDP} Proposition 5.3). On the other hand, we allow both the cusp form and Hecke character to be unramified at $p$.  Second, apart from being attached to $f$ and $M$, the $p$-adic $L$-function there is dependent on the pair $(c,\mathfrak{N})$ in the sense that $\Sigma^{(2)}_{cc}(c,\mathfrak{N})$ is equipped with topology induced by the topology of uniform convergence on the subgroup of $M_{\mathbb{A}}^{(\infty)}$ of ideles that are prime to $p$ and the $p$-adic $L$-function is defined as $p$\emph{-adically continuous function} on the completion of $\Sigma^{(2)}_{cc}(c,\mathfrak{N})$ with respect to this topology. (\cite{BDP} Section 5.4). On the other hand, our $p$-adic $L$-function is a $p$\emph{-adic analytic Iwasawa function} on the $p$-adic Lie group $\mathrm{Hom}_{cont}(\mathrm{Cl}_M^-(N_{ns}p^\infty),W^\times)$ as the Mazur--Mellin transfrom of a bounded measure. 

During preparations to submit this paper, Kartik Prasanna pointed us to the work of Andrea Mori (\cite{Mo}) in which the author constructs a bounded measure on $\mathbb{Z}_p$ whose power series expansion is Serre--Tate $t$-expansion of $f$ around a fixed CM point (in the sense of our Proposition \ref{taylor}) -- we learned that this idea is originally due to A. Mori and goes back to his Brandeis University thesis in 1989. However consideration in \cite{Mo} is entirely devoted to a single fixed CM point throughout the paper and no moving of it on Shimura curve was ever considered, hence the measure is defined on $\mathbb{Z}_p$ and \emph{a priori} depends on Hecke characters that will ultimately appear in interpolation, the latter being a limitation. From the point of view of our paper, construction in \cite{Mo} is relevant to a special case: when imaginary quadratic field $M$ is of class number 1, a restricted set of everywhere unramified Hecke characters is being interpolated and the level $N$ of an even weight cusp form $f$ is prime to $p$ and squarefree.

A nice feature of our result lies in the fact that the Bloch--Kato conjecture suggests that the central critical value $L(\frac{1}{2},\hat{\pi}_{\mathbf{f}}\otimes \chi)$ is a square in the coefficient field $\mathbb{Q}(f,\chi)$ up to a product of philosophically well understood Tamagawa factors. Our construction yields that the central critical value is equal to $L_{\chi}(\mathbf{f})^2$ up to a product of fudge Euler factors. We plan to compute explicitly Tamagawa factors for $L(\frac{1}{2},\hat{\pi}_{\mathbf{f}}\otimes \chi)$ and relate them to the fudge Euler factors thus establishing the assertion independently of the Bloch--Kato conjecture. 

We make two applications of our construction. In \cite{Br1} we study non-vanishing modulo $p$ of central critical Rankin--Selberg $L$-values with anticyclotomic twists using Hida's method from \cite{HidaDwork} while in \cite{Br2} we compute the Iwasawa $\mu$-invariant of the constructed anticyclotomic $p$-adic $L$-function.  

\subsection*{Acknowledgment} I would like to express my gratitude to Prof. Haruzo Hida for his generous insight and guidance, as well as constant care and support. I am indebted to the anonymous referee for careful reading and providing corrections, valuable comments and suggestions that improved the exposition of this paper. 
\tableofcontents
\newpage
\section{Elliptic curves with complex multiplication}
\label{sec:eccm}

For each $\mathbb{Z}$-lattice $\mathfrak{a}\subset M$ whose $p$-adic completion $\mathfrak{a}_p=\mathfrak{a}\otimes_\mathbb{Z}\mathbb{Z}_p$ is identical to $R\otimes_\mathbb{Z}\mathbb{Z}_p$ we consider a complex torus $\X{\mathfrak{a}}(\mathbb{C})=\mathbb{C}/\mathfrak{a}$. It follows from Shimura-Taniyama theory (\cite{ACM} 12.4) that this complex torus is algebraizable to an elliptic curve having complex multiplication by $M$ with CM type $\Sigma$ and which is defined over a number field. More precisely, if $M'$ is a reflex field of $(M,\Sigma)$, essentially by the main theorem of complex multiplication (\cite{ACM} 18.6) we can find a model \X{\mathfrak{a}} defined over an abelian extension $k$ of $M'$ such that all torsion points of \X{\mathfrak{a}} are rational over an abelian extension of $M'$ (\cite{ACM} 21.1). Adding sufficiently deep level structure we get a model that is unique up to an isomorphism. Serre--Tate's criterion of good reduction (\cite{SeTa}) implies, by further deepening the level structure, that \X{\mathfrak{a}} has good reduction over $W\cap k$. It follows that \X{\mathfrak{a}} is actually defined over the field of fractions $\mathcal{K}$ of $\mathcal{W}$ and extends to an elliptic curve over $\mathcal{W}$ still denoted by \Xw{\mathfrak{a}}{\mathcal{W}}. All endomorphisms of \Xw{\mathfrak{a}}{\mathcal{W}} are defined over $\mathcal{W}$ and its special fiber $\Xw{\mathfrak{a}}{\bar{\mathbb{F}}_p}=\Xw{\mathfrak{a}}{\mathcal{W}}\otimes \bar{\mathbb{F}}_p$ is ordinary by our assumption (ord). 

It is well known that a $\mathbb{Z}$-lattice in $M$ is actually a proper ideal of an order of $M$. Indeed, if $R(\mathfrak{a})=\{\alpha\in R|\alpha\mathfrak{a}\subset\mathfrak{a}\}$ then $R(\mathfrak{a})$ is a $\mathbb{Z}$-order of $M$. Every $\mathbb{Z}$-order $\mathcal{O}$ of $M$ is of the form $\mathcal{O}=\mathbb{Z}+cR$ for a rational integer $c$ called the conductor and the following are equivalent (see Proposition 4.11 and (5.4.2) in \cite{IAT} and Theorem 11.3 of \cite{CRT})
\begin{itemize}
\item[(1)] $\mathfrak{a}$ is $\mathcal{O}$-projective fractional ideal
\item[(2)] $\mathfrak{a}$ is locally principal, i.e. \frenchspacing the localization at each prime is principal
\item[(3)] $\mathfrak{a}$ is a proper $\mathcal{O}$-ideal, i.e. \frenchspacing $\mathcal{O}=R(\mathfrak{a})$.  
\end{itemize}
Thus, one can define class group $\mathrm{Cl}_M^-(\mathcal{O})$ to be the group of $\mathcal{O}$-projective fractional ideals modulo globally principal ideals. It is a finite group called the ring class group of conductor $c$ where $c$ is the conductor of $\mathcal{O}$. As pointed out in the introduction, we restrict our attention to $\mathrm{Cl}_M^-(cp^n)=\mathrm{Pic}(R_{cp^n})$ for $R_{cp^n}=\mathbb{Z}+cp^nR$, where $c$ is a suitable choice of integer prime to $p$. If $\mathrm{Cl}_M(cp^n)$ and $\mathrm{Cl}_{\mathbb{Q}}(cp^n)$ denote ray class groups modulo $cp^n$ of $M$ and $\mathbb{Q}$ respectively, then exact sequence

\[ 0\longrightarrow \mathrm{Cl}_{\mathbb{Q}}(cp^n)\longrightarrow \mathrm{Cl}_M(cp^n)\longrightarrow \mathrm{Cl}_M^-(cp^n)\longrightarrow 0\] yields
\[ |\mathrm{Cl}_M^-(cp^n)|=\frac{2h(M)\varphi_M(cp^n)}{|R^\times|\varphi_\mathbb{Q}(cp^n)}\]
where $h(M)$ is the class number of $M$ and $\varphi_M$ and $\varphi_\mathbb{Q}$ are Euler phi functions of $M$ and $\mathbb{Q}$ respectively. The adelic interpretation of $\mathrm{Cl}_M^-(cp^n)$ is given by
\[ \mathrm{Cl}_M^-(cp^n) = M^\times \left\backslash (M^{(\infty)}_{\mathbb{A}})^{\times}\right/(\mathbb{A}^{(\infty)})^\times \widehat{R}_{cp^n}^{\times} \]
where $\widehat{R}_{cp^n}={R}_{cp^n} \otimes_\mathbb{Z}\widehat{\mathbb{Z}}$.

\section{Algebro-geometric and $p$-adic modular forms}

Mainly to set a notation, we briefly recall basic definitions and facts from algebro-geometric theory of modular forms restricting ourselves to what we are going to need in the sequel.

\subsection{Algebro-geometric modular forms}\label{sec:agmf} 
Let $N$ be a positive integer and $B$ a fixed base $\mathbb{Z}[\frac{1}{N}]$-algebra. The modular curve $\mathfrak{M}(N)$ of level $N$ classifies pairs $(E,i_N)_{/S}$, for a $B$-scheme $S$, formed by
\begin{itemize}
\item An elliptic curve $E$ over $S$, that is, a proper smooth morphism $\pi : E\to S$ whose geometric fibers are connected curves of genus 1, together with a section $\textbf{0}:S\to E$.
\item An embedding of finite flat group schemes $i_N:\mu_N\hookrightarrow E[N]$, called level $\Gamma_1(N)$-structure,  where $E[N]$ is a scheme-theoretic kernel of multiplication by $N$ map -- it is a finite flat abelian group scheme over $S$ of rank $N^2$.
\end{itemize}
In other words, $\mathfrak{M}(N)$ is a coarse moduli scheme, and a fine moduli scheme if $N>3$, of the following functor from the category of $B$-schemes to the category \textit{SETS}
\[ \mathcal{P}(S)=[(E,i_N)_{/S}]_{/\cong}\]
where $[\quad]_{/\cong}$ denotes the set of isomorphism classes of the objects inside the brackets. If $\omega$ is a basis of $\pi_*(\Omega_{E/S})$, that is a nowhere vanishing section of $\Omega_{E/S}$, one can further consider a functor classifying triples $(E,i_N,\omega)_{/S}$:
\[ \mathcal{Q}(S)=[(E,i_N,\omega)_{/S}]_{/\cong}\; .\]
Then $a\in \mathbb{G}_m(S)$ acts on $\mathcal{Q}(S)$ via $(E,i_N,\omega)\mapsto (E,i_N,a\omega)$ and consequently $\mathcal{Q}$ is a $\mathbb{G}_m$-torsor over $\mathcal{P}$. This yields representability of $\mathcal{Q}$ by a $B$-scheme $\mathcal{M}(N)$ affine over $\mathfrak{M}(N)_{/B}$.

Fix a positive integer $k$ and a continuous character $\psi : (\mathbb{Z}/N\mathbb{Z})^\times\rightarrow B^\times$. Denote by $\zeta_N$ the canonical generator of $\mu_N$. A $B$-integral holomorphic modular form of weight $k$, level $\Gamma_0(N)$ and nebentypus $\psi$ is a function of isomorphism classes of $(E,i_N,\omega)_{/A}$, defined over $B$-algebra $A$, satisfying the following conditions:
\begin{itemize}
 \item[(G0)] $f((E,i_N,\omega)_{/A})\in A$ if $(E,i_N,\omega)$ is defined over $A$
 \item[(G1)] If $\varrho:A\rightarrow A'$ is a morphism of $B$-algebras then $f((E,i_N,\omega)_{/A}\otimes_{B} A')=\varrho(f((E,i_N,\omega)_{/A}))$;
 \item[(G2)] $f((E,i_N,a\omega)_{/A})=a^{-k}f((E,i_N,\omega)_{/A})$ for $a\in A^\times=\mathbb{G}_m(A)$;
 \item[(G3)] $f((E, i_N\circ b,\omega)_{/A})=\psi(b)f((E,i_N,\omega)_{/A})$ for $b \in (\mathbb{Z}/N\mathbb{Z})^\times$, where $b$ acts on $i_N$  by the canonical action of $\mathbb{Z}/N\mathbb{Z}$ on the finite flat group scheme $\mu_N$;
 \item[(G4)] For the Tate curve $Tate(q^N)$ over $B\otimes_{\mathbb{Z}}\mathbb{Z}((q))$ viewed as algebraization of formal quotient        
   $\widehat{\mathbb{G}}_m/q^{N\mathbb{Z}}$, its canonical differential $\omega_{Tate}^{can}$ deduced from $\frac{\mathrm{d}t}{t}$  on $\widehat{\mathbb{G}}_m$,  the canonical level $\Gamma_1(N)$-structure $i_{Tate,N}^{can}$ coming from the canonical image of the point $\zeta_N$ from  $\widehat{\mathbb{G}}_m$, and all $\alpha \in \mathrm{Aut}(Tate(q^N)[N]) \cong G(\mathbb{Z}/N\mathbb{Z})$, we have
   \[ f((Tate(q^N),\alpha\circ i_{Tate,N}^{can},\omega_{Tate}^{can}))\in B\otimes_{\mathbb{Z}}\mathbb{Z}[[q]]\; .\] 
\end{itemize}
The space of $B$-integral holomorphic modular forms of weight $k$, level $\Gamma_0(N)$ and nebentypus $\psi$ is a $B$-module of finite type and we denote it by $G_k(N,\psi;B)$.

\subsection{$p$-adic modular forms}
\label{p-adicmf}
Fix a prime number $p$ that does not divide $N$. Let $B$ be an algebra that is complete and separated in its $p$-adic topology; such algebras are called $p$-adic algebras. For an abelian scheme $E_{/S}$ we consider a Barsotti--Tate group $E[p^\infty]=\underrightarrow{\lim}_nE[p^n]$ for finite flat group schemes $E[p^n]$ equipped with closed immersions $E[p^n]\hookrightarrow E[p^m]$ for $m>n$ and the multiplication $[p^{m-n}]:E[p^m]\to E[p^n]$ which is an epimorphism in the category of finite flat group schemes. Considering a morphism of ind-group schemes $i_p:\mu_{p^\infty}\hookrightarrow E[p^\infty]$ we have a functor
\[ \widehat{\mathcal{P}}_N(A)=[(E,i_N,i_p)_{/A}]_{/\cong} \]
defined over the category of $p$-adic $B$-algebras $A$. By a theorem of Deligne--Ribet and Katz, this functor is pro-represented by the formal completion $\widehat{\mathfrak{M}}(Np^\infty)$ of $\mathfrak{M}(N)$ along the ordinary locus of its modulo $p$ fiber. A holomorphic $p$-adic modular form over $B$ is a function of isomorphism classes of $(E,i_N,i_p)_{/A}$, defined over $p$-adic $B$-algebra $A$, satisfying the following conditions:
\begin{itemize}
	\item[(P0)] $f((E,i_N,i_p)_{/A})\in A$ if $(E,i_N,i_p)$ is defined over $A$;
	\item[(P1)] If $\varrho:A\rightarrow A'$ is a $p$-adically continuous morphism of $B$-algebras then $f((E,i_N,i_p)_{/A}\otimes_{B} A')=\varrho(f((E,i_N,i_p)_{/A}))$;
	\item[(P2)]  For the Tate curve $Tate(q^N)$ over $\widehat{B((q))}$, which is a $p$-adic completion of $B((q))$, the canonical $p^\infty$-structure $i_{Tate,p}^{can}$, the canonical level $\Gamma_1(N)$-structure $i_{Tate,N}^{can}$, all $p$-adic units $z\in \mathbb{Z}_p^\times$ and all $\alpha \in \mathrm{Aut}(Tate(q^N)[N]) \cong  G(\mathbb{Z}/N\mathbb{Z})$, we have 
\[ f((Tate(q^N),\alpha \circ i_{Tate,N}^{can},z\circ i_{Tate,p}^{can}))\in B[[q]] \;.\] 
\end{itemize}
We denote the space of $p$-adic holomorphic modular forms over $B$ by $V(N;B)$. 

The fundamental $q$-expansion principle holds for both algebro-geometric and $p$-adic modular forms (\cite{DeRa} Theorem VII.3.9 and \cite{Ka76} Section 5):
\begin{qexp}[$q$-expansion principle]
\begin{itemize}
	\item[1.] The $q$-expansion maps $G_k(N,\psi;B)\to B[[q]]$ and $V(N;B)\to B[[q]]$ are injective for any ($p$-adic) algebra $B$.
	\item[2.] Let $B\subset B'$ be ($p$-adic) algebras. The following commutative diagrams 
\begin{center} 
$\begin{CD}
G_k(N,\psi;B) @>>> B[[q]] \\
@VVV @VVV\\
G_k(N,\psi;B') @>>> B'[[q]]
\end{CD} \qquad \begin{CD}
V(N;B) @>>> B[[q]] \\
@VVV @VVV\\
V(N;B') @>>> B'[[q]]
\end{CD}$ 
\end{center}
\end{itemize}
are Cartesian, that is, the image of $G_k(N,\psi;B)$ in $G_k(N,\psi;B')$ ($V(N;B)$ in $V(N;B')$) is precisely the set of ($p$-adic) modular forms whose $q$-expansions have coefficients in B.
\end{qexp}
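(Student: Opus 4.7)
The plan is to deduce both parts in parallel from geometric irreducibility of the underlying moduli spaces, following the template of Deligne--Rapoport and Katz. First I would reinterpret a form in $G_k(N,\psi;B)$ as a function on the representable functor $\mathcal{Q}$, living on the affine $\mathbb{G}_m$-torsor $\mathcal{M}(N)$ over $\mathfrak{M}(N)$, of weight $-k$ under the $\mathbb{G}_m$-action and transforming by $\psi$ under the natural $(\mathbb{Z}/N\mathbb{Z})^\times$-action on the level structure; equivalently, as a section of the $\psi$-isotypic piece of $\omega^{\otimes k}$ on $\mathfrak{M}(N)_{/B}$. The $q$-expansion is then the pullback of this section to the formal punctured neighborhood of the standard cusp, cut out by the canonical Tate triple.

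For the injectivity in part~1, assume the $q$-expansion of $f$ vanishes. After adjoining $\zeta_N$ I may work over $B[\zeta_N]$, where $\mathfrak{M}(N)_{/B[\zeta_N]}$ is geometrically connected by the Deligne--Rapoport theorem. Hence $f$ vanishes on a formal neighborhood of the standard cusp of a single irreducible component, and therefore on the entire component by scheme-theoretic density. The diamond operators $(\mathbb{Z}/N\mathbb{Z})^\times$ transitively permute the remaining cusps (and hence the components visible after base change to $B[\zeta_N]$), and condition (G3) propagates the vanishing to all of them. Faithfully flat descent along $B\hookrightarrow B[\zeta_N]$, respecting the $(\mathbb{Z}/N\mathbb{Z})^\times$-action, then yields injectivity over $B$.

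For the Cartesian property in part~2, I would use that flatness of $\mathfrak{M}(N)$ over $\mathbb{Z}[1/N]$ together with local freeness of $\omega^{\otimes k}$ makes the formation of the $\psi$-isotypic global sections compatible with arbitrary base change in $B$. Combined with injectivity over both $B$ and $B'$ (part~1) and the evident Cartesianness of the square $B\hookrightarrow B'$ against $B[[q]]\hookrightarrow B'[[q]]$, pulling back along the $q$-expansion map produces the Cartesian outer square. Concretely: any $f\in G_k(N,\psi;B')$ whose $q$-expansion lies in $B[[q]]$ must coincide, coefficient-wise in a fixed $B$-linear coordinate system on $G_k(N,\psi;B)$, with an element of $G_k(N,\psi;B)$.

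The $p$-adic statements follow from exactly the same template, with the formal scheme $\widehat{\mathfrak{M}}(Np^\infty)$ replacing $\mathfrak{M}(N)$ and the canonical $p^\infty$-structure on the Tate curve providing the $q$-expansion at a cusp of the Igusa tower. The hard part, and the only genuinely nontrivial ingredient, is the irreducibility of the Igusa tower $\widehat{\mathfrak{M}}(Np^\infty)_{/B[\zeta_N]}$: this is the deep Deligne--Ribet--Katz irreducibility theorem that underlies the entire theory of $p$-adic modular forms. Granting it, the arguments above transfer verbatim, with the $\mathbb{Z}_p^\times$-equivariance built into (P2) doing for the $p$-part of the level exactly what (G3) does for the $N$-part.
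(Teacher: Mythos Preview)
The paper does not prove this theorem; it states it as a known result with citations to \cite{DeRa} Theorem VII.3.9 and \cite{Ka76} Section~5. Your sketch follows essentially the route taken in those references: reinterpreting forms as sections of a line bundle, using geometric irreducibility of the modular curve (respectively the Igusa tower, via Deligne--Ribet--Katz) to propagate vanishing from a formal cuspidal neighborhood, and then descending.

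Two small remarks on your sketch. First, $\mathfrak{M}(N)$ for $\Gamma_1(N)$-level is already geometrically connected over $\mathbb{Z}[1/N]$, so there are no ``remaining components'' to reach via diamond operators; the propagation step is simpler than you suggest. Second, your argument for part~2 asserts that formation of global sections of $\omega^{\otimes k}$ commutes with arbitrary base change in $B$, but this is not automatic from flatness and local freeness alone (it would require control of $H^1$). The standard treatment instead extends the notion of ``modular form'' to arbitrary $B$-modules (not just $B$-algebras), proves injectivity of the $q$-expansion map in that generality, and then applies it to the quotient module $B'/B$. This is the approach in Katz's paper and is what the citation is pointing to.
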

Note that $p^\infty$-level structure $i_p:\mu_{p^\infty}\hookrightarrow E[p^\infty]$ over $A$ induces isomorphism of formal groups $\hat{i}_p:\widehat{\mathbb{G}}_m\cong\widehat{E}$ over $A$ called trivialization of $E$, where $\widehat{E}$ is the formal completion of $E$ along its zero-section. More precisely, giving a trivialization $\hat{i}_p$ is equivalent to giving a $p^\infty$-level structure $i_p$ due to equivalence between the
categories of $p$-divisible smooth connected abelian formal groups over $A$ and of
connected $p$-divisible groups over $A$. Using trivialization $\hat{i}_p$ we can push forward the canonical differential $\frac{\mathrm{d}t}{t}$ on $\widehat{\mathbb{G}}_m$ to obtain an invariant differential $\omega_p:=\hat{i}_{p,*}(\frac{\mathrm{d}t}{t})$ on $\widehat{E}$ which then extends to an invariant differential on $E$. Thus for $f\in G_k(N,\psi;B)$ we can define
\[f((E,i_N,i_p)):=f((E,i_N,\omega_p))\]
allowing us to regard an algebro-geometric holomorphic modular form as a $p$-adic one. It follows from $q$-expansion principle that $G_k(N,\psi;B)\hookrightarrow V(N;B)$ is an injection preserving $q$-expansions.

\section{Shimura curves}
\label{sec:shvar}
Recall that in Section \ref{sec:eccm}, for a $\mathbb{Z}$-lattice $\mathfrak{a}\subset M$ whose $p$-adic completion $\mathfrak{a}_p=\mathfrak{a}\otimes_\mathbb{Z}\mathbb{Z}_p$ is identical to $R\otimes_\mathbb{Z}\mathbb{Z}_p$, we constructed an elliptic curve \X{\mathfrak{a}} defined over $\mathcal{W}$. Let $\mathcal{T}(\X{\mathfrak{a}})=\varprojlim_N\X{\mathfrak{a}}[N](\bar{\mathbb{Q}})$ be its Tate module. Strictly speaking, to define the Tate module of an elliptic curve $E_{/A}$ defined over a subring $A$ of $\bar{\mathbb{Q}}$ we take a geometric point $s=\mathop{Spec}(\bar{\mathbb{Q}})\in \mathop{Spec}(A)$ in each connected component of $\mathop{Spec}(A)$ and set $\mathcal{T}(E)=\varprojlim_N E[N](\bar{\mathbb{Q}})$. 

Note that any $\widehat{\mathbb{Z}}$-basis $(w_1,w_2)$ of $\widehat{\mathfrak{a}}=\mathfrak{a}\otimes_{\mathbb{Z}}\widehat{\mathbb{Z}}$ gives rise to a level $N$-structure $\eta_N(\mathfrak{a}):(\mathbb{Z}/N\mathbb{Z})^2\cong \X{\mathfrak{a}}[N]$ given by $\eta_N(\mathfrak{a})(x,y)=\frac{xw_1+xw_2}{N}\in \X{\mathfrak{a}}[N]$. After taking their inverse limit and tensoring with $\mathbb{A}^{(\infty)}$ we get level structure
\[ \eta(\mathfrak{a}) = \varprojlim_N\eta_N(\mathfrak{a}):(\mathbb{A}^{(\infty)})^2\cong \mathcal{T}(\X{\mathfrak{a}})\otimes_{\widehat{\mathbb{Z}}}\mathbb{A}^{(\infty)}=:V(\X{\mathfrak{a}}) \; . \] 
We can remove $p$-part of $\eta(\mathfrak{a})$ and define level structure $\eta^{(p)}(\mathfrak{a})$ that conveys information about all prime-to-$p$ torsion in \X{\mathfrak{a}}:
\[ \eta^{(p)}(\mathfrak{a}):(\mathbb{A}^{(p\infty)})^2\cong
\mathcal{T}(\X{\mathfrak{a}})\otimes_{\widehat{\mathbb{Z}}}\mathbb{A}^{(p\infty)}=:V^{(p)}(\X{\mathfrak{a}}) \; . \]
Since prime-to-$p$ torsion in \Xw{\mathfrak{a}}{\mathcal{W}} is unramified at $p$ and $\X{\mathfrak{a}}[N]$ for $p\nmid N$ is \'etale whence constant over $\mathcal{W}$, the level structure $\eta^{(p)}(\mathfrak{a})$ is still defined over $\mathcal{W}$ (\cite{ACM} 21.1 and \cite{SeTa}).

Given affine algebraic group $G=\mathrm{GL}(2)_{/\mathbb{Q}}$ let $\mathbb{S}=\mathrm{Res}_{\mathbb{C}/\mathbb{R}}\mathbb{G}_m$ and denote by $h_{\mathbf{0}}:\mathbb{S}\to G_{/\mathbb{R}}$ the homomorphism of real algebraic groups sending $a+b\mathbf{i}$ to the matrix $\bigl(\begin{smallmatrix} a & -b \\ b & a \end{smallmatrix} \bigr)$. The symmetric domain $\mathfrak{X}$ for $G(\mathbb{R})$ can be identified with conjugacy class of $h_{\mathbf{0}}$ under $G(\mathbb{R})$ and is isomorphic to the union $\mathfrak{H}\sqcup\mathfrak{H}^{c}$ of complex upper and lower half planes via $g\circ h_{\mathbf{0}}\mapsto g\circ \mathbf{i}$. Here the left actions of $G(\mathbb{R})$ on $\mathfrak{X}$ and $\mathfrak{H}\sqcup\mathfrak{H}^{c}$ are by conjugation and $z\mapsto\frac{az+b}{cz+d}$, for $g= \bigl(\begin{smallmatrix} a & b \\ c & d \end{smallmatrix} \bigr)$, respectively. The pair $(G,\mathfrak{X})$ satisfies Deligne's axioms for having its Shimura curve $Sh$ (\cite{De71} and \cite{De79} 2.1.1). It was initially constructed by Shimura in \cite{Sh66} but nicely reinterpreted by Deligne in \cite{De71} 4.16--4.22 as a moduli of abelian schemes up to isogenies. Namely, Deligne realized $Sh$ as a quasi-projective smooth $\mathbb{Q}$-scheme representing moduli functor $\mathcal{F}^{\mathbb{Q}}$ from the category of abelian $\mathbb{Q}$-schemes to \textit{SETS}:
\[ \mathcal{F}^{\mathbb{Q}}(S)=\{(E,\eta)_{/S}\}_{/\approx} \] 
where $\eta:(\mathbb{A}^{(\infty)})^2\cong \mathcal{T}(E)\otimes_{\widehat{\mathbb{Z}}}\mathbb{A}^{(\infty)}=:V(E)$ is a $\mathbb{Z}$-linear isomorphism and two pairs $(E,\eta)_{/S}$ and $(E',\eta')_{/S}$ are isomorphic up to an isogeny, which we write $(E,\eta)_{/S}\approx (E',\eta')_{/S}$, if there exists an isogeny $\phi:E_{/S}\to E'_{/S}$ such that $\phi\circ\eta=\eta'$.

The pairs $(E,\eta^{(p)})_{/S}$ for a $\mathcal{W}$-scheme $S$ are classified up to isogenies of degree prime to $p$ by a $p$-integral model $Sh_{/\mathcal{W}}^{(p)}$ of $Sh_{/\mathbb{Q}}$ constructed by Kottwitz in \cite{Ko}. In fact, scheme $Sh^{(p)}$ is smooth over $\mathbb{Z}_{(p)}$ and $Sh^{(p)}\otimes_{\mathbb{Z}_{(p)}}\mathbb{Q}=Sh/G(\mathbb{Z}_p).$

Each adele $g\in G(\mathbb{A}^{(\infty)})$ acts on a level structure $\eta$ by $\eta\mapsto\eta\circ g$ inducing $G(\mathbb{A}^{(\infty)})$-action on $Sh$.
A sheaf theoretic coset $\bar{\eta}=\eta K$ for an open compact subgroup $K\subset G(\mathbb{A}^{(\infty)})$ is called a level $K$-structure. The level $K$-structure is defined over base scheme $S$ if for each geometric point $s\in S$ we have $\sigma\circ\bar{\eta}=\bar{\eta}$ for all $\sigma\in\pi_1(S,s)$. The choice of $s$ does not matter in the sense that if the condition is fulfilled for one geometric point in each connected component of $S$, it is valid for all $s\in S$. The quotient $Sh_K=Sh/K$ represents the following quotient functor
\[\mathcal{F}_K(S)=\{(E,\bar{\eta})_{/S}\}_{/\approx} \qquad \] and $Sh=\varprojlim_KSh_K$ when $K$ runs over open compact subgroups of $G(\mathbb{A}^{(\infty)})$.

Let $N$ be a positive integer prime to $p$. Note that if $x=(E,i_N)\in \mathfrak{M}(N)(S)$ for a $\mathcal{W}$-scheme $S$, one can choose $\eta^{(p)}$ so that $\eta^{(p)} \; \mathrm{mod} \; \widehat{\Gamma}_1(N)=i_N$ and get a point $x=(E,\eta^{(p)})\in Sh^{(p)}(S)$ projecting down to $x=(E,i_N)\in \mathfrak{M}(N)(S)$. In this paper, $\widehat{\Gamma}_0(N)=\{\bigl(\begin{smallmatrix} a & b \\ c & d \end{smallmatrix} \bigr)\in G(\widehat{\mathbb{Z}})|\, c\in N\widehat{\mathbb{Z}} \}$ and $\widehat{\Gamma}_1(N)=\{\bigl(\begin{smallmatrix} a & b \\ c & d \end{smallmatrix} \bigr)\in \widehat{\Gamma}_0(N) |\, d-1\in N\widehat{\mathbb{Z}} \}$. By universality, we get a morphism $\mathfrak{M}(N)\to Sh^{(p)}/\widehat{\Gamma}_1(N)$ and the image of $\mathfrak{M}(N)$ gives a geometrically irreducible component of $Sh^{(p)}/\widehat{\Gamma}_1(N)$. The $G(\mathbb{A}^{(\infty)})$-action on $Sh^{(p)}$ given by $\eta^{(p)}\mapsto \eta^{(p)}\circ g^{(p)}$ is geometric preserving the base scheme $\mathop{Spec}(\mathcal{W})$.

The complex points of the Shimura curve $Sh$ are given by the projective limit under the inclusion relation of open compact subgroups of $G(\mathbb{A}^{(\infty)})$ and they have the following expression:
\[ Sh(\mathbb{C})= G(\mathbb{Q})\left\backslash \left( \mathfrak{X}\times G(\mathbb{A}^{(\infty)})\right) \right/ Z(\mathbb{Q})\]
where $Z$ is the center of $G$ and the action is given by $\gamma(z,g)u=(\gamma(z),\gamma gu)$ for $\gamma \in G(\mathbb{Q})$ and $u\in Z(\mathbb{Q})$ (\cite{De79} Proposition 2.1.10 and \cite{Mi} page 324 and Lemma 10.1). 

An important point in Deligne's treatment of $Sh_{/\mathbb{Q}}$ is that instead of functor $\mathcal{F}^{\mathbb{Q}}$ one can consider isomorphic functor $\widetilde{\mathcal{F}}^{\mathbb{Q}}$ from the category of abelian $\mathbb{Q}$-schemes to \textit{SETS}:
\[ \widetilde{\mathcal{F}}^{\mathbb{Q}}(S)= \{(E',\eta)_{/S}|\, \exists E\in \mathcal{F}^{\mathbb{Q}}(S)\, :\, E'_{/S}\approx E_{/S}\, ,\, \eta(\widehat{\mathbb{Z}}^2)=\mathcal{T}(E')\}_{/\cong} \] and $\cong$ is not just induced from an isogeny, but rather an isomorphism of abelian schemes. Imposing the extra condition $\eta(\widehat{\mathbb{Z}}^2)=\mathcal{T}(E')$ is compensated by tightening equivalence from ``isogenies'' to ``isomorphisms''.

The isomorphism of functors $\mathcal{F}^{\mathbb{Q}}\cong\widetilde{\mathcal{F}}^{\mathbb{Q}}$ is realized by finding a unique pair $(E',\eta)$ (up to isomorphism) with $\eta(\widehat{\mathbb{Z}}^2)=\mathcal{T}(E')$ in $S$-isogeny class of a given $(E,\eta)$ in $\mathcal{F}^{\mathbb{Q}}(S)$. Due to its relevance for our construction of CM points on $Sh$, let us briefly recall this standard procedure in construction of Shimura varieties (\cite{PAF} 4.2.1). It suffices to show $\mathcal{F}_K^{\mathbb{Q}}\cong\widetilde{\mathcal{F}}_K^{\mathbb{Q}}$ for every open compact subgroup $K\subset G(\mathbb{A}^{(\infty)})$ such that $K\widehat{\mathbb{Z}}^2=\widehat{\mathbb{Z}}^2$. Let $(E,\eta)\in\mathcal{F}_K^{\mathbb{Q}}(S)$. There are three cases to distinguish:

\underline{Case 1}: We assume that $\eta(\widehat{\mathbb{Z}}^2)$ contains $\mathcal{T}(E)$. Then $C=\eta(\widehat{\mathbb{Z}}^2)/\mathcal{T}(E)$ can be naturally identified with a subgroup of $E$. Let $s\in S$ be an arbitrary geometric point and $\sigma \in \pi_1(S,s)$. Then $\sigma\circ\eta=\eta\circ\tilde{\sigma}$ for some $\tilde{\sigma}\in K$ implying that for $x=\eta(v)\, \mathrm{mod}\, \mathcal{T}(E)$ with $v\in\widehat{\mathbb{Z}}$ we have $\sigma(x)=\sigma(\eta(v))=\eta(\tilde{\sigma}v)$ and $\tilde{\sigma}v\in\widehat{\mathbb{Z}}^2$ since $\widehat{\mathbb{Z}}^2$ is stable under $K$. It follows that subgroup $C$ is stable under $\pi_1(S,s)$ for all geometric points $s\in S$ proving that $C$ is a subgroup scheme of $E$ defined over $S$. Thus the quotient $E'=E/C$ is abelian scheme over $S$ (\cite{ABV} Section 12) and we have $\pi_1(S,s)$-equivariant exact sequence \[0\longrightarrow\mathcal{T}(E)\longrightarrow\mathcal{T}(E')\longrightarrow C\longrightarrow0\] 
yielding $\eta:(\mathbb{A}^{(\infty)})^2\cong V(E)=V(E')$ with the exact identity $\eta(\widehat{\mathbb{Z}}^2)=\mathcal{T}(E')$. The pairs $(E,\eta)_{/S}$ and $(E',\eta)_{/S}$ define the same element of $\mathcal{F}_K^{\mathbb{Q}}(S)$.

\underline{Case 2}: We assume $\eta(\widehat{\mathbb{Z}}^2)$ is contained in $\mathcal{T}(E)$. Then canonical identification $\pi_1(E,\textbf{0})=\mathcal{T}(E)$ (\cite{ABV} Section 18) yields a unique abelian scheme $E'_{/S}$ with $\mathbb{Z}$-linear \'etale isogeny $\pi:E'\to E$ with $\eta(\widehat{\mathbb{Z}}^2)=\mathcal{T}(E')$ making $(E',\eta)_{/S}$ the desired pair.

\underline{Case 3}: The case when $\eta(\widehat{\mathbb{Z}}^2)$ and $\mathcal{T}(E)$ are not related by an inclusion is easily handled by successive application of previous two cases. Namely, we can consider $\mathbb{Z}$-lattices $L'$ with $\widehat{L}'=\eta^{-1}(\mathcal{T}(E))+\widehat{\mathbb{Z}}^2$ and $L''$ with $\widehat{L}''=\eta^{-1}(\mathcal{T}(E))\cap\widehat{\mathbb{Z}}^2$. By case 1, starting from $(E,\eta)_{/S}$ we first create $(E'',\eta'')_{/S}$ in the $S$-isogeny class of $(E,\eta)_{/S}$ with $\mathcal{T}(E'')=\eta(\widehat{L}')$. Then applying case 2, from $(E'',\eta'')_{/S}$ we create $(E',\eta)_{/S}$ with $\mathcal{T}(E')=\eta(\widehat{\mathbb{Z}}^2)$ as desired.

\section{CM points on Shimura curves and differential operators}
In this section we associate to each proper $R_{cp^n}$-ideal $\mathfrak{a}$ prime to $p$ a CM point on appropriate moduli scheme and recall definition of differential operators of Maass--Shimura and Katz and their rationality properties at CM points of modular forms. Note that $n$ and fixed $c$ are arbitrary positive integers and $c$ is prime to $p$.

\subsection{Construction of CM points}
\label{subsec:cm}
 The CM points associated to proper $R_{cp^n}$-ideals $\mathfrak{a}$ prime to $p$ are built in three stages. First, we equip \Xw{R}{\mathcal{W}} with appropriate level structures and a fixed choice of invariant differential. Secondly, for every proper $R_c$-ideal $\mathfrak{A}$ such that $\mathfrak{A}_p=R\otimes_\mathbb{Z}\mathbb{Z}_p$, we then induce from \Xw{R}{\mathcal{W}} corresponding level structures and an invariant differential on \Xw{\mathfrak{A}}{\mathcal{W}}. Finally, if $C\subset \X{\mathfrak{A}}[p^n]$ is a suitable rank $p^n$ subgroup scheme of a finite flat group scheme $\X{\mathfrak{A}}[p^n]$ that is \'etale locally isomorphic to $\mathbb{Z}/p^n\mathbb{Z}$ after faithfully flat extension of scalars, we examine geometric quotient of \X{\mathfrak{A}} by $C$ (\cite{ABV} Section 12) and explain how it gives rise to desired CM point associated to a proper $R_{cp^n}$-ideal $\mathfrak{a}$ prime to $p$.

Any choice of $\widehat{\mathbb{Z}}$-basis $(w_1,w_2)$ of $\widehat{R}=R\otimes_{\mathbb{Z}}\widehat{\mathbb{Z}}$ gives rise to a level structure $\eta^{(p)}(R):(\mathbb{A}^{(p\infty)})^2\cong V^{(p)}(\X{R})$ defined over $\mathcal{W}$ as explained at the beginning of previous section. In particular, fixing a choice of $z_1\in R$ such that $R=\mathbb{Z}+\mathbb{Z}z_1$ we get a $\widehat{\mathbb{Z}}$-basis $(w_1,w_2)$ of $\widehat{R}$.
To define appropriate level structure at $p$, recall that a special fiber $\Xw{R}{\bar{\mathbb{F}}_p}= \Xw{R}{\mathcal{W}}\otimes\bar{\mathbb{F}}_p$ of elliptic curve \Xw{R}{\mathcal{W}} is ordinary under (ord). We have $\X{R}[p^\infty]\cong \mathbb{Q}_p/\mathbb{Z}_p \oplus \mu_{p^\infty}$ over algebraically closed field $\bar{\mathbb{F}}_p$ and in particular, the special fiber \Xw{R}{\bar{\mathbb{F}}_p} has a $p^\infty$-level structure $i:\mu_{p^\infty}\hookrightarrow\X{R}[p^\infty]$ over $\bar{\mathbb{F}}_p$. This level structure arises from the fact that, since \X{R} has ordinary reduction over $\mathcal{W}$, the connected component $\X{R}[p^m]^\circ$ of finite flat group scheme $\X{R}[p^m]$ is isomorphic to $\mu_{p^m}$ and sharing with \Xw{R}{\bar{\mathbb{F}}_p} tangent space $\mathop{Lie}(\X{R})$ at the origin. Thus $\X{R}[p^\infty]^\circ=\bigcup_m\X{R}[p^m]^{\circ}=\underrightarrow{\lim}_m\mu_{p^m}=\mu_{p^\infty}$. Using Serre--Tate deformation theory (\cite{Ka78}), \X{R} sits at the origin of Serre--Tate deformation space and we can lift $i$ to a $p^\infty$-level structure $\ord{R}:\mu_{p^\infty}\hookrightarrow\X{R}[p^\infty]$ defined over $\mathcal{W}$. By Cartier duality, we get \'etale part of level structure at $p$, namely $\et{R}:\mathbb{Q}_p/\mathbb{Z}_p\cong\X{R}[p^\infty]^{\acute{e}t}$ over $\mathcal{W}$, as Cartier dual inverse of the ordinary part $\ord{R}:\mu_{p^\infty}\cong \X{R}[p^\infty]^\circ$. In summary, the $p$-part of level structure can essentially be written as the auto-dual short exact sequence
\[0\longrightarrow \mu_{p^\infty}\longrightarrow \X{R}[p^\infty]\longrightarrow \mathbb{Q}_p/\mathbb{Z}_p\longrightarrow 0\quad \text{over}\; \mathcal{W}\;.\]
In the language of Serre--Tate deformation theory, this is nothing but connected component--\'etale quotient exact sequence of \textit{fppf}-sheaves
\[0\longrightarrow \X{R}[p^\infty]^\circ \longrightarrow \X{R}[p^\infty] \longrightarrow\X{R}[p^\infty]^{\acute{e}t} \longrightarrow 0\quad \text{over}\; \mathcal{W}\;,\]
whose extension class uniquely determines the structure of Barsotti--Tate group $\X{R}[p^\infty]$ (\cite{Ka78}). 
Note that the above exact sequences describing $p$-part of level structure on \Xw{R}{\mathcal{W}} split over $\mathcal{W}$. Indeed, $\X{R}[\mathfrak{p}^\infty]$ is multiplicative (\'etale locally) and $\X{R}[\bar{\mathfrak{p}}^\infty]$ is \'etale over $\mathcal{W}$ and we can identify $\X{R}[\mathfrak{p}^\infty]\cong\X{R}[p^\infty]^\circ$ and $\X{R}[\bar{\mathfrak{p}}^\infty]\cong\X{R}[p^\infty]^{\acute{e}t}$. Since $R\otimes_{\mathbb{Z}}\mathbb{Z}_p$ splits into $R\otimes_{\mathbb{Z}}\mathbb{Z}_p=R_{\bar{\mathfrak{p}}}\oplus  R_{\mathfrak{p}}$ and  $R\otimes_{\mathbb{Z}}\mathbb{Z}_p$ is a subring of $\mathrm{End}(\X{R}[p^\infty])$ we have splitting $\X{R}[p^\infty]=\X{R}[\bar{\mathfrak{p}}^\infty]\oplus\X{R}[\mathfrak{p}^\infty]$
over any ring over which \X{R} and its endomorphism algebra are defined, hence over $\mathcal{W}$. The triple $x(R)=\xw{R}{\mathcal{W}}$ is called a CM point on $Sh$. (The reason for identifying $R\otimes_{\mathbb{Z}}\mathbb{Z}_p$ with $R_{\bar{\mathfrak{p}}}\oplus  R_{\mathfrak{p}}$ in that order is explained in Remark \ref{switch}.) We choose and fix once and for all an invariant differential $\omega(R)$ on \Xw{R}{\mathcal{W}} so that $H^0(\X{R},\Omega_{\Xw{R}{\mathcal{W}}})=\mathcal{W}\omega(R)$. 

Let $\mathfrak{A}$ be a proper $R_c$-ideal whose $p$-adic completion $\mathfrak{A}_p=\mathfrak{A}\otimes_\mathbb{Z}\mathbb{Z}_p$ is identical to $R\otimes_\mathbb{Z}\mathbb{Z}_p$. Regarding $c$ as an element of $\mathbb{A}^\times$, $(cw_1,w_2)$ is a basis of $\widehat{R}_c$ over $\widehat{\mathbb{Z}}$ yielding a level structure $\eta^{(p)}(R_c):(\mathbb{A}^{(p\infty)})^2\cong V^{(p)}(\X{R_c})$. Choosing a complete set of representatives $\{a_1,\ldots,a_{H^-}\}\subset M^{\times}_{\mathbb{A}}$ so that $M^{\times}_{\mathbb{A}}=\bigsqcup_{j=1}^{H^-}M^\times a_j\widehat{R}_c^\times M_\infty^\times$ we have $\widehat{\mathfrak{A}}=\alpha a_j\widehat{R}_c$ for some $\alpha\in M^\times$ and $1\leq j\leq H^-$ and we can define $\eta^{(p)}(\mathfrak{A})=\alpha^{-1} a_j^{-1} \eta^{(p)}(R_c)$ so that we have commutative diagram
\[\begin{CD}
\mathcal{T}^{(p)}(\X{\mathfrak{A}}) @<\alpha a_j<< \mathcal{T}^{(p)}(\X{R_c})\\
@A{\cong}A\eta^{(p)}(\mathfrak{A})A @A{\cong}A\eta^{(p)}(R_c)A\\
\widehat{\mathfrak{A}}^{(p)} @<\alpha a_j<< \widehat{R}_c^{(p)}
\end{CD}\qquad .\]
Since $\mathfrak{A}_p=R\otimes_\mathbb{Z}\mathbb{Z}_p$, \X{R\cap\mathfrak{A}} is an \'etale covering of both \X{\mathfrak{A}} and \X{R} and we get $\ord{\mathfrak{A}}:\mu_{p^\infty}\cong \X{\mathfrak{A}}[p^\infty]^\circ$ and $\et{\mathfrak{A}}:\mathbb{Q}_p/\mathbb{Z}_p\cong\X{\mathfrak{A}}[p^\infty]^{\acute{e}t}$ first by pulling back \ord{R} and \et{R} from  \X{R} to \X{R\cap\mathfrak{A}} and then by push-forward from \X{R\cap\mathfrak{A}} to \X{\mathfrak{A}}. In this way we get a CM point \[x(\mathfrak{A})=\xw{\mathfrak{A}}{\mathcal{W}}\] 
on $Sh$ associated to a proper $R_c$-ideal $\mathfrak{A}$. Similarly, $\omega(R)$ induces a differential $\omega(\mathfrak{A})$ on \X{\mathfrak{A}} first by pulling back $\omega(R)$ from  \X{R} to \X{R\cap\mathfrak{A}} and then by pull-back inverse from \X{R\cap\mathfrak{A}} to \X{\mathfrak{A}}. The projection $\pi_1:\X{R\cap\mathfrak{A}}\twoheadrightarrow\X{\mathfrak{A}}$ is \'etale so the pull-back inverse $(\pi_1^*)^{-1}:\Omega_{\X{R\cap\mathfrak{A}}/\mathcal{W}}\to\Omega_{\X{\mathfrak{A}}/\mathcal{W}}$ is an isomorphism, whence $H^0(\X{\mathfrak{A}},\Omega_{\X{\mathfrak{A}}/\mathcal{W}})=\mathcal{W}\omega(\mathfrak{A})$.

Let $C\subset \X{R_c}[p^n]$ be a rank $p^n$ subgroup scheme of a finite flat group scheme $\X{R_c}[p^n]$ that is \'etale locally isomorphic to $\mathbb{Z}/p^n\mathbb{Z}$ after faithfully flat extension of scalars and such that $C\cap\X{R_c}[\mathfrak{p}^n]=\{0\}$ but also $C$ is different from $\X{R_c}[\bar{\mathfrak{p}}^n]$. Note that $\X{R_c}[p^n]=\X{R_c}[\bar{\mathfrak{p}}^n]\oplus\X{R_c}[\mathfrak{p}^n]=\mathbb{Z}/p^n\mathbb{Z}\oplus \mu_{p^n}$ over $\mathcal{W}$. If $\zeta_{p^n}$ and $\gamma_{p^n}$ are the canonical generators of $\mu_{p^n}$ and $\mathbb{Z}/p^n\mathbb{Z}$, respectively, then we actually consider $C$ to be one of $p^{n-1}(p-1)$ rank $p^n$ finite flat subgroup schemes $C_u=\langle\zeta_{p^n}^{-u}\gamma_{p^n}\rangle$ of $\X{R_c}[p^n]$, for $1\leq u\leq p^n$ and $\mathrm{gcd}(u,p)=1$. We shall examine geometric quotient of \X{R_c} by such finite flat subgroup schemes $C$ (\cite{ABV} Section 12). Since extension $W/\mathbb{Z}_p$ is unramified, $C$ as a finite flat subgruop scheme is well defined over $\mathcal{W}[\mu_{p^n}]$, which stands for finite extension of $\mathcal{W}$ obtained by adjoining a primitive $p^n$-th root of unity $\zeta_{p^n}$ inside $\bar{\mathbb{Q}}$. Thus, the geometric quotient $\X{R_c}/C$ is defined over $\mathcal{W}[\mu_{p^n}]$. If $\mathfrak{a}$ is a lattice so that $\X{R_c}/C=\X{\mathfrak{a}}$ then $\mathfrak{a}/R_c=C$ and $\mathfrak{a}$ is a $\mathbb{Z}$-lattice of $M$ because $C$ is $\mathbb{Z}$-submodule. Since $p^nC=0$ we have $p^nR_c\mathfrak{a}\subset\mathfrak{a}$ which means that $\mathfrak{a}$ is $R_{cp^n}$-ideal. Moreover $\mathfrak{a}$ is not $R_{cp^{n-1}}$-submodule so we conclude that $\mathfrak{a}$ is a proper $R_{cp^n}$-ideal. The quotient map $\pi:\X{R_c}\twoheadrightarrow\X{R_c}/C$ is \'etale over $\mathcal{W}[\mu_{p^n}]$ so we obtain level structures $\eta^{(p)}(\mathfrak{a})=\pi_{*}\eta^{(p)}(R_c)=\pi\circ\eta^{(p)}(R_c)$, $\ord{\mathfrak{a}}=\pi_{*}\ord{R_c}=\pi\circ\ord{R_c}$ and $\et{\mathfrak{a}}=\pi_{*}\et{R_c}=\pi\circ\et{R_c}$, as well as an invariant differential $\omega(\mathfrak{a})=(\pi^*)^{-1}\omega(R_c)$ on $\X{R_c}/C$. Note that $H^0(\X{\mathfrak{a}},\Omega_{\X{\mathfrak{a}}/\mathcal{W}[\mu_{p^n}]})=\mathcal{W}[\mu_{p^n}]\omega(\mathfrak{a})$ as $\pi:\X{R_c}\twoheadrightarrow\X{\mathfrak{a}}$ is \'etale and consequently  $(\pi^*)^{-1}:\Omega_{\X{R_c}/\mathcal{W}[\mu_{p^n}]}\to\Omega_{\X{\mathfrak{a}}/\mathcal{W}[\mu_{p^n}]}$ is an isomorphism. In this way we created $p^{n-1}(p-1)$ CM points
\[x(\mathfrak{a})=\xw{\mathfrak{a}}{\mathcal{W}[\mu_{p^n}]}\]
on $Sh$, equipped with an invariant differential $\omega(\mathfrak{a})$ on \X{\mathfrak{a}}, where these $\mathfrak{a}$'s are representatives of exactly $p^{n-1}(p-1)$ proper $R_{cp^n}$-ideal classes in $\mathrm{Cl}_M^-(cp^n)$ that project to proper ideal class of $\bar{\mathfrak{p}}^{-n}R_c$ in $\mathrm{Cl}_M^-(c)$. 

Let $\{\mathfrak{A}_1,\ldots,\mathfrak{A}_{H^-}\}$ be a complete set of representatives  for $\mathrm{Cl}_M^-(c)$. In the same fashion as above, by considering geometric quotients $\X{\mathfrak{A}_j}/C_u$ of \Xw{\mathfrak{A}_j}{\mathcal{W}} by rank $p^n$ \'etale finite flat subgroups schemes $C_u=\langle\zeta_{p^n}^{-u}\gamma_{p^n}\rangle\subset \X{\mathfrak{A}_j}[p^n]$, where $1\leq u\leq p^n$ and $\mathrm{gcd}(u,p)=1$, we create CM points
\[x(\mathfrak{a}_{j,u})=\xw{\mathfrak{a}_{j,u}}{\mathcal{W}[\mu_{p^n}]}\]
on $Sh$, equipped with an invariant differential $\omega(\mathfrak{a}_{j,u})$ on \X{\mathfrak{a}_{j,u}}, where for every fixed $1\leq j\leq H^-$, these $\mathfrak{a}_{j,u}$ are representatives of exactly $p^{n-1}(p-1)$ proper $R_{cp^n}$-ideal classes in $\mathrm{Cl}_M^-(cp^n)$ that project to proper ideal class of $\bar{\mathfrak{p}}^{-n}\mathfrak{A}_j$ in $\mathrm{Cl}_M^-(c)$. 

Finally, if $\mathfrak{b}$ is any proper $R_{cp^n}$-ideal prime to $p$ then after finding its proper ideal class representative $\mathfrak{a}_{j,u}$ in  $\mathrm{Cl}_M^-(cp^n)$, for a unique $j$ and $u$ as above, there is $\beta\in M^\times$ such that $\mathfrak{b}=\beta \mathfrak{a}_{j,u}$ and $\beta$ induces an isomorphism $\beta:\X{\mathfrak{a}_{j,u}}\to\X{\mathfrak{b}}$. Thus, we can define $\eta^{(p)}(\mathfrak{b})=\beta_{*}(\eta^{(p)}(\mathfrak{a}_{j,u}))=\beta^{-1}\eta^{(p)}(\mathfrak{a}_{j,u})$, $\ord{\mathfrak{b}}=\beta_{*}(\ord{\mathfrak{a}_{j,u}})=\beta^{-1}\ord{\mathfrak{a}_{j,u}}$ and $\et{\mathfrak{b}}=\beta_{*}(\et{\mathfrak{a}_{j,u}})=\beta^{-1}\et{\mathfrak{a}_{j,u}}$, as well as an invariant differential $\omega(\mathfrak{b})=(\beta^*)^{-1}\omega(\mathfrak{a}_{j,u})$ on $\X{\mathfrak{b}}$.

\subsection{Differential operators}\label{diffop}
Recall the definition of Maass--Shimura differential operators on $\mathfrak{H}$ indexed by $k\in\mathbb{Z}$:
\[ \delta_k = \frac{1}{2\pi\mathbf{i}}\left(\frac{\partial}{\partial z}+\frac{k}{z-\bar{z}}\right)\; \text{and}\; \delta_k^r=\delta_{k+2r-2}\ldots\delta_k \]
for a non-negative integer $r$. Their important property is that once applied to modular forms they preserve rationality of a value at a CM point (\cite{AAF} III and \cite{Sh75}). Let $\mathfrak{a}$ be a proper $R_{cp^n}$-ideal prime to $p$. We constructed corresponding CM point
\[x(\mathfrak{a})=\xw{\mathfrak{a}}{\mathcal{W}[\mu_{p^n}]}\]
on $Sh$ and an invariant differential $\omega(\mathfrak{a})$ on \Xw{\mathfrak{a}}{\mathcal{W}[\mu_{p^n}]}.
To precisely state this rationality result of Shimura, note that the complex uniformization $\X{\mathfrak{a}}(\mathbb{C})=\mathbb{C}/\mathfrak{a}$ induces a canonical invariant differential $\omega_\infty(\mathfrak{a})$ in $\Omega_{\X{\mathfrak{a}}/\mathbb{C}}$ by pulling back $\mathrm{d}u$, where $u$ is the standard variable on $\mathbb{C}$. Then one can define a period $\Omega_\infty\in\mathbb{C}^\times$ by $\omega(\mathfrak{a})=\Omega_\infty\omega_\infty(\mathfrak{a})$ (\cite{Ka} Lemma 5.1.45). Note that $\Omega_\infty$ does not depend on $\mathfrak{a}$ since $\omega(\mathfrak{a})$ is induced by $\omega(R)$ on \X{R} by construction. Then for $f\in G_k(N,\psi;\mathcal{W})$ we have
\[ \qquad \frac{\delta_k^rf(x(\mathfrak{a}),\omega_\infty(\mathfrak{a}))}{\Omega_\infty^{k+2r}}=\delta_k^rf(x(\mathfrak{a}),\omega(\mathfrak{a}))\in\mathcal{W}[\mu_{p^n}]\]
(\cite{Ka} Theorem 2.4.5).

Katz gave a purely algebro-geometric definition of Maass--Shimura differential operator (\cite{Ka} Chapter II) by interpreting it in terms of Gauss--Manin connection of the universal abelian variety with real multiplication over $\mathfrak{M}$. This allowed him to extend the operator $\delta_* $ to algebro-geometric and $p$-adic modular forms; we denote the latter extension of $\delta^r_*$ by $d^r:V(N;W)\to V(N;W)$. As explained at the end of Section \ref{p-adicmf}, the ordinary part of level structure at $p$, $\ord{\mathfrak{a}}:\mu_{p^\infty}\cong \X{\mathfrak{a}}[\mathfrak{p}^\infty]$ induces trivialization $\widehat{\mathbb{G}}_m\cong \widehat{\X{\mathfrak{a}}}$ for the $p$-adic formal completion $\widehat{\X{\mathfrak{a}}}_{/W[\mu_{p^n}]}$ of \X{\mathfrak{a}} along its zero-section. We obtain an invariant differential $\omega_p(\mathfrak{a})$ on $\widehat{\X{\mathfrak{a}}}_{/W[\mu_{p^n}]}$ by pushing forward $\frac{\mathrm{d}t}{t}$ on $\widehat{\mathbb{G}}_m$, which then extends to an invariant differential on \Xw{\mathfrak{a}}{W[\mu_{p^n}]} also denoted by $\omega_p(\mathfrak{a})$. Then one can define a period $\Omega_p\in W^\times$, independent of $\mathfrak{a}$, by $\omega(\mathfrak{a})=\Omega_p \omega_p(\mathfrak{a})$ (\cite{Ka} Lemma 5.1.47). The fact that will be of instrumental use for us is
\begin{equation}\label{K-S} \frac{(d^rf)(x(\mathfrak{a}),\omega_p(\mathfrak{a}))}{\Omega_p^{k+2r}}=(d^rf)(x(\mathfrak{a}),\omega(\mathfrak{a}))=(\delta_k^rf)(x(\mathfrak{a}),\omega(\mathfrak{a}))\in\mathcal{W}[\mu_{p^n}]
\end{equation}
(\cite{Ka} Theorem 2.6.7).  

\section{Hecke relation among CM points on Shimura curves}
\label{sec:hecke}
Let $S_k(\Gamma_0(N),\psi)$ denote the space of holomorphic cusp forms $f$ of level $\Gamma_0(N)$ and nebentypus $\psi$ with $f(\gamma(z))=\psi(\gamma)f(z)j(\gamma,z)$ for $\gamma\in \Gamma_0(N)$, where $j(\bigl(\begin{smallmatrix} a & b \\ c & d \end{smallmatrix} \bigr),z)=cz+d$ for $z\in\mathfrak{H}$ and $\bigl(\begin{smallmatrix} a & b \\ c & d \end{smallmatrix} \bigr)\in G(\mathbb{R})$. We may regard Dirichlet character $\psi$ as a character of $\widehat{\Gamma}_0(N)$ via $\bigl(\begin{smallmatrix} a & b \\ c & d \end{smallmatrix} \bigr)\mapsto \psi(d)$. Then by strong approximation theorem $G(\mathbb{A})=G(\mathbb{Q})\widehat{\Gamma}_0(N)\mathrm{GL}_2^+(\mathbb{R})$ ($\mathrm{GL}_2^+(\mathbb{R})=\{ g \in G(\mathbb{R})|\mathrm{det}(g)>0\}$) and we can lift $f$ to $\mathbf{f}:G(\mathbb{Q})\backslash G(\mathbb{A})\to \mathbb{C}$ by $\mathbf{f}(\alpha u g_\infty)=f(g_\infty(\mathbf{i}))\psi(u)j(g_\infty,\mathbf{i})^{-k}$ for $\alpha\in G(\mathbb{Q})$, $u\in \widehat{\Gamma}_0(N)$ and $g_\infty \in \mathrm{GL}_2^+(\mathbb{R})$. Note that $\mathbf{f}(\alpha g u)=\psi(u)\mathbf{f}(g)$ for $\alpha\in G(\mathbb{Q})$ and $u\in \widehat{\Gamma}_0(N)$, and that $\mathbf{f}$ is so-called arithmetic lift of $f$ (as opposed to automorphic lift involving the determinant factor which we omitted). If we denote by $\mathcal{S}_k(\widehat{\Gamma}_0(N),\psi)$ the space of adelic cusp forms $\mathbf{f}$ obtained from $f\in S_k(\Gamma_0(N),\psi)$ in described way then $S_k(\Gamma_0(N),\psi)\cong\mathcal{S}_k(\widehat{\Gamma}_0(N),\psi)$ via $f\leftrightarrow\mathbf{f}$. Note that the center $Z(\mathbb{A})$ acts on $\mathcal{S}_k(\Gamma_0(N),\psi)$ via $\mathbf{f}|\zeta(g)=\mathbf{f}(\zeta g)$ and that $\mathbf{f}|\zeta_\infty=\zeta_\infty^{-k}\mathbf{f}$ for $\zeta\in Z(\mathbb{A})$. Thus $\mathcal{S}_k(\Gamma_0(N),\psi)$ decomposes into the direct sum of eigenspaces for this action and on each eigenspace $Z(\mathbb{A})$ acts by a Hecke character whose restriction to $\widehat{\Gamma}_0(N)\cap Z(\mathbb{A})$ is $\psi$ and which sends $\zeta_\infty$ to $\zeta_\infty^{-k}$. If we lift $\psi$ to $\mathbb{A}^\times$ in standard way and set $\boldsymbol{\psi}:=\psi |\cdot|_{\mathbb{A}}^{-k}$, let $\mathcal{S}_k(N,\boldsymbol{\psi})$ denote the $\boldsymbol{\psi}$-eigenspace. Then $S_k(\Gamma_0(N),\psi)\cong \mathcal{S}_k(N,\boldsymbol{\psi})$ via $f\leftrightarrow\mathbf{f}$.

Following closely Section 3.1 of \cite{HidaNV} we adelize Maass--Shimura $m$-th derivative $\delta_k^mf$, $m\geq 0$, to a function $\mathbf{f}_m$ on 
$G(\mathbb{A})$ in the following way. Regarding $X=\frac{1}{2}\bigl(\begin{smallmatrix} 1 & \mathbf{i} \\ \mathbf{i} & -1 \end{smallmatrix} \bigr)\in\mathfrak{sl}_2(\mathbb{C})$ -- a Lie algebra of $\mathrm{SL}_2(\mathbb{C})$, as an invariant differential operator $X_{g_\infty}$ on $\mathrm{SL}_2(\mathbb{C})$ for the variable matrix $g_\infty\in G(\mathbb{R})$ (here identifying $G(\mathbb{R})$ with $\mathrm{SL}_2(\mathbb{R})\times\mathbb{R}^\times$ by the natural isogeny), we set
\[ \mathbf{f}_m(g)=(-4\pi)^{-m}|\mathrm{det}(g)|_\mathbb{A}^{-m}X^m_{g_\infty}\mathbf{f}(g) \]
where $g_\infty$ is infinite part of $g\in G(\mathbb{A})$. Then $\mathbf{f}_m(g_\infty)=(\delta_k^mf)(g_\infty(\mathbf{i}))j(g_\infty,\mathbf{i})^{-k-2m}$, and 
when $\mathrm{det}(g_{\infty})=1$ we have
\begin{equation}\label{der}
\mathbf{f}_m(g)=|\mathrm{det}(g^{(\infty)})|_\mathbb{A}^{-m} \delta_k^m\mathbf{f}(g)
\end{equation}
where $\delta_k^m\mathbf{f}:G(\mathbb{Q})\backslash G(\mathbb{A})\to \mathbb{C}$ is the arithmetic lift of $\delta_k^mf$ as above, given by $\delta_k^m\mathbf{f}(\alpha u g_\infty)=\delta_k^mf(g_\infty(\mathbf{i}))\psi(u)j(g_\infty,\mathbf{i})^{-k-2m}$ for $\alpha\in G(\mathbb{Q})$, $u\in \widehat{\Gamma}_0(N)$ and $g_\infty \in \mathrm{GL}_2^+(\mathbb{R})$ (\cite{HidaNV} Definition 3.3 and Lemma 3.1). Here $g^{(\infty)}$ is finite part of $g\in G(\mathbb{A})$. The central character of $\mathbf{f}_m$ is given by $\boldsymbol{\psi}_m(x)=\boldsymbol{\psi}(x)|x|_{\mathbb{A}}^{-2m}$ and $\mathbf{f}_m(gu)=\boldsymbol{\psi}_m(u)\mathbf{f}_m(g)$ when $u\in \widehat{\Gamma}_0(N)$.

Let $f_0\in S_k(\Gamma_0(N),\psi)$ be a normalized Hecke newform of conductor $N_0$, nebentypus $\psi$ and let  $\mathbf{f}_0\in\mathcal{S}_k(N,\boldsymbol{\psi})$ be the corresponding adelic form with central character $\boldsymbol{\psi}$. Let $f$ be a suitable normalized Hecke eigen-cusp form that will be explicitly made out of $f_0$ in Section \ref{sec:mainthm} such that its arithmetic lift $\mathbf{f}$ is in the automorphic representation $\pi_{\mathbf{f}_0}$ generated by the unitarization $\mathbf{f}_0^u$.

Fix a choice of $z_1\in R$ such that $R=\mathbb{Z}+\mathbb{Z}z_1$ and define $\rho:M\hookrightarrow \mathrm{M}_2(\mathbb{Q})$ by a regular representation
\[\rho(\alpha)\begin{pmatrix} z_1 \\ 1 \end{pmatrix}=\begin{pmatrix} \alpha z_1 \\ \alpha \end{pmatrix}\; . \]
After tensoring with $\mathbb{A}$ we get $\rho:{M^{\times} \backslash M^{\times}_{\mathbb{A}}}  \hookrightarrow G(\mathbb{Q})\backslash G(\mathbb{A})$. We fix $g_1\in G(\mathbb{A})$ such that $g_{1,\infty}(\mathbf{i})=z_1$ and $\mathrm{det}(g_{1,\infty})=1$ while the finite places of $g_1$ will be specified shortly.
\begin{factoring}\label{factoring}
Let $\chi_m:{M^{\times} \backslash M^{\times}_{\mathbb{A}}}\to\mathbb{C}^\times$ be a Hecke character with $\chi_m|_{\mathbb{A}^\times} = \boldsymbol{\psi}_m^{-1}$ and $\chi_m(a_\infty)=a_\infty^{k+2m}$. Then $a\mapsto  \mathbf{f}_m(\rho(a)g_1) \chi_m(a)$ factors through $\mathrm{Cl}_M^-= M^\times \left\backslash M^{\times}_{\mathbb{A}}\right/(\mathbb{A}^{(\infty)})^\times M^\times_\infty$ (the anticyclotomic idele class group).
\end{factoring}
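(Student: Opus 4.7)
The plan is to verify that $F(a):=\mathbf{f}_m(\rho(a)g_1)\chi_m(a)$ is invariant under left multiplication by $M^\times$ and under right multiplication by each of $(\mathbb{A}^{(\infty)})^\times$ and $M_\infty^\times$. Left $M^\times$-invariance is immediate: for $\alpha\in M^\times$, $\rho(\alpha)\in G(\mathbb{Q})$, so $\mathbf{f}_m$ is unchanged by its left $G(\mathbb{Q})$-invariance, while $\chi_m|_{M^\times}=1$ since $\chi_m$ is a Hecke character. For $z\in(\mathbb{A}^{(\infty)})^\times$ (embedded with trivial archimedean component), $\rho(z)=zI_2$ is central in $G(\mathbb{A})$, so the central character relation $\mathbf{f}_m(g\rho(z))=\boldsymbol{\psi}_m(z)\mathbf{f}_m(g)$ combines with $\chi_m|_{\mathbb{A}^\times}=\boldsymbol{\psi}_m^{-1}$ to give $F(az)=F(a)$.

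The core of the argument is invariance under $z_\infty\in M_\infty^\times$. A direct matrix computation (using $R=\mathbb{Z}+\mathbb{Z}z_1$) shows that $\rho(z_\infty)$ stabilizes $z_1\in\mathfrak{H}$, with $j(\rho(z_\infty),z_1)=z_\infty$ and $\mathrm{det}(\rho(z_\infty))=N(z_\infty)=|z_\infty|^2$ under the CM-type identification $M_\infty\cong\mathbb{C}$. Since $g_{1,\infty}$ is chosen so that $g_{1,\infty}(\mathbf{i})=z_1$ and $\mathrm{det}(g_{1,\infty})=1$, the conjugate
\[\tilde k:=g_{1,\infty}^{-1}\rho(z_\infty)g_{1,\infty}\]
lies in $\mathrm{Stab}_{G^+(\mathbb{R})}(\mathbf{i})=\mathbb{R}^\times_+\cdot\mathrm{SO}(2)$; the $j$-cocycle identity $j(gh,\tau)=j(g,h(\tau))j(h,\tau)$ yields $j(\tilde k,\mathbf{i})=z_\infty$ and $\mathrm{det}(\tilde k)=|z_\infty|^2$. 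Extending $\tilde k$ trivially to finite places, the relation $\rho(z_\infty)g_{1,\infty}=g_{1,\infty}\tilde k$ gives the adelic equality $\rho(az_\infty)g_1=\rho(a)g_1\cdot\tilde k$.

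Writing $\tilde k=|z_\infty|I_2\cdot k_0$ with $k_0\in\mathrm{SO}(2)$ and $j(k_0,\mathbf{i})=z_\infty/|z_\infty|$, I combine the central-character evaluation $\boldsymbol{\psi}_m(|z_\infty|I_2)=|z_\infty|^{-(k+2m)}$ (since $\boldsymbol{\psi}_m=\psi\,|\cdot|_\mathbb{A}^{-k-2m}$ is trivial on $\mathbb{R}^\times_+$) with the weight-$(k+2m)$ rotation formula $\mathbf{f}_m(gk_0)=j(k_0,\mathbf{i})^{-(k+2m)}\mathbf{f}_m(g)$, which follows from $\mathbf{f}_m(g_\infty)=(\delta_k^mf)(g_\infty(\mathbf{i}))j(g_\infty,\mathbf{i})^{-k-2m}$ together with $k_0(\mathbf{i})=\mathbf{i}$. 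These combine to give
\[\mathbf{f}_m(\rho(az_\infty)g_1)=z_\infty^{-(k+2m)}\mathbf{f}_m(\rho(a)g_1),\]
which the hypothesis $\chi_m(z_\infty)=z_\infty^{k+2m}$ cancels exactly, yielding $F(az_\infty)=F(a)$. The delicate point is this final cancellation: the scalar and rotational contributions from $\tilde k$ must recombine into $z_\infty^{-(k+2m)}$, which is precisely what the archimedean condition $\chi_m(a_\infty)=a_\infty^{k+2m}$ is designed to achieve.
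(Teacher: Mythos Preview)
Your proof is correct. The paper itself does not give an argument here at all: its entire proof is the one-line citation ``This is Lemma~3.7 of \cite{HidaNV}.'' You have supplied a direct verification of the three required invariances (under $M^\times$, under finite central ideles, and under $M_\infty^\times$), which is exactly the content of the cited lemma. Your archimedean computation---identifying $g_{1,\infty}^{-1}\rho(z_\infty)g_{1,\infty}$ with a positive scalar times a rotation and combining the central-character contribution $|z_\infty|^{-(k+2m)}$ with the weight-$(k+2m)$ behavior under $\mathrm{SO}(2)$ to recover $z_\infty^{-(k+2m)}$---is the standard way to see this, and it matches what one finds in Hida's paper. In short, you have reproduced the proof that the paper outsources.
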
 
\begin{proof}
This is Lemma 3.7 of \cite{HidaNV}.
\end{proof}

We denote by $\mathrm{Cl}_M=M^\times \left\backslash M^{\times}_{\mathbb{A}}\right/ M^\times_\infty$ the idele class group and set
\[L_{\chi_m}(\mathbf{f}_m):=\int_{\mathrm{Cl}_M}  \mathbf{f}_m(\rho(a)g_1) \chi_m(a)d^{\times}a  \]
for a choice of Haar measure $d^{\times}a$ on $M^{\times}_{\mathbb{A}}/ M^\times_\infty$ normalized so that $\int_{\widehat{R}^\times} d^{\times}a=1$. This is the normalization used by Hida in Section 2.1 of \cite{HidaNV}. We also take a fundamental domain $\Phi\subset M^{\times}_{\mathbb{A}}/ M^\times_\infty$ of $\mathrm{Cl}_M$ to get a measure on $\mathrm{Cl}_M$ still denoted by $d^{\times}a$. Thus $\int_{\widehat{R}^\times/R^\times} d^{\times}a=1/|R^\times|$ and choosing a complete set of representatives $\{b_1,\ldots,b_{h(M)}\}\subset M^{\times}_{\mathbb{A}}$ so that $M^{\times}_{\mathbb{A}}=\bigsqcup_{j=1}^{h(M)}M^\times b_j(\mathbb{A}^{(\infty)})^\times M_\infty^\times$ we conclude $\mathop{vol}(\mathrm{Cl}_M)=\int_{\mathrm{Cl}_M} d^{\times}a=h(M)/|R^\times|$.

In the main Theorem 4.1 of \cite{HidaNV} Hida computed $L_{\chi_m}(\mathbf{f}_m)^2$ by the Rankin--Selberg convolution method starting from principle used by Waldspurger in \cite{Wa} that the orthogonal similitude group $\mathrm{GO}_D$ of the norm for a quaternion algebra $D$, in this case $D:=\mathrm{M}_2(\mathbb{Q})$, is nearly the same as $D^\times \times D^\times$, and Shimizu's theta lift for this orthogonal group realizes the Jacquet-Langlands correspondence. However the heart of the matter in his computation is a delicate choice of a Schwartz-Bruhat function on $D_{\mathbb{A}}$ attaining this optimality of theta correspondence (Sections 1.4 and 1.7 of \cite{HidaNV}). This careful choice is motivated by the explicit computation of the $q$-expansion of the theta lift of $\mathbf{f}$ to $\mathrm{GO}_D(\mathbb{A})$ via ``partial Fourier transform'' of the Siegel-Weil theta series that was performed in Hida's proof of anticyclotomic Main Conjecture for CM fields in \cite{HidaCoates}. It is followed by a choice of $g_1$ at finite places, yet another subtle maneuver playing the role in the splitting of the quaternionic theta series into a product of theta series of $M$, which in turn comes from splitting the quadratic space $(D,\mathrm{det})=(M,N_{M/\mathbb{Q}})\oplus(M,-N_{M/\mathbb{Q}})$ for the norm form $N_{M/\mathbb{Q}}$ (\cite{HidaNV} Section 2.2). 

Let $N_0=\prod_ll^{\nu(l)}$ be the prime factorization and let $N_{ns}=\prod_{l \, \text{non-split}}l^{\nu(l)}$ be its ``non-split'' part. 
Let $\mathfrak{C}$ denote the conductor of $\chi_m$ as above. For the rest of the paper we assume that $\chi_m$ is unramified outside $N_0$ and $p$ and that its conductor at non-split primes $l|N_{ns}$ is precisely $l^{\nu(l)}$, which we write $\mathfrak{C}_{N_{ns}}=N_{ns}$. As far as ramification at $p$ is concerned, we distinguish the following two cases:
\begin{enumerate}
\item[1)] $\mathfrak{C}_p=p^s$ for some $s \geq \mathrm{max}(1,\mathrm{ord}_p(N_0))$.
\item[2)] $\mathfrak{C}_p=1$ 
\end{enumerate}
We refer to the first one as $p$-ramified case and to the second one as $p$-unramified case. 
To specify $g_1$ at finite places we introduce some notation following closely Section 4 of \cite{HidaNV}. 
We denote by $d(M)\in \mathbb{Z}$ ($d(M)<0$) the discriminant of $M$ and set $d_0(M)=|d(M)|/4$ if $4|d(M)$ while $d_0(M)=|d(M)|$ otherwise. 
We divide the set of prime factors of $N(\mathfrak{C})d_0(M)N_0$ into disjoint union $A\sqcup C$ as follows. If we are in the $p$-ramified case we set $A=\{p\}$, otherwise we set $A=\emptyset$. Set $C=C_0\sqcup C_1$ where $C_1$ is the set of prime factors of $d_0(M)$ and $C_0=C_i\sqcup C_{sp}\sqcup C_r$ so that $C_i$ consists of primes inert in $M$, $C_r=\{2\}$ if $\mathrm{ord}_2(d(M))=2$ with $\nu(2)>2$ and $C_r=\emptyset$ otherwise. Then $C_{sp}$ consists of primes split in $M$ that are not already placed in $A$. Thus, we have three possibilities for prime $p$: in the $p$-ramified case it is placed in set $A$, whereas in the $p$-unramified case it is placed in set $C_{sp}$ when $p|N_0$ or is completely out of this consideration when $p\nmid N_0$.

We already made a choice of a prime $\bar{\mathfrak{p}}$ over $p$ in $M$; in the $p$-ramified case we set $\mathcal{A}=\{\bar{\mathfrak{p}}\}$, and we choose a prime $\bar{\mathfrak{l}}$ over each $l\in C_{sp}$, denoting the set of all these choices by $\mathcal{C}_{sp}=\{\bar{\mathfrak{l}}|l\in C_{sp}\}$. These choices allow us to identify $M_l=M_{\bar{\mathfrak{l}}}\times M_{\mathfrak{l}}=\mathbb{Q}_l\times\mathbb{Q}_l$ for all $l\in A\sqcup C_{sp}$. (Note that these identifications follow reversed notation from the ones in \cite{HidaNV} due to a reason explained in Remark \ref{switch} at prime $p$ -- we proceed similarly at other split primes to keep our notation uniform.) If $\iota_l$ and $c\circ\iota_l$ are projections of $M_l$ to $M_{\mathfrak{l}}$ and $M_{\bar{\mathfrak{l}}}$, respectively, we can write $\iota_l(\alpha)=\alpha$ and $c\circ\iota_l(\alpha)=\bar{\alpha}$. For  $l\in A\sqcup C_{sp}$ we specify $g_{1,l}$  by first choosing $h_{1,l}\in G(\mathbb{Z}_l)$ so that $h_{1,l}^{-1}\rho(\alpha)h_{1,l}=\bigl(\begin{smallmatrix} \bar{\alpha}  & 0 \\ 0 & \alpha \end{smallmatrix} \bigr)$; for example $h_{1,l}=\bigl(\begin{smallmatrix}  \bar{z}_1 & z_1 \\ 1 & 1 \end{smallmatrix} \bigr)$ will do, and then setting:
\[g_{1,p}=h_{1,p}\bigl(\begin{smallmatrix} p^s & 1 \\ 0 & 1 \end{smallmatrix} \bigr) \text{ if an only if } p\in A \text{ (i.e. the $p$-ramified case only),}\]
\[g_{1,l}=h_{1,l}\bigl(\begin{smallmatrix} l^{\nu(l)} & 0 \\ 0 & 1 \end{smallmatrix} \bigr)\text{ for } l\in C_{sp} \; .\]
Unless $l=2$ is inert in $M$, we set
\[g_{1,l}=\bigl(\begin{smallmatrix} l^{\nu(l)} & 0 \\ 0 & 1 \end{smallmatrix} \bigr)\; \text{for}\; l\in C_i\sqcup C_r \sqcup C_1 \; .\]
If exceptionally, $2\in C$ and 2 is inert in $M$, the appropriate choice of $g_{1,l}$ for $l=2$ is given in Lemma 2.5 of \cite{HidaNV}. We chose $g_{1,\infty}\in G(\mathbb{R})$ so that $g_{1,\infty}(\mathbf{i})=z_1$ and $\mathrm{det}(g_{1,\infty})=1$. 
We set $g_{1,l}$ to be the identity matrix in $G(\mathbb{Z}_l)$ for $l\not\in A\sqcup C \sqcup\{\infty\}$ (see the proof of Proposition 2.2 in \cite{HidaNV}).

From now on until the proof of Main Theorem at the end of paper we actually assume that we are in the $p$-ramified case. This is technically more demanding case and the $p$-unramified case will follow as side product of its consideration. To summarize, we are working under assumption
\begin{equation}\label{dominate}
\mathfrak{C}_{N_{ns}p}= N_{ns}p^s \text{ for some } s\geq \mathrm{max}(1,\mathrm{ord}_p(N_0)) \; .
\end{equation}
Now we are able to make a slight improvement of Lemma \ref{factoring} above:
\begin{improv}\label{improv}
Let $\chi_m:{M^{\times} \backslash M^{\times}_{\mathbb{A}}}\to\mathbb{C}^\times$ be a Hecke character such that its conductor $\mathfrak{C}$ satisfies $\mathfrak{C}_{N_{ns}p}= N_{ns}p^s$ and $\chi_m|_{\mathbb{A}^\times} = \boldsymbol{\psi}_m^{-1}$ and $\chi_m(a_\infty)=a_\infty^{k+2m}$. Then $a\mapsto \mathbf{f}_m(\rho(a)g_1) \chi_m(a)$ factors through $\mathrm{Cl}_M^-(N_{ns}p^s)= M^\times \left\backslash M^{\times}_{\mathbb{A}}\right/(\mathbb{A}^{(\infty)})^\times\widehat{R}_{N_{ns}p^s}^\times M^\times_\infty$ .
\end{improv}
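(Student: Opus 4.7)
The plan is to strengthen the proof of Lemma \ref{factoring} by establishing the additional right-invariance of $a\mapsto \mathbf{f}_m(\rho(a)g_1)\chi_m(a)$ under $\widehat{R}_{N_{ns}p^s}^{\times}$. Since $\rho$ is a ring homomorphism and $\chi_m$ a character, for each $u\in\widehat{R}_{N_{ns}p^s}^{\times}$ this reduces to the identity
\[
\mathbf{f}_m(\rho(a)\rho(u)g_1) = \chi_m(u)^{-1}\,\mathbf{f}_m(\rho(a)g_1).
\]
The forward step is to produce $\gamma_u := g_1^{-1}\rho(u)g_1\in\widehat{\Gamma}_0(N)$ whose bottom-right entry $d_{\gamma_u}$ satisfies $\boldsymbol{\psi}_m(d_{\gamma_u})=\chi_m(u)^{-1}$; the transformation law $\mathbf{f}_m(g\gamma)=\boldsymbol{\psi}_m(d_\gamma)\mathbf{f}_m(g)$ for $\gamma\in\widehat{\Gamma}_0(N)$ then closes the argument.

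The verification is local. At places $l\nmid N_0 p\,d_0(M)$, $g_{1,l}$ is the identity and both $\chi_{m,l}$ and $\boldsymbol{\psi}_{m,l}$ are unramified on $R_l^{\times}$, so the identity is automatic. At a non-split prime $l\mid N_{ns}$, I would write an element $u_l\in R_{l^{\nu(l)},l}^{\times}$ as $u_l = x + l^{\nu(l)}\beta$ with $x\in\mathbb{Z}_l^{\times}$ and $\beta\in R_l$, compute $\rho(u_l)$ from the regular representation using the relation $z_1^2 = e + fz_1$, and conjugate by $g_{1,l}=\bigl(\begin{smallmatrix} l^{\nu(l)} & 0 \\ 0 & 1 \end{smallmatrix}\bigr)$. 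The outcome is an element of $\widehat{\Gamma}_0(N)_l$ whose bottom-right is congruent to $x$ modulo $l^{\nu(l)}$. Because $\mathfrak{C}_{N_{ns}}=N_{ns}$, $\chi_{m,l}$ is trivial on $1+l^{\nu(l)}R_l$, so $\chi_m(u_l)=\chi_{m,l}(x)$; combined with $\chi_m|_{\mathbb{A}^{\times}}=\boldsymbol{\psi}_m^{-1}$ this gives the character matching.

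The delicate case is the split prime $p$. Under the identification $R_p = R_{\bar{\mathfrak{p}}}\oplus R_{\mathfrak{p}}$, the group $R_{p^s,p}^{\times}$ consists precisely of pairs $(\bar{u}_p,u_p)\in(\mathbb{Z}_p^{\times})^2$ with $\bar{u}_p\equiv u_p\pmod{p^s}$. Using $g_{1,p} = h_{1,p}\bigl(\begin{smallmatrix} p^s & 1 \\ 0 & 1 \end{smallmatrix}\bigr)$ together with $h_{1,p}^{-1}\rho(u_p)h_{1,p} = \bigl(\begin{smallmatrix} \bar{u}_p & 0 \\ 0 & u_p \end{smallmatrix}\bigr)$, a direct multiplication gives
\[
\gamma_{u,p} = \begin{pmatrix} \bar{u}_p & p^{-s}(\bar{u}_p - u_p) \\ 0 & u_p \end{pmatrix},
\]
and the congruence $\bar{u}_p\equiv u_p\pmod{p^s}$ is exactly what makes the off-diagonal entry $p$-integral, placing $\gamma_{u,p}$ in $\widehat{\Gamma}_0(p^{\nu(p)})_p$. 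Since the conductor hypothesis $\mathfrak{C}_p = p^s$ guarantees that both $\chi_{m,\mathfrak{p}}$ and $\chi_{m,\bar{\mathfrak{p}}}$ are trivial on $1+p^s\mathbb{Z}_p$, one may replace $\bar{u}_p$ by $u_p$ inside $\chi_m((\bar{u}_p,u_p))$, after which the required identity collapses to the central-character compatibility $\chi_m|_{\mathbb{A}^{\times}}=\boldsymbol{\psi}_m^{-1}$.

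The main obstacle I foresee is precisely this interlock at $p$: the specific shape of $g_{1,p}$ forced by Hida's optimality, the fine structure of $R_{p^s,p}^{\times}$ encoded in the congruence $\bar{u}_p\equiv u_p\pmod{p^s}$, and the conductor hypothesis $\mathfrak{C}_p=p^s$ must all align to produce an integral $\gamma_{u,p}\in\widehat{\Gamma}_0(N)_p$ on which $\boldsymbol{\psi}_m$ and $\chi_m^{-1}$ agree. The remaining split primes in $C_{sp}$ and the ramified primes in $C_1\sqcup C_r$ are then handled by analogous but simpler local computations dictated by the prescribed $g_{1,l}$, completing the descent from $\mathrm{Cl}_M^-$ to $\mathrm{Cl}_M^-(N_{ns}p^s)$.
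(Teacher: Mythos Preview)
Your approach is correct and follows essentially the same local-computation strategy as the paper. The one difference worth noting is at the prime $p$: the paper first observes that $\widehat{R}_{p^s}^\times = \widehat{\mathbb{Z}}^\times \cdot U_{p^s}$ with $U_{p^s}=\{\alpha\in\widehat{R}^\times \mid \alpha\equiv 1 \bmod p^s\}$, and since invariance under $\widehat{\mathbb{Z}}^\times\subset(\mathbb{A}^{(\infty)})^\times$ is already supplied by Lemma~\ref{factoring}, it suffices to check $\alpha\in U_{p^s}$. For such $\alpha$ the conjugate $g_{1,p}^{-1}\rho(\alpha)g_{1,p}$ lands in $\widehat{\Gamma}_1(p^s)$ and $\chi_m(\alpha)=1$ directly, so no character matching is needed. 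Your version instead treats a general pair $(\bar u_p,u_p)$ with $\bar u_p\equiv u_p\bmod p^s$ and invokes the conductor hypothesis $\mathfrak{C}_p=p^s$ together with $\chi_m|_{\mathbb{A}^\times}=\boldsymbol{\psi}_m^{-1}$ to match $\chi_m((\bar u_p,u_p))$ with $\boldsymbol{\psi}_m(u_p)^{-1}$. Both arguments are valid; the paper's is slightly more economical, while yours makes the role of the conductor condition at $p$ more transparent. Your remark that the split primes in $C_{sp}$ and the primes in $C_1\sqcup C_r$ also require (easy) local checks is accurate; the paper suppresses these, presumably because they are implicit in the proof of Lemma~\ref{factoring} in \cite{HidaNV}.
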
 
\begin{proof}
We know from Lemma \ref{factoring} that $a\mapsto  \mathbf{f}_m(\rho(a)g_1) \chi_m(a)$ factors through $\mathrm{Cl}_M^-= M^\times \left\backslash M^{\times}_{\mathbb{A}}\right/(\mathbb{A}^{(\infty)})^\times M^\times_\infty$. Note that $\widehat{R}_{p^s}^\times=\{\alpha\in\widehat{R}^\times|\,\alpha\equiv\bar{\alpha}\,\mathrm{mod}\, p^s\}$ or in other words $\widehat{R}_{p^s}^\times=(\mathbb{A}^{(\infty)})^\times U_{p^s}$ for $U_{p^s}=\{\alpha\in\widehat{R}^\times|\,\alpha\equiv 1 \, \mathrm{mod}\, p^s\}$.
We have 
\[g_{1,p}^{-1}\rho(\alpha)g_{1,p}=\bigl(\begin{smallmatrix} p^s & 1 \\ 0 & 1 \end{smallmatrix} \bigr)^{-1}\bigl(\begin{smallmatrix} \bar{\alpha} & 0 \\ 0 & \alpha  \end{smallmatrix} \bigr)\bigl(\begin{smallmatrix} p^s & 1 \\ 0 & 1 \end{smallmatrix} \bigr)=\bigl(\begin{smallmatrix} \bar{\alpha} & \frac{\bar{\alpha}-\alpha}{p^s} \\ 0 & \alpha \end{smallmatrix} \bigr)\in \widehat{\Gamma}_1(p^s)\;\text{for}\;\alpha\in U_{p^s} \;.\]
It follows that $\mathbf{f}_m(\rho(\alpha)g_1)=\mathbf{f}_m(g_1(g_1^{-1}\rho(\alpha)g_1))=\mathbf{f}_m(g_1)$ while $\chi_m(\alpha)=1$ due to the fact that conductor of $\chi_m$ at $p$ is precisely $p^s$. Thus $\mathbf{f}_m(\rho(\alpha)g_1)\chi_m(\alpha)=\mathbf{f}_m(g_1)$.

Note that when $l|N_{ns}$ is ramified or inert one can suppose that $M_l=\mathbb{Q}_l[\sqrt{d_0}]$ with $R\otimes_{\mathbb{Z}}\mathbb{Z}_l= \mathbb{Z}_l[\sqrt{d_0}]$ being $l$-adic integer ring of $M_l$. Taking $z_1=1/\sqrt{d_0}$ we can realize $\rho(x+y\sqrt{d_0})=\bigl(\begin{smallmatrix} x & y \\ d_0y & x \end{smallmatrix} \bigr)$. Hence for $\alpha= x+y\sqrt{d_0}\in (R_{l^{\nu(l)}}\otimes_{\mathbb{Z}}\mathbb{Z}_l)^\times$ we have $l^{-\nu(l)}y\in \mathbb{Z}_l$ and consequently
\[g_{1,l}^{-1}\rho(x+y\sqrt{d_0})g_{1,l}=\bigl(\begin{smallmatrix} l^{\nu(l)} & 0 \\ 0 & 1 \end{smallmatrix} \bigr)^{-1}\bigl(\begin{smallmatrix} x & y \\ d_0y & x \end{smallmatrix} \bigr)\bigl(\begin{smallmatrix} l^{\nu(l)} & 0 \\ 0 & 1 \end{smallmatrix} \bigr)=\bigl(\begin{smallmatrix} x & l^{-\nu(l)}y \\ l^{\nu(l)}d_0y & x \end{smallmatrix} \bigr)\in \widehat{\Gamma}_0(l^{\nu(l)}) \; \text{for}\; l|N_{ns}\;.\]
It follows that $\mathbf{f}_m(\rho(\alpha)g_1)=\mathbf{f}_m(g_1(g_1^{-1}\rho(\alpha)g_1))=\mathbf{f}_m(g_1)\boldsymbol{\psi}_m(x)$ if $\alpha= x+y\sqrt{d_0}\equiv x\,\mod\, l^{\nu(l)}$. On the other hand, using that the conductor of $\chi_m$ at $l$ is precisely $l^{\nu(l)}$ we have $\chi_m(\alpha)=\chi_m(x)=\boldsymbol{\psi}_m^{-1}(x)$ due to the assumption $\chi_m|_{\mathbb{A}^\times} = \boldsymbol{\psi}_m^{-1}$. Thus we conclude $\mathbf{f}_m(\rho(\alpha)g_1)\chi_m(\alpha)=\mathbf{f}_m(g_1)\boldsymbol{\psi}_m(x)\chi_m(\alpha)=\mathbf{f}_m(g_1)$ as desired.
\end{proof}
Using Lemma \ref{improv} we immediately conclude that
\begin{equation} \label{int-sum}
L_{\chi_m}(\mathbf{f}_m) =\frac{\mathop{vol}(\mathrm{Cl}_M)}{|\mathrm{Cl}_M^-(N_{ns}p^s)|}\sum_{j=1}^{|\mathrm{Cl}_M^-(N_{ns}p^s)|}\chi_m(a_j)\mathbf{f}_m(\rho(a_j)g_1)
= \frac{\varphi_{\mathbb{Q}}(N_{ns}p^s)}{2\varphi_M(N_{ns}p^s)}\sum_{j=1}^{h^-}\chi_m(a_j)\mathbf{f}_m(\rho(a_j)g_1) 
\end{equation}
where we set $h^-=h^-(N_{ns}p^s):=|\mathrm{Cl}_M^-(N_{ns}p^s)|$ and $\{a_1,\ldots,a_{h^-}\}\subset M^{\times}_{\mathbb{A}}$ is a complete representative set so that $M^{\times}_{\mathbb{A}}=\bigsqcup_{j=1}^{h^-}M^\times a_j\widehat{R}_{N_{ns}p^s}^\times M_\infty^\times$. 

Our next goal is to give an algebro-geometric interpretation of $\mathbf{f}_m(\rho(a_j)g_1)$, for $j=1,\ldots,h^-$, by constructing CM points on Shimura curve $Sh$ such that values of $d^mf$ at these CM points coincide with $\mathbf{f}_m(\rho(a_j)g_1)$ after dividing both former and latter with suitable CM periods, that is, in the sense of (\ref{K-S}).

To this end, recall that $Sh(\mathbb{C})= G(\mathbb{Q})\left\backslash \left( \mathfrak{X}\times G(\mathbb{A}^{(\infty)})\right) \right/ Z(\mathbb{Q})$ and write $[z,g]\in Sh(\mathbb{C})$ for the image of $(z,g)\in \mathfrak{X}\times G(\mathbb{A}^{(\infty)})$. We restrict our attention to points $x=[z_1,g]$ where $z_1$ is already chosen generator of $R$ used to define regular representation $\rho=\rho_{z_1}:M^\times \hookrightarrow G(\mathbb{Q})$ by $\begin{pmatrix} \alpha z_1 \\ \alpha \end{pmatrix}=\rho_{z_1}(\alpha)\begin{pmatrix} z_1 \\ 1 \end{pmatrix}$.
Tensoring with $\mathbb{A}^{(\infty)}$ we may regard $\rho_{z_1}$ as a representation $\hat{\rho}_{z_1}:(M^{(\infty)}_{\mathbb{A}})^{\times}\hookrightarrow G(\mathbb{A}^{(\infty)})$ and further conjugating by $g$ we get $\hat{\rho}_{x}:(M^{(\infty)}_{\mathbb{A}})^{\times}\hookrightarrow G(\mathbb{A}^{(\infty)})$ given by $\hat{\rho}_{x}=g^{-1}\hat{\rho}_{z_1}(\alpha)g$. To each point $(E,\eta)\in Sh$ we can associate a lattice $\widehat{L}=\eta^{-1}(\mathcal{T}(E))\subset (\mathbb{A}^{(\infty)})^2$ and the level structure $\eta$ is determined by the choice of the basis $w=(w_1,w_2)$ of $\widehat{L}$ over $\widehat{\mathbb{Z}}$. In the view of basis $w$, the $G(\mathbb{A}^{(\infty)})$-action on $Sh$ given by $(E,\eta)\mapsto(E,\eta\circ g)$ is a matrix multiplication $w^\intercal\mapsto g^{-1}w^\intercal$ because $(\eta\circ g)^{-1}(\mathcal{T}(E))=g^{-1}\eta^{-1}(\mathcal{T}(E))=g^{-1}\widehat{L}$, where $\intercal$ stands for a transpose. 
\begin{switch} \label{switch} { \em
The action of matrix $g^{-1}$ records change of the basis vectors themselves, rather than coordinates with respect to the basis, as this is more natural in the modular point of view. Having this on mind and desiring to view modular forms in adelic, algebro-geometric and $p$-adic phrasing in coherent way, it becomes more convenient for us to use identifications $R\otimes_{\mathbb{Z}}\mathbb{Z}_p=R_{\bar{\mathfrak{p}}}\oplus  R_{\mathfrak{p}}$ and  $\X{R}[p^n]=\X{R}[\bar{\mathfrak{p}}^n]\oplus\X{R}[\mathfrak{p}^n]=\mathbb{Z}/p^n\mathbb{Z}\oplus\mu_{p^n}$, $n\geq 1$, in constructing level structures for our CM points due to the definition of nebentypus. }
\end{switch}
The fiber $E_x$ at $x\in Sh(\mathbb{C})$ of the universal abelian scheme over $Sh_{/\mathbb{Q}}$ has complex multiplication by an order of $M$, that is, under the action of $\widehat{R}$ via $\hat{\rho}_{x}$, $g^{-1}\widehat{R}\cap\mathbb{Q}^2$ is identified with a fractional ideal of an order of $M$ prime to $p$. The level structure $\eta_x=\eta_{z_1}\circ g$ identifies $\mathcal{T}(E_x)$ with $g^{-1}\widehat{R}$ where $\eta_{z_1}$ is a level structure arising from $\widehat{\mathbb{Z}}$-basis $(z_1,1)$ of $\widehat{R}$ so that $[z_1,1]\in  Sh(\mathbb{C})$ corresponds to $(\X{R},\eta_{z_1})$ while $x=[z_1,g]$ corresponds to $(E_x,\eta_x)=(\X{R},\eta_{z_1}\circ g)$ in the moduli interpretation of $Sh_{/\mathbb{Q}}$.

In particular, for the choice of $h_1\in G(\widehat{\mathbb{Z}})$ we made, one may assume that the point $[z_1,h_1]$ corresponds to CM point $x(R)=\xw{R}{\mathcal{W}}$ constructed in Section \ref{subsec:cm} since action of matrices in $G(\widehat{\mathbb{Z}})$ preserves $\widehat{R}$, that is, their only impact on the level structure $\eta_{z_1}$ is change of basis of $\widehat{R}$. It follows that the point  $[z_1,h_1\circ\bigl(\begin{smallmatrix} p^s & 1 \\ 0 & 1 \end{smallmatrix} \bigr)]$ corresponds to $\left(X(R),\eta^{(p)}(R),(\et{R}\times\ord{R})\circ\bigl(\begin{smallmatrix} p^s & 1 \\ 0 & 1 \end{smallmatrix} \bigr)\right)_{/\mathcal{W}}$ on $Sh_{/\mathbb{Q}}$ where we regard $\bigl(\begin{smallmatrix} p^s & 1 \\ 0 & 1 \end{smallmatrix} \bigr)$ as an element of $G(\mathbb{A}^{(\infty)})$ being trivial at places outside $p$. 

Recall that $\X{R}[p^s]=\X{R}[\bar{\mathfrak{p}}^s]\oplus\X{R}[\mathfrak{p}^s]=\mathbb{Z}/p^s\mathbb{Z}\oplus\mu_{p^s}$ over $\mathcal{W}$. Let $\zeta_{p^s}$ and $\gamma_{p^s}$ be the canonical generators of $\mu_{p^s}$ and $\mathbb{Z}/p^s\mathbb{Z}$, respectively, and consider rank $p^s$ finite flat subgroup scheme $C=\langle\zeta_{p^s}^{-1}\gamma_{p^s}\rangle$ of $\X{R}[p^s]$ defined over $\mathcal{W}[\mu_{p^s}]$. 
By repeating the argument of Section \ref{subsec:cm}, we get that $\X{R}/C=\X{\mathfrak{a}}$ where $\mathfrak{a}$ is a proper $R_{p^s}$-ideal such that $\mathfrak{a}R=\bar{\mathfrak{p}}^{-s}R$. The quotient map $\pi:\X{R}\twoheadrightarrow\X{R}/C$ is \'etale and we have a commutative diagram with exact rows
\[ \begin{CD}
  @.       @.             C                       @.   @.  \\
 @.       @.             @VVV                  @.   @.  \\
0 @>>>  \mu_{p^\infty} @>>> \X{R}[p^\infty] @>>>  \mathbb{Q}_p/\mathbb{Z}_p @>>> 0  \\
  @.      @|           @VV\pi V             @VVp^sV  \\
0 @>>>  \mu_{p^\infty} @>>> \X{\mathfrak{a}}[p^\infty] @>>>  \mathbb{Q}_p/\mathbb{Z}_p @>>> 0 
\end{CD} \]
and a resulting CM point
\[x(\mathfrak{a})=\xw{\mathfrak{a}}{\mathcal{W}[\mu_{p^s}]}\]
on $Sh$, equipped with an invariant differential $\omega(\mathfrak{a})$ on \X{\mathfrak{a}}. 

If $(w_1,w_2)$ is the $\widehat{\mathbb{Z}}$-basis of $\widehat{R}$ giving rise to the level structure $\eta(R)=\left(\eta^{(p)}(R),\et{R}\times\ord{R}\right)$ appearing in $x(R)$ then we can identify $\gamma_{p^s}(\mathbb{C})$ with $\frac{w_1}{p^s}$ and $\zeta_{p^s}(\mathbb{C})$ with $\frac{w_2}{p^s}$ thus identifying $C(\mathbb{C})$ with $\left(\widehat{\mathbb{Z}}\frac{w_1-w_2}{p^s}+\widehat{\mathbb{Z}}w_2 \right)\big/\left(\widehat{\mathbb{Z}}w_1+\widehat{\mathbb{Z}}w_2\right)$. It follows that making quotient $\pi:x(R)\to x(\mathfrak{a})$ is tantamount to moving $x(R)$ on $Sh$ by $G(\mathbb{A}^{(\infty)})$-action of $\bigl(\begin{smallmatrix} p^s & 1 \\ 0 & 1 \end{smallmatrix} \bigr)$. Indeed, the  $\widehat{\mathbb{Z}}$-basis $(w_1,w_2)$ of $\widehat{R}$ is being sent to $(\frac{w_1-w_2}{p^s},w_2)^\intercal=\bigl(\begin{smallmatrix} p^s & 1 \\ 0 & 1 \end{smallmatrix} \bigr)^{-1}(w_1,w_2)^\intercal$. More generally, making quotient of $\X{R}$ by a rank $p^s$ finite flat subgroup scheme $C_u=\langle\zeta_{p^s}^{-u}\gamma_{p^s}\rangle$ of \X{R}, for $1\leq u \leq p^s-1$ such that $\mathrm{gcd}(u,p)=1$, is tantamount to moving $x(R)$ on $Sh$ by $G(\mathbb{A}^{(\infty)})$-action of $\bigl(\begin{smallmatrix} p^s & u \\ 0 & 1 \end{smallmatrix} \bigr)$. This fact is nothing but dictionary between algebro-geometric and adelic phrasing of Hecke relation among CM points on Shimura curve $Sh$. What underlies such interpretation is Deligne's treatment of $Sh$ explained at the end of Section \ref{sec:shvar}.

Note that in the context of the above argument cases 1 and 2 in the Deligne's treatment of $Sh_{/\mathbb{Q}}$ amount to the same one because \'etale quotient of \Xw{R}{\mathcal{W}} by a rank $p^s$ finite flat subgroup scheme $C$ is at the same time \'etale covering of an elliptic curve isogenous to \Xw{R}{\mathcal{W}} via multiplication-by-$p^s$ map and thus both cases yield the same point on $Sh_{/\mathbb{Q}}$.

To summarize, $[z_1,h_1\bigl(\begin{smallmatrix} p^s & 1 \\ 0 & 1 \end{smallmatrix} \bigr)]$ corresponds to  $\left(X(R),\eta^{(p)}(R),(\et{R}\times\ord{R})\circ\bigl(\begin{smallmatrix} p^s & 1 \\ 0 & 1 \end{smallmatrix} \bigr)\right)$ on $Sh$ which in turn can be viewed as \xw{\mathfrak{a}}{\mathcal{W}[\mu_{p^s}]} for a certain proper  $R_{p^s}$-ideal $\mathfrak{a}$ such that $\mathfrak{a}R=\bar{\mathfrak{p}}^{-s}R$. 

Had we allowed a choice of subgroup scheme $C=\X{R}[\bar{\mathfrak{p}}^s]$ in the consideration above, we would obtain $\X{R}/C=\X{\bar{\mathfrak{p}}^{-s}R}$ whose corresponding CM point $x(\bar{\mathfrak{p}}^{-s}R)$ on $Sh$ is related to $x(R)$ via $G(\mathbb{A}^{(\infty)})$-action of $\bigl(\begin{smallmatrix} p^s & 0 \\ 0 & 1 \end{smallmatrix} \bigr)$. On the other hand, for $C=\X{R}[\mathfrak{p}^s]$ we have $\X{R}/C=\X{\mathfrak{p}^{-s}R}$ whose corresponding CM point $x(\mathfrak{p}^{-s}R)$ on $Sh$ is related to $x(R)$ via $G(\mathbb{A}^{(\infty)})$-action of $\bigl(\begin{smallmatrix}  1 & 0 \\ 0 & p^s \end{smallmatrix} \bigr)$. These two facts are known as Shimura's reciprocity law (\cite{ACM} 26.8 and \cite{minv} Section 3.2) and we may conventionally refer to maps that send $x(R)\mapsto x(\bar{\mathfrak{p}}^{-s}R)$ and $x(R)\mapsto x(\mathfrak{p}^{-s}R)$ on Shimura curve $Sh$, as Verschiebung and Frobenius maps, respectively,  even though we consider them when the characteristic of base ring is not $p$. This concludes aforementioned dictionary between algebro-geometric and adelic phrasing of Hecke relation among CM points on Shimura curve $Sh$.

The very same reasoning combined with Deligne's treatment of $Sh_{/\mathbb{Q}}$ implies that  $G(\mathbb{A}^{(\infty)})$-action of $\bigl(\begin{smallmatrix} l^{\nu(l)} & 0 \\ 0 & 1 \end{smallmatrix} \bigr)$, for split $l\in C_{sp}$, on \xw{\mathfrak{a}}{\mathcal{W}[\mu_{p^s}]} will move this point on $Sh$ to another \xw{\mathfrak{b}}{\mathcal{W}[\mu_{p^s}]} for a certain proper $R_{p^s}$-ideal $\mathfrak{b}$ whose class in $\mathrm{Cl}_M^-(p^s)$ is possibly different  from the one of $\mathfrak{a}$. The fact that action of $\bigl(\begin{smallmatrix} l^{\nu(l)} & 0 \\ 0 & 1 \end{smallmatrix} \bigr)$, for a split $l\in C_{sp}$, on \[\xw{\mathfrak{a}}{\mathcal{W}[\mu_{p^s}]} \]
will not impact the conductor of associated lattice is encoded in Deligne's treatment in the sense that diagonal matrix $\bigl(\begin{smallmatrix} l^{\nu(l)} & 0 \\ 0 & 1 \end{smallmatrix} \bigr)$ commutes with $\rho(R)$ since we realized $\rho(\alpha)=\bigl(\begin{smallmatrix} \bar{\alpha} & 0 \\ 0 & \alpha \end{smallmatrix} \bigr)$ for $\alpha\in R\otimes_{\mathbb{Z}}\mathbb{Z}_l=R_{\bar{\mathfrak{l}}}\oplus R_{\mathfrak{l}}=\mathbb{Z}_l\oplus\mathbb{Z}_l$, and consequently $\widehat{\mathbb{Z}}$-lattice $\bigl(\begin{smallmatrix} l^{\nu(l)} & 0 \\ 0 & 1 \end{smallmatrix} \bigr)^{-1}\widehat{R}$ is still stable under $\widehat{R}$, so that $\bigl(\begin{smallmatrix} l^{\nu(l)} & 0 \\ 0 & 1 \end{smallmatrix} \bigr)^{-1}\widehat{R}\cap\mathbb{Q}^2$ is a $\mathbb{Z}$-lattice still stable under $R$. 

However, for a ramified or inert prime $l|N_{ns}$, the matrix $\bigl(\begin{smallmatrix} l^{\nu(l)} & 0 \\ 0 & 1 \end{smallmatrix} \bigr)$ does not commute with $\rho(R)$ since we realized $\rho(x+y\sqrt{d_0})=\bigl(\begin{smallmatrix} x & y \\ d_0y & x \end{smallmatrix} \bigr)$ for $x+y\sqrt{d_0}\in (R\otimes_{\mathbb{Z}}\mathbb{Z}_l)^\times$ and the action of $\bigl(\begin{smallmatrix} l^{\nu(l)} & 0 \\ 0 & 1 \end{smallmatrix} \bigr)$ changes the conductor of associated lattice in 
\[\xw{\mathfrak{b}}{\mathcal{W}[\mu_{p^s}]}\; ,\] 
namely, $\bigl(\begin{smallmatrix} l^{\nu(l)} & 0 \\ 0 & 1 \end{smallmatrix} \bigr)^{-1}\widehat{R}\cap\mathbb{Q}^2$ is a lattice that is a proper $R_{l^{\nu(l)}}$-ideal of $R$, hence this action will move 
\[\xw{\mathfrak{b}}{\mathcal{W}[\mu_{p^s}]}\]
to some \xw{\mathfrak{c}}{\mathcal{W}[\mu_{p^s}]} for certain proper $R_{l^{\nu(l)}p^s}$-ideal $\mathfrak{c}$. This means that ultimately $[z_1,g_1^{(\infty)}]$ corresponds to point \xw{\mathfrak{d}}{\mathcal{W}[\mu_{p^s}]} on $Sh$ for certain proper $R_{N_{ns}p^s}$-ideal $\mathfrak{d}$. That being said, we can choose $\sigma_1\in \mathrm{Gal}(W[\mu_{p^s}]/W)$ such that $\X{\mathfrak{d}}^{\sigma_1}=\X{R_{N_{ns}p^s}}$ and replace the embedding $\iota_p : \bar{\mathbb{Q}} \hookrightarrow \mathbb{C}_p$ we fixed in the introduction, with $\iota_p\circ\sigma_1$. This allows us to assume that $[z_1,g_1^{(\infty)}]$ corresponds to point \xw{R_{N_{ns}p^s}}{\mathcal{W}[\mu_{p^s}]} on $Sh$. Indeed, replacing $\iota_p$ with $\iota_p\circ\sigma_1$ is tantamount to replacing \X{R}, $z_1$ and $C_1=\langle\zeta_{p^s}^{-1}\gamma_{p^s}\rangle$ with \X{R^{\sigma_1}}, $\sigma_1(z_1)$ and $C_1^{\sigma_1}=\langle\sigma_1(\zeta_{p^s})^{-1}\sigma_1(\gamma_{p^s})\rangle$, respectively, in the above argument. 

Set $\eta=(\eta^{(p)},\eta_p^{\mathrm{\acute{e}t}}\times \eta_p^{\mathrm{ord}})$ and let $\mathfrak{a}_1,\ldots,\mathfrak{a}_{h^-}$ be the complete set of representatives of proper $R_{N_{ns}p^s}$-ideal classes in $\mathrm{Cl}_M^-(N_{ns}p^s)$ so that $\widehat{\mathfrak{a}}_j=a_j\widehat{R}_{N_{ns}p^s}$ for $j=1,\ldots,h^-$. Note that our definition of $\rho:(M^{(\infty)}_{\mathbb{A}})^{\times}\hookrightarrow G(\mathbb{A}^{(\infty)})$ can be conveyed in the following commutative diagram for $\alpha\in(M^{(\infty)}_{\mathbb{A}})^{\times}$: 
\[\begin{CD}
\mathcal{T}(\X{\alpha R}) @<\alpha<< \mathcal{T}(\X{R})\\
@A{\cong}A\eta(\alpha R)A @A{\cong}A\eta(R)A\\
\alpha\widehat{R} @<\rho(\alpha)<< \widehat{R}
\end{CD}\]
so that $\eta(\alpha R)\circ\rho(\alpha)=\eta(R)$ and consequently $x(\alpha R)=x(R)\circ \rho(\alpha)^{-1}$. In particular, when $\alpha\in R^\times$, $\rho(\alpha)$ fixes point $x(R)$ on $Sh$. Similarly, commutative diagrams
\[\begin{CD}
\mathcal{T}(\X{\mathfrak{a}_j}) @<a_j<< \mathcal{T}(\X{R_{N_{ns}p^s}})\\
@A{\cong}A\eta(\mathfrak{a}_j)A @A{\cong}A\eta(R_{N_{ns}p^s})A\\
\widehat{\mathfrak{a}}_j @<\rho(a_j)<< \widehat{R}_{N_{ns}p^s}
\end{CD}\]
tell us that  $\eta(\mathfrak{a}_j)\circ\rho(a_j)=\eta(R_{N_{ns}p^s})$ whence $x(\mathfrak{a}_j)=x(R_{N_{ns}p^s})\circ \rho(a_j^{-1})$.
We may choose representatives $\mathfrak{a}_1,\ldots,\mathfrak{a}_{h^-}$ such that $a_{j,l}=1$ at finite set of primes $l|N_{ns}p$ so that $\rho(a_j^{-1})$ commutes with $g_1$. 

In conclusion, the points $[z_1,\rho(a_j^{-1})g_1^{(\infty)}]$ in $Sh(\mathbb{C})$ correspond to 
\[x(\mathfrak{a}_j)=\xw{\mathfrak{a}_j}{\mathcal{W}[\mu_{p^s}]} \]
on $Sh_{/\mathbb{Q}}$.

Thus we have
{\allowdisplaybreaks
\begin{align} \nonumber
\left(\frac{\varphi_{\mathbb{Q}}(N_{ns}p^s)}{2\varphi_M(N_{ns}p^s)}\right)^{-1}L_{\chi_m}(\mathbf{f}_m)
&\stackrel{(\ref{int-sum})}{=} \sum_{j=1}^{h^-}\chi_m(a_j^{-1})\mathbf{f}_m(\rho(a_j^{-1})g_1) \\
&\stackrel{(\ref{der})}{=} |\mathrm{det}(g_1^{(\infty)})|_\mathbb{A}^{-m} \sum_{j=1}^{h^-}\chi_m(a_j^{-1})|\mathrm{det}(\rho(a_j^{-1}))|_{\mathbb{A}}^{-m} \delta_k^m\mathbf{f}(\rho(a_j^{-1})g_1) \nonumber \\
&=j(g_{1,\infty},\mathbf{i})^{-k-2m}|\mathrm{det}(g_1^{(\infty)})|_\mathbb{A}^{-m}\sum_{j=1}^{h^-}\chi_m(a_j^{-1})\left(|\mathrm{det}(\rho(a_j^{-1}))|_{\mathbb{A}}^{-m}(\delta_k^mf)[z_1,\rho(a_j^{-1})g_1^{(\infty)}])\right) \nonumber \\
&=j(g_{1,\infty},\mathbf{i})^{-k-2m}|\mathrm{det}(g_1^{(\infty)})|_\mathbb{A}^{-m}\sum_{j=1}^{h^-}\chi_m(a_j^{-1})\left(|a_j^{-1}|_{M_\mathbb{A}}^{-m} (\delta_k^mf)(x(\mathfrak{a}_j),\omega_\infty(\mathfrak{a}_j))\right) \nonumber \\
&=j(g_{1,\infty},\mathbf{i})^{-k-2m}|\mathrm{det}(g_1^{(\infty)})|_\mathbb{A}^{-m}\sum_{j=1}^{h^-}\left(\chi_m(a_j^{-1})|a_j|_{M_\mathbb{A}}^m\right) (\delta_k^mf)(x(\mathfrak{a}_j),\omega_\infty(\mathfrak{a}_j)) \label{adelic-sum}
\end{align} }

Let $\mathfrak{A}$ be a proper $R_{N_{ns}}$-ideal. Note that $\X{\mathfrak{A}}[p^n]=\X{\mathfrak{A}}[\bar{\mathfrak{p}}^n]\oplus\X{\mathfrak{A}}[\mathfrak{p}^n]=\mathbb{Z}/p^n\mathbb{Z}\oplus\mu_{p^n}$ over $\mathcal{W}$ for all $n\geq 1$. Clearly, in the above argument of determining Hecke orbit of $x(R)$ on $Sh$ one can replace \X{R} by \X{\mathfrak{A}} and fixed $s$ by arbitrary $n\geq 1$ and the argument is still valid. We summarize this for future reference in the following
\begin{adelic}\label{adelic}
Let $C\subset \X{\mathfrak{A}}[p^n]$ be a rank $p^n$ finite flat subgroup scheme of $\X{\mathfrak{A}}[p^n]$ \'etale  over $\mathcal{W}[\mu_{p^n}]$ that is \'etale locally isomorphic to $\mathbb{Z}/p^n\mathbb{Z}$ after faithfully flat extension of scalars, $n\geq1$. Then
\begin{itemize}
\item[(Frob)] If $C=\X{\mathfrak{A}}[\mathfrak{p}^n]=\mu_{p^n}$ then $x(\mathfrak{A})/C=x(\mathfrak{A})\circ \bigl(\begin{smallmatrix} 1 & 0 \\ 0 & p^n \end{smallmatrix} \bigr)=x(\mathfrak{p}^{-n}\mathfrak{A})$
\item[(Ver)] 
\begin{enumerate}
\item If $C=\X{\mathfrak{A}}[\bar{\mathfrak{p}}^n]$ then $x(\mathfrak{A})/C=x(\mathfrak{A})\circ \bigl(\begin{smallmatrix} p^n & 0 \\ 0 & 1 \end{smallmatrix} \bigr)=x(\bar{\mathfrak{p}}^{-n}\mathfrak{A})$
\item If $C=\langle\zeta_{p^n}^{-u}\gamma_{p^n}\rangle$ for some $1\leq u \leq p^n-1$ such that $\mathrm{gcd}(u,p)=1$, then $x(\mathfrak{A})/C=x(\mathfrak{A})\circ \bigl(\begin{smallmatrix} p^n & u \\ 0 & 1 \end{smallmatrix} \bigr)=x(\tilde{\mathfrak{a}}_u)$ for a proper $R_{N_{ns}p^n}$-ideal $\tilde{\mathfrak{a}}_u$ such that $\tilde{\mathfrak{a}}_uR_{N_{ns}}=\bar{\mathfrak{p}}^{-n}\mathfrak{A}R_{N_{ns}}$.
\end{enumerate}
\end{itemize}
\end{adelic}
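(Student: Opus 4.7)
The plan is to show that the Proposition is a direct extension of the case-by-case analysis already carried out in the body of Section \ref{sec:hecke} for $\mathfrak{A}=R$ (and fixed exponent $s$). The key input is that for any proper $R_{N_{ns}}$-ideal $\mathfrak{A}$ with $\mathfrak{A}_p=R\otimes_{\mathbb{Z}}\mathbb{Z}_p$, the elliptic curve $X(\mathfrak{A})_{/\mathcal{W}}$ has $p$-divisible group identical in structure to that of $X(R)_{/\mathcal{W}}$: namely, under the splitting $R\otimes_{\mathbb{Z}}\mathbb{Z}_p=R_{\bar{\mathfrak{p}}}\oplus R_{\mathfrak{p}}$, we have a canonical decomposition $X(\mathfrak{A})[p^n]=X(\mathfrak{A})[\bar{\mathfrak{p}}^n]\oplus X(\mathfrak{A})[\mathfrak{p}^n]=\mathbb{Z}/p^n\mathbb{Z}\oplus\mu_{p^n}$ over $\mathcal{W}$, with the first factor étale and the second connected. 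Moreover, the level structures $\eta^{(p)}(\mathfrak{A})$, $\et{\mathfrak{A}}$ and $\ord{\mathfrak{A}}$ were built in Section \ref{subsec:cm} by étale pull-back/push-forward from those on $X(R)$ via $X(R\cap\mathfrak{A})$, hence are compatible with this decomposition.

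The plan is then to handle the three cases in parallel. For (Frob), $C=\mu_{p^n}$ is the multiplicative part, and the quotient $X(\mathfrak{A})/C\cong X(\mathfrak{p}^{-n}\mathfrak{A})$ follows from the evident lattice isomorphism $\mathfrak{p}^{-n}\mathfrak{A}/\mathfrak{A}\cong R/\mathfrak{p}^n\cong\mu_{p^n}$ (as $\mathfrak{p}$-part, i.e.\ connected); on the adelic side, acting by $\bigl(\begin{smallmatrix} 1 & 0 \\ 0 & p^n\end{smallmatrix}\bigr)$ scales the ``$\mathfrak{p}$-component'' $w_2$ of any $\widehat{\mathbb{Z}}$-basis $(w_1,w_2)$ of $\widehat{\mathfrak{A}}$ to $w_2/p^n$, giving a basis of $\widehat{\mathfrak{p}^{-n}\mathfrak{A}}$. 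Case (Ver)(1) is formally symmetric: $C=X(\mathfrak{A})[\bar{\mathfrak{p}}^n]$ is étale, $X(\mathfrak{A})/C\cong X(\bar{\mathfrak{p}}^{-n}\mathfrak{A})$ by the same lattice consideration on the $\bar{\mathfrak{p}}$-component, and the matrix $\bigl(\begin{smallmatrix} p^n & 0 \\ 0 & 1\end{smallmatrix}\bigr)$ implements the change of basis $(w_1,w_2)\mapsto(w_1/p^n,w_2)$.

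Case (Ver)(2) is the substantive one. Following the argument given in the text for $\mathfrak{A}=R$, I would identify $\gamma_{p^n}(\mathbb{C})$ with $w_1/p^n$ and $\zeta_{p^n}(\mathbb{C})$ with $w_2/p^n$, so that $C_u(\mathbb{C})$ corresponds to $\widehat{\mathbb{Z}}\tfrac{w_1-uw_2}{p^n}\big/\widehat{\mathbb{Z}}(w_1,w_2)^\intercal$. Then the new lattice $\widehat{\mathfrak{A}}+\widehat{\mathbb{Z}}\tfrac{w_1-uw_2}{p^n}=\bigl(\begin{smallmatrix} p^n & u \\ 0 & 1\end{smallmatrix}\bigr)^{-1}\widehat{\mathfrak{A}}$ determines the quotient; Deligne's procedure (Cases 1--3 recalled at the end of Section \ref{sec:shvar}) identifies $X(\mathfrak{A})/C_u$ up to isogeny with $X(\tilde{\mathfrak{a}}_u)$ for the unique proper $M$-lattice $\tilde{\mathfrak{a}}_u$ with the prescribed completion. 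The lattice $\bigl(\begin{smallmatrix} p^n & u \\ 0 & 1\end{smallmatrix}\bigr)^{-1}\widehat{\mathfrak{A}}$ is no longer stable under $R\otimes_{\mathbb{Z}}\mathbb{Z}_p$ because $\bigl(\begin{smallmatrix} p^n & u \\ 0 & 1\end{smallmatrix}\bigr)$ does not commute with the diagonal action $\rho(\alpha)=\bigl(\begin{smallmatrix}\bar\alpha & 0 \\ 0 & \alpha\end{smallmatrix}\bigr)$, but a direct computation shows it is stable precisely under $R_{p^n}\otimes\mathbb{Z}_p$, so $\tilde{\mathfrak{a}}_u$ is a proper $R_{N_{ns}p^n}$-ideal. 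The identity $\tilde{\mathfrak{a}}_uR_{N_{ns}}=\bar{\mathfrak{p}}^{-n}\mathfrak{A}R_{N_{ns}}$ then follows by enlarging the order: in the $R_{N_{ns}}$-span the $u w_2$ correction is absorbed into the $\mathfrak{p}$-component and only the $\bar{\mathfrak{p}}$-scaling $w_1\mapsto w_1/p^n$ survives, reducing this to Case (Ver)(1).

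The main technical obstacle — indeed the only non-routine point — is justifying that Deligne's Cases 1--3 identify the abstract quotient $X(\mathfrak{A})/C_u$ concretely with $X(\tilde{\mathfrak{a}}_u)$ equipped with the asserted transported level structure; this is precisely the content of the dictionary developed in the text between the adelic $G(\mathbb{A}^{(\infty)})$-action on $Sh$ and geometric quotients by finite flat subgroup schemes of $p$-power order. Everything else reduces to matrix arithmetic on bases of $\widehat{\mathfrak{A}}$ together with the observation, emphasized in Remark \ref{switch}, that the ordering $R\otimes\mathbb{Z}_p=R_{\bar{\mathfrak{p}}}\oplus R_{\mathfrak{p}}$ makes the first basis vector correspond to the étale part and the second to the multiplicative part, so that the matrix actions have the claimed geometric meaning.
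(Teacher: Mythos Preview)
Your proposal is correct and follows exactly the paper's approach: the paper does not give a separate proof of this Proposition but simply states, in the paragraph immediately preceding it, that the detailed argument already given for $\mathfrak{A}=R$ (and exponent $s$) goes through verbatim upon replacing $X(R)$ by $X(\mathfrak{A})$ and $s$ by $n$, since $X(\mathfrak{A})[p^n]=X(\mathfrak{A})[\bar{\mathfrak{p}}^n]\oplus X(\mathfrak{A})[\mathfrak{p}^n]=\mathbb{Z}/p^n\mathbb{Z}\oplus\mu_{p^n}$ over $\mathcal{W}$. Your write-up is in fact more explicit than the paper's, spelling out the lattice/basis computations in each of the three cases and the reason the conductor jumps in (Ver)(2), but the underlying argument is identical.
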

 
\section{Serre--Tate deformation space}
We recall some basic facts from deformation theory of elliptic curves over complete local $W$-algebras whose residue field is $\bar{\mathbb{F}}_p$. Denote by $CL_{/W}$ the category of such algebras. We follow Katz's exposition \cite{Ka78}.
Fix a proper $R_{N_{ns}}$-ideal $\mathfrak{A}$ prime to $p$ and consider $x(\mathfrak{A})\in Sh$. The Serre--Tate deformation space $\widehat{S}$ represents the functor $\widehat{\mathcal{P}}:CL_{/W}\to SETS$ given by
\[\widehat{\mathcal{P}}(A)=\{E_{/A}|E\otimes_A\bar{\mathbb{F}}_p=\Xw{\mathfrak{A}}{\bar{\mathbb{F}}_p}\}_{/\cong}\;.\]
If $\widehat{Sh}^{(p)}_{x/W}$ is formal completion of $Sh^{(p)}_{/\mathcal{W}}$ along $x=(\X{\mathfrak{A}},\eta^{(p)}(\mathfrak{A}))\in Sh^{(p)}(\bar{\mathbb{F}}_p)$, then by universality of Shimura curve $Sh^{(p)}$ we have $\widehat{Sh}^{(p)}_{x/W}\cong\widehat{S}_{/W}$. Indeed, $\widehat{Sh}^{(p)}_{x}$ classifies $(E,\eta^{(p)}_E)_{/A}$ with $(E,\eta^{(p)}_E)\otimes_{A}\bar{\mathbb{F}}_p=(\X{\mathfrak{A}},\eta^{(p)}(\mathfrak{A}))$ and since $E[N]$ for $N$ prime to $p$ is \'etale over $\mathop{Spec}(A)$ the level structure $\eta^{(p)}(\mathfrak{A})$ at the special fiber extends  uniquely to $\eta^{(p)}_E$ on $E_{/A}$. On the other hand, Serre--Tate deformation theory yields canonical isomorphism $\widehat{S}_{/W}\cong\widehat{\mathbb{G}}_{m/W}$. Indeed, $E_{/A}\in\widehat{\mathcal{P}}(A)$ is determined by extension class of connected component--\'etale quotient exact sequence of Barsotti--Tate groups
\begin{equation}\label{cceq}
0\longrightarrow E[p^\infty]^\circ \stackrel{i_E}{\longrightarrow} E[p^\infty] \stackrel{i^*_E}{\longrightarrow} E[p^\infty]^{\acute{e}t} \longrightarrow 0
\end{equation}
and by a theorem of Serre and Tate (Theorem 2.3 in \cite{minv}) such an extension class over $A$ is classified by
\[\mathrm{Hom}( E[p^\infty]^{\acute{e}t}_{/A},E[p^\infty]^\circ_{/A})\cong\mathrm{Hom}({\mathbb{Q}_p/\mathbb{Z}_p}_{/A},{\mu_{p^\infty}}_{/A})= \varprojlim_n\mu_{p^n}(A)=\widehat{\mathbb{G}}_m(A)\;.\]
In order to make this identification, we used fixed $\ord{\mathfrak{A}}:\mu_{p^\infty}\cong \X{\mathfrak{A}}[p^\infty]^\circ$ and its Cartier dual inverse $\et{\mathfrak{A}}:\mathbb{Q}_p/\mathbb{Z}_p\cong\X{\mathfrak{A}}[p^\infty]^{\acute{e}t}$. 

Note that in the preceding argument we could think of deformation of  \xw{\mathfrak{A}}{\bar{\mathbb{F}}_p} since for every complete local algebra $A\in CL_{/W}$ and every deformation $E_{/A}$ of \X{\mathfrak{A}}, $E[p^\infty]^{\acute{e}t}$ is \'etale over $\mathop{Spec}(A)$ and the deformation is insensitive of $p$-part $\eta_p=\eta^{\mathrm{\acute{e}t}}\times \eta^{\mathrm{ord}}$ of the full level structure $\eta=\eta^{(p)}\times\eta_p$. We briefly explain this point. Let $(\mathbb{E},\eta_{\mathbb{E}})$ be the universal pair over $Sh^{(p)}$. The Igusa tower over $Sh^{(p)}_{/\bar{\mathbb{F}}_p}$ is defined by
\[Ig:=\varprojlim_n\mathrm{Isom}_{\mathrm{gp-sch}}(\mu_{p^n/Sh^{(p)}},\mathbb{E}[p^n]^\circ_{/Sh^{(p)}})\]
If $V^{(p)}$ is geometrically irreducible component of $Sh^{(p)}$ containing point $x(\mathfrak{A})$ then we write $Ig_{/V^{(p)}}$ for the pull-back of the Igusa tower to  $V^{(p)}_{/\bar{\mathbb{F}}_p}$. The scheme $Ig$ is \'etale faithfully flat over the ordinary locus of $V^{(p)}_{/\bar{\mathbb{F}}_p}$ and the $p$-ordinary level structure $\ord{\mathfrak{A}}$ defines a point $x$ on $Ig_{/V^{(p)}}$. Moreover the theorem of Serre and Tate canonically identifies Serre--Tate deformation space $\widehat{S}_{/W}\cong\widehat{\mathbb{G}}_{m/W}$ to the formal completion $\widehat{Ig}$ at $x$ via $\ord{\mathfrak{A}}$.  

Let $t$ be the canonical coordinate of Serre--Tate deformation space $\widehat{S}$ so that \[\widehat{S}\cong\widehat{\mathbb{G}}_m=\mathop{Spf}(\varprojlim_n W[t,t^{-1}]/(t-1)^n)=Spf(W[[T]])\quad (T=t-1)\; .\]
Note that since we used fixed $\ord{\mathfrak{A}}:\mu_{p^\infty}\cong \X{\mathfrak{A}}[p^\infty]^\circ$ and $\et{\mathfrak{A}}:\mathbb{Q}_p/\mathbb{Z}_p\cong\X{\mathfrak{A}}[p^\infty]^{\acute{e}t}$ to identify $\widehat{S}\cong\widehat{\mathbb{G}}_m$, Serre--Tate coordinate $t=t_{\mathfrak{A}}$ depends on the point $x(\mathfrak{A})\in Sh$. We adopt convention to write $t$ for $t_{\mathfrak{A}}$ where $\mathfrak{A}$ is the proper $R_{N_{ns}}$-ideal fixed throughout this section and that considering Serre--Tate coordinate with respect to origin other than $x(\mathfrak{A})$ will be emphasized by adding appropriate subscript to $t$.

Let $(\boldsymbol{\mathcal{E}},\boldsymbol{\eta})$ be universal deformation of \X{\mathfrak{A}} over $\widehat{S}\cong\widehat{\mathbb{G}}_m$. For each $p$-adic modular form $f\in V(N;W)$ we call the expansion
\[f(t):=f(\boldsymbol{\mathcal{E}},\boldsymbol{\eta})\in W[[T]]\quad (T=t-1)\]
a $t$-expansion of $f$ with respect to Serre--Tate coordinate around $x(\mathfrak{A})$. We have the following $t$-expansion principle
\[\tag{t-exp} \text{The }t\text{-expansion: }f\mapsto f(t)\in W[[T]] \text{ determines }f\text{ uniquely.}\]
A key fact is that by its construction (\cite{Ka78} 4.3.1) Katz $p$-adic differential operator is $\widehat{S}$-invariant and canonical Serre--Tate coordinate $t$ is normalized so that 
\begin{equation}\label{katzdiff}
d=t\frac{\mathrm{d}}{\mathrm{d}t} \; .
\end{equation}
Note that the $t$-expansion of $f$ with respect to Serre--Tate coordinate $t$ around point $x(\mathfrak{A})$ can be computed as Taylor expansion of $f$ with respect to variable $T$ by applying $\frac{\mathrm{d}}{\mathrm{d}T}$ and evaluating result at $x(\mathfrak{A})$ (see (4.6) of \cite{minv}).

For any deformation \xw{E}{A} of \xw{\mathfrak{A}}{\bar{\mathbb{F}}_p} where $A\in CL_{/W}$, we can compute the assigned value of Serre--Tate coordinate $t(E_{/A})\in\widehat{\mathbb{G}}_m(A)$ in the following way (Sections 2.1-2.3 of \cite{minv}). Since any object in $CL_{/W}$ is a projective limit of artinian objects, we first assume that $A$ is an artinian object in  $CL_{/W}$. Using $\ord{E}:\mu_{p^\infty}\cong E[p^\infty]^\circ$ and $\et{E}:\mathbb{Q}_p/\mathbb{Z}_p\cong E[p^\infty]^{\acute{e}t}$, the connected component--\'etale quotient exact sequence of Barsotti--Tate groups (\ref{cceq}) becomes
\[ 0\longrightarrow \mu_{p^\infty} \stackrel{i_E}{\longrightarrow} E[p^\infty] \stackrel{i^*_E}{\longrightarrow} \mathbb{Q}_p/\mathbb{Z}_p\longrightarrow 0 \;. \]
Note that $E^\circ(A)$ is killed by $p^{n_0}$ for sufficiently large $n_0$ due to Drinfeld's theorem in deformation theory (Theorem 2.1 in \cite{minv}). Starting from $y\in E(\bar{\mathbb{F}}_p)$ we can always lift it to $\tilde{y}\in E(A)$ because of smoothness of $E_{/A}$ and the lift $\tilde{y}$ is determined modulo $\mathrm{Ker}(E(A)\rightarrow E(\bar{\mathbb{F}}_p))= E^\circ$ which is a subgroup of $E[p^n]$ if $n\geq n_0$. Thus $p^n\tilde{y}\in E^\circ(A)$ is uniquely determined by $y\in E(\bar{\mathbb{F}}_p)$ and if $y\in E[p^n]$ then $p^n\tilde{y}\in E^\circ(A)$ yielding a homomorphism $``{p^n}":E[p^n](\bar{\mathbb{F}}_p)\rightarrow E^\circ(A)$, via $y\mapsto p^n\tilde{y}$, called Drinfeld lift of multiplication by $p^n$ (\cite{Ka78} Lemma 1.1.2). Taking $1\in\mathbb{Z}_p\cong\mathcal{T}(\mathbb{Q}_p/\mathbb{Z}_p)$ and viewing it as $1=\varprojlim_n\frac{1}{p^n}$ for $\frac{1}{p^n}\in\mathbb{Q}_p/\mathbb{Z}_p[p^n]$, the value $``{p^n}" i_E^{*-1}(\frac{1}{p^n})\in \mu_{p^n}(A)$ becomes stationary if $n\geq n_0$ and we have
\[t(E_{/A})=\varprojlim_n ``{p^n}"i_E^{*-1}\left(\frac{1}{p^n}\right)\in \varprojlim_n \mu_{p^n}(A)=\widehat{\mathbb{G}}_m(A)\;.\]
If an object $A \in CL_{/W}$ is not artinian and $E$ is a deformation over $A$, writing $A=\varprojlim_B B$ for artinian $B$, we define $t(E_{/A})= \varprojlim_B t(E\times_A B_{/B})$.
Note that $t(E_{/A})=q_{E/A}(1,1)$ where $q_{E/A}(\cdot,\cdot):\mathcal{T}E[p^\infty]^{\acute{e}t}\times \mathcal{T}E[p^\infty]^{\acute{e}t}\rightarrow \widehat{\mathbb{G}}_m(A)$ is a bilinear form in Section 2 of \cite{Ka78} that corresponds to deformation $E_{/A}$ in establishing representability of deformation functor $\widehat{\mathcal{P}}$ by formal torus $\mathrm{Hom}_{\mathbb{Z}_p}(\mathcal{T}E[p^\infty]^{\acute{e}t}\times \mathcal{T}E[p^\infty]^{\acute{e}t},\widehat{\mathbb{G}}_m)$.

By definition, $t(\X{\mathfrak{A}})=1$ because the connected component--\'etale quotient exact sequence of $X(\mathfrak{A})[p^\infty]$ splits over $\mathcal{W}$ by complex multiplication and the existence of section $\mathscr{S}$
\[\xymatrix@1{ 0 \ar[r] & \mu_{p^\infty} \ar[r]^-{i_\mathfrak{A}} & X(\mathfrak{A})[p^\infty] \ar[r]^-{i^*_{\mathfrak{A}}} & \mathbb{Q}_p/\mathbb{Z}_p \ar@/^1pc/[l]^-{\mathscr{S}} \ar[r] & 0
}\]
allows us to verify
\[t(X(\mathfrak{A}))=\varprojlim_n ``{p^n}"i_{\mathfrak{A}}^{*-1}\left(\frac{1}{p^n}\right)=\varprojlim_n p^n\mathscr{S}\left(\frac{1}{p^n}\right)= \varprojlim_n 1= 1 \;. \]
We also have $t(\boldsymbol{\mathcal{E}})=t$ by definition and the pair $(E,\eta)_{/A}$ is the fiber of universal $(\boldsymbol{\mathcal{E}},\boldsymbol{\eta})$ at a point $t(E)\in\widehat{\mathbb{G}}_m(A)$. 

For every $s\geq 1$ there is a bijective correspondence:
\[\{ \text{rank } p^s \text{ \'etale subgroup schemes } \mathcal{C} \text{ of }  \boldsymbol{\mathcal{E}}[p^s] \text{ over }  \widehat{W[t,t^{-1}][t^{1/p^s}, \mu_{p^s}]} \}  \]
\begin{equation} 
\label{etale} \updownarrow 
\end{equation}
\[ \{ \text{rank } p^s \text{ \'etale subgroup schemes } C \text{ of } X(\mathfrak{A})[p^s] \text{ over }  \mathcal{W}[\mu_{p^s}] \} \nonumber
\]
given by
\[ \mathcal{C} \longleftrightarrow C=\mathcal{C}\times_{\boldsymbol{\mathcal{E}}[p^s]} X(\mathfrak{A})[p^\infty] \; .
\]
Recall that $\X{\mathfrak{A}}[p^s]=\X{\mathfrak{A}}[\bar{\mathfrak{p}}^s]\oplus\X{\mathfrak{A}}[\mathfrak{p}^s]=\mathbb{Z}/p^s\mathbb{Z}\oplus\mu_{p^s}$ over $\mathcal{W}$. The effect of Frobenius map on the Serre--Tate coordinate is given by the following
\begin{frob} If $\mathcal{C}\subset \boldsymbol{\mathcal{E}}[p^s]$ is rank $p^s$ finite flat subgroup scheme \'etale over $\widehat{W[t,t^{-1}][t^{1/p^s}, \mu_{p^s}]}$ corresponding to $C=\mu_{p^s}$ in (\ref{etale}), then $t(\boldsymbol{\mathcal{E}}/\mathcal{C})=t^{p^s}$.
\end{frob}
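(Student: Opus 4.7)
The plan is to reduce the statement to a direct application of the Drinfeld ``$p^n$'' recipe recalled just before the lemma. First, I would pin down $\mathcal{C}$: being a rank $p^s$ lift of the connected subgroup $\mu_{p^s}=\X{\mathfrak{A}}[\mathfrak{p}^s]\subset\X{\mathfrak{A}}[p^\infty]^\circ$, by rigidity of the connected component of a Barsotti--Tate group it must coincide with the unique lift of $\mu_{p^s}$ inside $\boldsymbol{\mathcal{E}}[p^\infty]^\circ$. Under the Serre--Tate identification $\mu_{p^\infty}\cong\boldsymbol{\mathcal{E}}[p^\infty]^\circ$ this lift is simply $\mu_{p^s}\subset\mu_{p^\infty}$; over $\widehat{W[t,t^{-1}][t^{1/p^s},\mu_{p^s}]}$ it becomes \'etale, so $\pi:\boldsymbol{\mathcal{E}}\to\mathcal{E}':=\boldsymbol{\mathcal{E}}/\mathcal{C}$ is a degree-$p^s$ isogeny of elliptic curves whose special fibre is $\X{\mathfrak{A}}/\mu_{p^s}=\X{\mathfrak{p}^{-s}\mathfrak{A}}$ in the Frobenius sense of Proposition~\ref{adelic}.

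Next, I would analyse how $\pi$ acts on the connected--\'etale exact sequence of $\boldsymbol{\mathcal{E}}[p^\infty]$. Because $\mathcal{C}$ sits entirely in the connected part, $\pi$ induces an isomorphism $\pi^{\mathrm{\acute{e}t}}:\boldsymbol{\mathcal{E}}[p^\infty]^{\mathrm{\acute{e}t}}\xrightarrow{\sim}\mathcal{E}'[p^\infty]^{\mathrm{\acute{e}t}}$, while on connected parts it realises the surjection $\mu_{p^\infty}\twoheadrightarrow\mu_{p^\infty}/\mu_{p^s}\cong\mathcal{E}'[p^\infty]^\circ$. The canonical identification $\mu_{p^\infty}/\mu_{p^s}\xrightarrow{\sim}\mu_{p^\infty}$ given by $[\zeta]\mapsto\zeta^{p^s}$ fixes the induced ordinary structure $\boldsymbol{\eta}^{\mathrm{ord}}_{\mathcal{E}'}:\mu_{p^\infty}\cong\mathcal{E}'[p^\infty]^\circ$ via the formula $\pi\circ\boldsymbol{\eta}^{\mathrm{ord}}(\zeta)=\boldsymbol{\eta}^{\mathrm{ord}}_{\mathcal{E}'}(\zeta^{p^s})$. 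I would need to check this agrees with the ordinary identification through which the Serre--Tate coordinate of $\mathcal{E}'$ is read; that reduces to the uniqueness of lifting the ordinary level structure from $\X{\mathfrak{p}^{-s}\mathfrak{A}}$ (fixed as in \S\ref{subsec:cm}) to any deformation.

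Finally, I would execute the Drinfeld recipe on $\mathcal{E}'$. For $y=\pi\bigl(\boldsymbol{\eta}^{\mathrm{\acute{e}t}}(1/p^n)\bigr)\in\mathcal{E}'[p^n]^{\mathrm{\acute{e}t}}$, lift $\boldsymbol{\eta}^{\mathrm{\acute{e}t}}(1/p^n)$ via $(i^*_{\boldsymbol{\mathcal{E}}})^{-1}$ to $\tilde{x}\in\boldsymbol{\mathcal{E}}(A)$; then $\pi(\tilde{x})\in\mathcal{E}'(A)$ lifts $y$, and $``p^n"(\pi(\tilde{x}))=\pi(p^n\tilde{x})$. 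By the defining property $t(\boldsymbol{\mathcal{E}})=t$, the element $p^n\tilde{x}$ stabilises to $\boldsymbol{\eta}^{\mathrm{ord}}(t)$ as $n\to\infty$; applying $\pi$ and the formula from the previous step yields $\pi\bigl(\boldsymbol{\eta}^{\mathrm{ord}}(t)\bigr)=\boldsymbol{\eta}^{\mathrm{ord}}_{\mathcal{E}'}(t^{p^s})$. Reading off the Serre--Tate coordinate through $\boldsymbol{\eta}^{\mathrm{ord}}_{\mathcal{E}'}$ therefore produces $t(\mathcal{E}')=t^{p^s}$. The main obstacle is the second step: locking down the normalisation of $\boldsymbol{\eta}^{\mathrm{ord}}_{\mathcal{E}'}$ so that the intrinsic $p^s$-th power twist matches the convention of \S\ref{subsec:cm}. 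Once that compatibility is in hand the rest of the argument is formal.
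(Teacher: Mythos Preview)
Your proposal is correct and follows essentially the same approach as the paper. The paper's proof compresses your three steps into a single commutative diagram of connected--\'etale sequences in which the left vertical arrow is multiplication by $p^s$ on $\mu_{p^\infty}$ and the right vertical arrow is the identity on $\mathbb{Q}_p/\mathbb{Z}_p$, then reads off $t(\boldsymbol{\mathcal{E}}/\mathcal{C})=t^{p^s}$ directly from the Drinfeld recipe; your worry about the normalisation of $\boldsymbol{\eta}^{\mathrm{ord}}_{\mathcal{E}'}$ is exactly what that left vertical arrow encodes, and the paper simply takes it as the definition of the induced ordinary structure on the quotient.
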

\begin{proof}
We have a commutative diagram with exact rows
\[ \begin{CD}
  @.       @.             \mathcal{C}                       @.   @.  \\
  @.       @.             @VVV                  @.   @.  \\
0 @>>>  \mu_{p^\infty} @>i_{\mathcal{E}}>> \mathcal{E}[p^\infty] @>i^*_{\mathcal{E}}>>  \mathbb{Q}_p/\mathbb{Z}_p @>>> 0  \\
  @.         @VVp^sV         @VV\pi V             @|  \\
0 @>>>  \mu_{p^\infty} @>i_{\mathcal{E}/\mathcal{C}}>> \mathcal{E}/\mathcal{C}[p^\infty] @>i^*_{\mathcal{E}/\mathcal{C}}>>  \mathbb{Q}_p/\mathbb{Z}_p @>>> 0 
\end{CD} \qquad . \]
from which we read
\[t=\varprojlim_n ``{p^n}"i_{\mathcal{E}}^{*-1}\left(\frac{1}{p^n}\right)\]
\[t(\mathcal{E}/\mathcal{C})= \varprojlim_n ``{p^n}"i_{\mathcal{E}/\mathcal{C}}^{*-1}\left(\frac{1}{p^n}\right)\]
and $t(\mathcal{E}/\mathcal{C})=t^{p^s}$
\end{proof}
On the other hand, the effect of Verschiebung map is given by the following
\begin{ver} If $\mathcal{C}\subset \boldsymbol{\mathcal{E}}[p^s]$ is rank $p^s$ finite flat subgroup scheme \'etale over $\widehat{W[t,t^{-1}][t^{1/p^s}, \mu_{p^s}]}$ corresponding to cyclic $C$ with $C\cap\mu_{p^s}=\{0\}$ in (\ref{etale}), then $t(\boldsymbol{\mathcal{E}}/\mathcal{C})^{p^s}=t$. More precisely, if $C=\langle \zeta_{p^s}^{-u}\gamma_{p^s} \rangle$, then $t(\boldsymbol{\mathcal{E}}/\mathcal{C})=\zeta_{p^s}^ut^{1/p^s}$.
\end{ver}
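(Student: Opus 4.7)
\noindent The plan is to mirror the proof of the preceding Frobenius lemma, swapping the roles of the connected and \'etale parts of the connected--\'etale exact sequence. Since $\mathcal{C}$ is cyclic and \'etale of order $p^s$ with $\mathcal{C}\cap\mu_{p^s}=\{0\}$, the subgroup $\mathcal{C}$ injects into the \'etale quotient of $\boldsymbol{\mathcal{E}}[p^\infty]$ with image exactly $\tfrac{1}{p^s}\mathbb{Z}/\mathbb{Z}\subset\mathbb{Q}_p/\mathbb{Z}_p$. Consequently, the isogeny $\pi\colon\boldsymbol{\mathcal{E}}\to\boldsymbol{\mathcal{E}}/\mathcal{C}$ restricts to an isomorphism on formal (connected) parts -- modding out by an \'etale subgroup does not change the formal group, and under the canonical normalization $\ord{\boldsymbol{\mathcal{E}}/\mathcal{C}}=\pi\circ\ord{\boldsymbol{\mathcal{E}}}$ of Section~\ref{subsec:cm} this becomes the identity on $\mu_{p^\infty}$ -- while on the \'etale quotient it induces multiplication by $p^s$. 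This produces the commutative diagram with exact rows
\[
\begin{CD}
0 @>>> \mu_{p^\infty} @>i_{\boldsymbol{\mathcal{E}}}>> \boldsymbol{\mathcal{E}}[p^\infty] @>i^*_{\boldsymbol{\mathcal{E}}}>> \mathbb{Q}_p/\mathbb{Z}_p @>>> 0 \\
@. @| @VV\pi V @VV[p^s]V @. \\
0 @>>> \mu_{p^\infty} @>i_{\boldsymbol{\mathcal{E}}/\mathcal{C}}>> (\boldsymbol{\mathcal{E}}/\mathcal{C})[p^\infty] @>i^*_{\boldsymbol{\mathcal{E}}/\mathcal{C}}>> \mathbb{Q}_p/\mathbb{Z}_p @>>> 0
\end{CD}
\]
dual to the one exploited in the Frobenius case.

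\noindent Next, feed this into the Drinfeld-lift recipe for $t$. Pick $y=(i^*_{\boldsymbol{\mathcal{E}}})^{-1}(1/p^{n+s})\in\boldsymbol{\mathcal{E}}[p^{n+s}](\bar{\mathbb{F}}_p)$; by the right column, $\pi(y)\in(\boldsymbol{\mathcal{E}}/\mathcal{C})[p^n](\bar{\mathbb{F}}_p)$ satisfies $i^*_{\boldsymbol{\mathcal{E}}/\mathcal{C}}(\pi(y))=1/p^n$. Choosing any lift $\tilde y\in\boldsymbol{\mathcal{E}}(A)$, the two Drinfeld lifts read
\[
``p^{n+s}"\,y=p^{n+s}\tilde y\in\mu_{p^\infty}(A),\qquad ``p^n"\,\pi(y)=\pi(p^n\tilde y)\in\mu_{p^\infty}(A).
\]
Raising the second to the $p^s$-th power inside $\mu_{p^\infty}(A)$ and using that the left column of the diagram is the identity (so that $\pi$ acts trivially on the connected part), we obtain
\[
\bigl(``p^n"\,\pi(y)\bigr)^{p^s}=\pi(p^{n+s}\tilde y)=p^{n+s}\tilde y=``p^{n+s}"\,y,
\]
and passing to the inverse limit in $n$ yields $t(\boldsymbol{\mathcal{E}}/\mathcal{C})^{p^s}=t$.

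\noindent The refinement $t(\boldsymbol{\mathcal{E}}/\mathcal{C})=\zeta_{p^s}^u t^{1/p^s}$ selects which of the $p^s$ distinct $p^s$-th roots of $t$ available in the extended base $\widehat{W[t,t^{-1}][t^{1/p^s},\mu_{p^s}]}$ corresponds to the particular lift $\mathcal{C}_u=\langle\zeta_{p^s}^{-u}\gamma_{p^s}\rangle$. The $p^{s-1}(p-1)$ admissible values of $u$ index exactly the $p^{s-1}(p-1)$ cyclic lifts $\mathcal{C}_u$ of the \'etale cyclic subgroup of $\X{\mathfrak{A}}[p^s]$ transverse to $\mu_{p^s}$, matching the $p^{s-1}(p-1)$ distinct roots $\zeta_{p^s}^u t^{1/p^s}$. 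To isolate the precise factor $\zeta_{p^s}^u$, one traces the specific generator $\zeta_{p^s}^{-u}\gamma_{p^s}$ of $\mathcal{C}_u$ through the Drinfeld-lift computation above: once a normalization fixing $t^{1/p^s}$ is chosen, the $\zeta_{p^s}^{-u}$ admixture in the generator of $\mathcal{C}_u$ manifests as a multiplicative shift by $\zeta_{p^s}^u$ in the resulting $p^s$-th root of $t$, by $\mu_{p^s}$-equivariance of the bijection in (\ref{etale}). This last bookkeeping -- matching generator to root of unity -- is the main technical obstacle, but the substantive identity $t(\boldsymbol{\mathcal{E}}/\mathcal{C})^{p^s}=t$ is already captured by the diagrammatic argument, so the remaining refinement is purely normalization-dependent.
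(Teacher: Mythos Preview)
Your argument for the first assertion $t(\boldsymbol{\mathcal{E}}/\mathcal{C})^{p^s}=t$ is essentially the paper's: the same commutative diagram and the same Drinfeld-lift manipulation. That part is fine.

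The gap is in the refinement. You describe the identification of the specific $p^s$-th root as ``purely normalization-dependent bookkeeping'' and appeal to an unspecified $\mu_{p^s}$-equivariance of the bijection~(\ref{etale}). This is not a proof: you have not actually computed which root of unity appears, only argued that the answer must be \emph{some} $\zeta_{p^s}^{?}t^{1/p^s}$. The paper's method is concrete and you should supply it. First, one \emph{defines} the symbol $t^{1/p^s}$ by declaring $t(\boldsymbol{\mathcal{E}}/\mathcal{C})=t^{1/p^s}$ in the case $C=X(\mathfrak{A})[\bar{\mathfrak{p}}^s]$ (i.e.\ $u=0$); this is the normalization. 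Then for general $C_u=\langle\zeta_{p^s}^{-u}\gamma_{p^s}\rangle$ one \emph{specializes the whole diagram at the fiber $t=1$}, where the top row becomes the connected--\'etale sequence for the CM curve $X(\mathfrak{A})$, which is \emph{split} by complex multiplication with an explicit section $\mathscr{S}$. One then computes directly
\[
t_{\bar{\mathfrak{p}}^{-s}\mathfrak{A}}(X(\mathfrak{A})/C_u)=\varprojlim_n\,``p^n"\,\pi\bigl(i_{\mathfrak{A}}^{*-1}(1/p^{n+s})\bigr)=\varprojlim_n\,p^n\pi\bigl(\mathscr{S}(1/p^{n+s})\bigr)=\varprojlim_n\,\pi(\gamma_{p^s})=\zeta_{p^s}^u,
\]
the last equality because $\gamma_{p^s}\equiv\zeta_{p^s}^u\pmod{C_u}$. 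This pins down the constant, and comparison with the $u=0$ normalization gives $t(\boldsymbol{\mathcal{E}}/\mathcal{C}_u)=\zeta_{p^s}^u t^{1/p^s}$. Note also that the Serre--Tate coordinate here is taken around the \emph{shifted} origin $x(\bar{\mathfrak{p}}^{-s}\mathfrak{A})$, a point you do not address; this is why the paper writes $t_{\bar{\mathfrak{p}}^{-s}\mathfrak{A}}$ in the specialization.

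A minor correction: your count of $p^{s-1}(p-1)$ cyclic subgroups transverse to $\mu_{p^s}$ is off. There are $p^s$ such subgroups, indexed by all $u\in\mathbb{Z}/p^s\mathbb{Z}$ (the generator $\zeta_{p^s}^{-u}\gamma_{p^s}$ has order $p^s$ regardless of $\gcd(u,p)$, since $\gamma_{p^s}$ does). The restriction to $\gcd(u,p)=1$ appears only later in Proposition~\ref{Serre--Tate}, where one wants the quotient lattice to be a \emph{proper} $R_{N_{ns}p^s}$-ideal.
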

\begin{proof} 
We have a commutative diagram with exact rows
\[ \begin{CD}
  @.       @.             \mathcal{C}                       @.   @.  \\
  @.       @.             @VVV                  @.   @.  \\
0 @>>>  \mu_{p^\infty} @>i_{\mathcal{E}}>> \mathcal{E}[p^\infty] @>i^*_{\mathcal{E}}>>  \mathbb{Q}_p/\mathbb{Z}_p @>>> 0  \\
  @.        @|          @VV\pi V         @VVp^sV      \\
0 @>>>  \mu_{p^\infty} @>i_{\mathcal{E}/\mathcal{C}}>> \mathcal{E}/\mathcal{C}[p^\infty] @>i^*_{\mathcal{E}/\mathcal{C}}>>  \mathbb{Q}_p/\mathbb{Z}_p @>>> 0 
\end{CD} \qquad . \]
Then
\[t(\mathcal{E}/\mathcal{C})= \varprojlim_n ``{p^n}"i_{\mathcal{E}/\mathcal{C}}^{*-1}\left(\frac{1}{p^n}\right) = \varprojlim_n ``{p^n}"i_{\mathcal{E}}^{*-1}\left(\frac{1}{p^{n+s}}\right) \]
from which we read
\[t(\mathcal{E}/\mathcal{C})^{p^s}=\left(\varprojlim_n ``{p^n}"i_{\mathcal{E}}^{*-1}\left(\frac{1}{p^{n+s}}\right)\right)^{p^s}=\varprojlim_n ``{p^{n+s}}"i_{\mathcal{E}}^{*-1}\left(\frac{1}{p^{n+s}}\right)=t\]
We choose a $p^s$-th root $t^{1/p^s}$ so that $t(\mathcal{E}/\mathcal{C})=t^{1/p^s}$ when $\mathcal{C}$ corresponds to $X(\mathfrak{A})[\bar{\mathfrak{p}}^s]$ in (\ref{etale}).

If $\mathcal{C}$ corresponds to $C=\langle \zeta_{p^n}^{-u}\gamma_{p^n} \rangle$ in (\ref{etale}), then specializing at the fiber at $t=1$, we can read off which exact $p^s$-th root of $t$ is $t(\mathcal{E}/\mathcal{C})$. The above commutative diagram at the fiber at $t=1$ is
\[\xymatrixcolsep{3pc} \xymatrix{ & & C \ar[d] & \\
0 \ar[r] & \mu_{p^\infty} \ar[r]^-{i_{\mathfrak{A}}} \ar@{=}[d] & X(\mathfrak{A})[p^\infty] \ar[r]^-{i^*_{\mathfrak{A}}} \ar[d]^-\pi & \mathbb{Q}_p/\mathbb{Z}_p \ar[d]^-{p^s} \ar@/^1pc/[l]^-{\mathscr{S}} \ar[r] & 0 \quad .\\
0 \ar[r] & \mu_{p^\infty} \ar[r]^-{i_{X(\mathfrak{A})/C}} & X(\mathfrak{A})/C[p^\infty] \ar[r]^-{i^*_{X(\mathfrak{A})/C}} & \mathbb{Q}_p/\mathbb{Z}_p  \ar[r] & 0}
\]
By Shimura's reciprocity law, the Verschiebung map moves the origin $x(\mathfrak{A})$ of the Serre--Tate deformation space to $x(\bar{\mathfrak{p}}^{-s}\mathfrak{A})$ and we conclude
{\allowdisplaybreaks
\begin{align} \nonumber
t_{\bar{\mathfrak{p}}^{-s}\mathfrak{A}}(X(\mathfrak{A})/C)&=\varprojlim_n ``{p^n}"i_{X(\mathfrak{A})/C}^{*-1}\left(\frac{1}{p^n}\right) 
=\varprojlim_n ``{p^n}"\pi\left(i_{\mathfrak{A}}^{*-1}\left(\frac{1}{p^{n+s}}\right)\right) \nonumber \\
&= \varprojlim_n {p^n}\pi\left(\mathscr{S}\left(\frac{1}{p^{n+s}}\right)\right) 
= \varprojlim_n \pi(\gamma_{p^s})= \varprojlim_n \zeta^u_{p^s}  \nonumber \\
&= \zeta^u_{p^s} \nonumber
\end{align} }
This concludes the proof of the lemma.
\end{proof} 
Specializing the last two lemmas at the fiber at $t=1$ and combining them with Proposition \ref{adelic} we obtain 
\begin{Serre--Tate}\label{Serre--Tate}
Let $C\subset \X{\mathfrak{A}}[p^n]$ be a rank $p^n$ finite flat subgroup scheme of $\X{\mathfrak{A}}[p^n]$ \'etale over $\mathcal{W}[\mu_{p^n}]$ that is \'etale locally isomorphic to $\mathbb{Z}/p^n\mathbb{Z}$ after faithfully flat extension of scalars, $n\geq 1$. Then
\begin{itemize}
\item[(Frob)] $t_{\mathfrak{p}^{-n}\mathfrak{A}}=t_{\mathfrak{A}}^{p^n}$
\item[(Ver)] 
\begin{enumerate}
\item $t_{\bar{\mathfrak{p}}^{-n}\mathfrak{A}}=t_{\mathfrak{A}}^{1/p^n}$
\item If $C=\langle\zeta_{p^n}^{-u}\gamma_{p^n}\rangle$ for some $1\leq u \leq p^n-1$ such that $\mathrm{gcd}(u,p)=1$, then $t_{\bar{\mathfrak{p}}^{-n}\mathfrak{A}}(\X{\tilde{\mathfrak{a}}_u})=\zeta_{p^n}^u$ where $\tilde{\mathfrak{a}}_u$ is a proper $R_{N_{ns}p^n}$-ideal  such that $x(\mathfrak{A})/C=x(\mathfrak{A})\circ \bigl(\begin{smallmatrix} p^n & u \\ 0 & 1 \end{smallmatrix} \bigr)=x(\tilde{\mathfrak{a}}_u)$ and $\tilde{\mathfrak{a}}_uR_{N_{ns}}=\bar{\mathfrak{p}}^{-n}\mathfrak{A}R_{N_{ns}}$.
\end{enumerate}
\end{itemize}
\end{Serre--Tate}
The point here is that by Shimura's reciprocity law, Frobenius and Verschiebung maps move the origin $x(\mathfrak{A})$ of the Serre--Tate deformation space to $x(\mathfrak{p}^{-n}\mathfrak{A})$ and $x(\bar{\mathfrak{p}}^{-n}\mathfrak{A})$, respectively.

\section{Measure associated to a Hecke eigen-cusp form}

We recall notation from Section \ref{sec:hecke}. Let $f_0\in S_k(\Gamma_0(N_0),\psi)$ be a normalized Hecke newform of conductor $N_0$, nebentypus $\psi$ and let  $\mathbf{f}_0$ be the corresponding adelic form with central character $\boldsymbol{\psi}$. Let $f$ be a suitable normalized Hecke eigen-cusp form that will be explicitly made out of $f_0$ in Section \ref{sec:mainthm} such that its arithmetic lift $\mathbf{f}$ is in the automorphic representation $\pi_{\mathbf{f}_0}$ generated by the unitarization $\mathbf{f}_0^u$.
Following Katz, we construct a $W$-valued measure $\mathrm{d}\mu_f$ on $\mathrm{Cl}_M^-(N_{ns}p^\infty)$ interpolating values of $f$ at CM points we constructed in Section \ref{subsec:cm} and whose behavior under Hecke action is studied in Section \ref{sec:hecke}. 

\subsection{Interlude on $p$-adic measures on $\mathbb{Z}_p$} We recall some basic facts from the theory of $p$-adic measures on $\mathbb{Z}_p$ (\cite{LFE} Section 3.5). For a $p$-adic measure $\mu$ on $\mathbb{Z}_p$ having values in $W$ we can define corresponding formal power series $\Phi_\mu(t)$ by
\[\Phi_\mu(t)=\sum_{n=0}^\infty \left(\int_{\mathbb{Z}_p}\binom{x}{n}\mathrm{d}\mu(x)\right)T^n \quad (T=t-1)\;.\]
Essentially by Mahler's theorem, the measure $\mu$ is determined by the formal power series $\Phi_\mu(t)$ and $\mu\mapsto\Phi_\mu$ induces an isomorphism $\mathscr{M}(\mathbb{Z}_p;W)\cong W[[T]]$ between space $\mathscr{M}(\mathbb{Z}_p;W)$ of all $W$-valued measures on $\mathbb{Z}_p$ and ring of formal power series $W[[T]]$. It is then easy to verify that
\[
\int_{\mathbb{Z}_p}x^m\mathrm{d}\mu=\left(t\frac{\mathrm{d}}{\mathrm{d}t}\right)^m\Phi_\mu|_{t=1}\quad \text{for all}\, m\geq0\,.
\]
The space of measures $\mathscr{M}(\mathbb{Z}_p;W)$ is naturally a module over the ring $\mathscr{C}(\mathbb{Z}_p;W)$ of continuous $W$-valued functions on $\mathbb{Z}_p$ in the following way: for $\phi\in\mathscr{C}(\mathbb{Z}_p;W)$ and $\mu \in \mathscr{M}(\mathbb{Z}_p;W)$
\[ \int_{\mathbb{Z}_p}\varphi\mathrm{d}(\phi\mu)=\int_{\mathbb{Z}_p}\varphi(x)\phi(x)\mathrm{d}\mu(x)\quad \text{for all}\, \varphi\in\mathscr{C}(\mathbb{Z}_p;W)\,. \]
In particular, if $\phi\in\mathscr{C}(\mathbb{Z}_p;W)$ is a locally constant function factoring through $\mathbb{Z}_p/p^n\mathbb{Z}_p$, we have 
\begin{equation}\label{average}
\Phi_{\phi\mu}=[\phi]\Phi_\mu \text{ where }[\phi]\Phi_\mu(t)=p^{-n}\sum_{b\in\mathbb{Z}/p^n\mathbb{Z}}\phi(b)\sum_{\zeta\in\mu_{p^n}}\zeta^{-b}\Phi_\mu(\zeta t)
\end{equation}
and
\begin{equation}\label{twmoment}
\int_{\mathbb{Z}_p}\phi(x)x^m\mathrm{d}\mu(x)=\left(t\frac{\mathrm{d}}{\mathrm{d}t}\right)^m([\phi]\Phi_\mu)|_{t=1}\;.
\end{equation}

\subsection{Construction of measure}\label{subsec:measure}

Let $\mathfrak{A}$ be a proper $R_{N_{ns}}$-ideal prime to $p$. Classical modular forms are defined over a number field, so we may assume that $f$ is defined over a localization $\mathcal{V}$ of a ring of integers of a certain number field. We take a finite extension of $W$ generated by $\mathcal{V}$ and, abusing the symbol, we keep denoting it $W$. Then $(d^mf)(x(\mathfrak{A}),\omega_p(\mathfrak{A}))\in W$ by rationality result of Katz. We define a $W$-valued measure $\mathrm{d}\mu_{f,\mathfrak{A}}$ on $\mathbb{Z}_p$ by
\[\int_{\mathbb{Z}_p}\binom{x}{n}\mathrm{d}\mu_{f,\mathfrak{A}}(x)=\binom{d}{n}f(x(\mathfrak{A}),\omega_p(\mathfrak{A})) \quad \text{for all}\, n\geq0\,. \]
Then clearly
\[ \int_{\mathbb{Z}_p}x^m\mathrm{d}\mu = (d^mf)(x(\mathfrak{A}),\omega_p(\mathfrak{A})) \quad \text{for all}\, m\geq0\,. \]

We make an observation in the form of the following
\begin{taylor}\label{taylor}
$\Phi_{\mu_{f,\mathfrak{A}}}(t)$ is the $t$-expansion of $f$ with respect to Serre--Tate coordinate $t$ around point $x(\mathfrak{A})$.
\end{taylor}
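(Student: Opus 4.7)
The plan is to compare $\Phi_{\mu_{f,\mathfrak{A}}}(t)$ and the $t$-expansion of $f$ around $x(\mathfrak{A})$ coefficient by coefficient. By the $t$-expansion principle (t-exp), writing the $t$-expansion as $f(\boldsymbol{\mathcal{E}},\boldsymbol{\eta})=\sum_{n\geq 0}c_nT^n\in W[[T]]$ with $T=t-1$, the claim reduces to verifying that
\[
c_n=\binom{d}{n}f(x(\mathfrak{A}),\omega_p(\mathfrak{A}))\qquad\text{for every } n\geq 0,
\]
since the right-hand side is, by definition, the coefficient of $T^n$ in $\Phi_{\mu_{f,\mathfrak{A}}}(t)$.

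The crucial input will be the normalization $d=t\tfrac{d}{dt}$ from (\ref{katzdiff}): on the Serre--Tate formal neighborhood of $x(\mathfrak{A})$, the Katz operator $d$ acts on the series $f(t)$ as honest differentiation in the canonical coordinate. This reduces the proposition to the purely formal identity
\[
\binom{t\tfrac{d}{dt}}{n}g(t)\bigg|_{t=1}=c_n\qquad\text{whenever } g(t)=\sum_{j\geq 0}c_jT^j\in W[[T]],
\]
which I would verify on the basis $\{t^k\}_{k\geq 0}$: from $(t\tfrac{d}{dt})t^k=kt^k$ one gets $\binom{t\tfrac{d}{dt}}{n}t^k\big|_{t=1}=\binom{k}{n}$, which is precisely the $T^n$-coefficient of $t^k=(1+T)^k$. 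By linearity and $T$-adic continuity the identity extends to all of $W[[T]]$.

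Specializing the identity to $g(t)=f(t)$ gives $c_n=\binom{d}{n}f(t)\big|_{t=1}$, and what remains is the identification $f(t)\big|_{t=1}=f(\X{\mathfrak{A}},\omega_p(\mathfrak{A}))$, which I would flag as the main (though mild) bookkeeping point. It holds because the fiber of $(\boldsymbol{\mathcal{E}},\boldsymbol{\eta})$ at $t=1$ is $(\X{\mathfrak{A}},\eta(\mathfrak{A}))$ (as recorded earlier via $t(\X{R})=1$ and its generalization to $\mathfrak{A}$), and evaluating a $p$-adic modular form on this pair uses the invariant differential induced by the trivialization coming from $\ord{\mathfrak{A}}$, which by the construction recalled at the end of Section \ref{p-adicmf} is exactly $\omega_p(\mathfrak{A})$. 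Combining the two steps then yields $c_n=\binom{d}{n}f(x(\mathfrak{A}),\omega_p(\mathfrak{A}))$, and hence the proposition. No serious obstacle beyond this bookkeeping is expected.
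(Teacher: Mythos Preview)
Your proof is correct and follows essentially the same route as the paper's. The only cosmetic difference is in how the key formal identity is verified: the paper invokes the operator identity $\binom{t\tfrac{d}{dt}}{n}=\tfrac{t^n}{n!}\tfrac{d^n}{dt^n}$ (cited from \cite{LFE}) and then reads off the Taylor coefficient, while you establish the equivalent fact $\binom{t\tfrac{d}{dt}}{n}g(t)\big|_{t=1}=c_n$ by checking it on the eigenbasis $\{t^k\}$ and passing to the limit; both arguments amount to the same elementary computation.
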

\begin{proof}
One can easily verify elementary identity $\binom{t\frac{\mathrm{d}}{\mathrm{d}t}}{n}=\frac{t^n}{n!}\frac{\mathrm{d}^n}{\mathrm{d}t^n}$ (\cite{LFE} Lemma 3.4.1 on page 80). The proposition then follows from (\ref{katzdiff}) and the fact that $t(\X{\mathfrak{A}})=1$. Indeed, if $T=t-1$, we have
\[f(t)=\sum_{n=0}^\infty \frac{1}{n!}\frac{\mathrm{d}^nf}{\mathrm{d}t^n}(1)T^n = \sum_{n=0}^\infty \binom{t\frac{\mathrm{d}}{\mathrm{d}t}}{n}f(1)T^n= \sum_{n=0}^\infty \binom{d}{n}f(x(\mathfrak{A}),\omega_p(\mathfrak{A}))T^n= \sum_{n=0}^\infty \left( \int_{\mathbb{Z}_p}\binom{x}{n}\mathrm{d}\mu_{f,\mathfrak{A}}\right)T^n = \Phi_{\mu_{f,\mathfrak{A}}}(t).\]
\end{proof}

For $\alpha\in \mathrm{GL}_2^+(\mathbb{R})$ we define
\[f|_k\alpha=\mathrm{det}(\alpha)^{k/2}f(\alpha(z))(cz+d)^{-k}\]
Note that operator $|_k$ depends on the weight of the form, but since the weight will always be clear from the context we shall write it as $|$.

Note that for our purpose it suffices to work over Igusa tower $Ig_{N^{(p)}}$ over $\mathfrak{M}(\Gamma_1(N^{(p)}))$, where $N^{(p)}$ is prime-to-$p$ level. Thus, in characteristic 0, the modular form is on $\mathfrak{M}(\Gamma_1(N^{(p)}p^r))$ for some $r\geq 0$.
We are now ready to prove the following

\begin{shift}\label{shift}
For all $n\geq 1$ and every $1\leq u \leq p^n-1$ such that $\mathrm{gcd}(u,p)=1$, we have:
\begin{enumerate}
\item $\Phi_{\mu_{f,\mathfrak{A}}}(\zeta_{p^n}^ut)$ is $t$-expansion of $f|\bigl(\begin{smallmatrix} 1 & -up^{-n} \\ 0 & 1 \end{smallmatrix} \bigr)$ with respect to Serre--Tate coordinate $t$ around $x(\mathfrak{A})$.
\item $\Phi_{\mu_{f,\mathfrak{A}}}(\zeta_{p^n}^ut)|_{t=1}=f(x(\mathfrak{a}_u),\omega_p(\mathfrak{a}_u))$ where $x(\mathfrak{a}_u)= x(\mathfrak{A})\circ \bigl(\begin{smallmatrix} 1 & up^{-n} \\ 0 & 1 \end{smallmatrix} \bigr)$ on $Sh/\widehat{\Gamma}_1(N^{(p)}p^r)$ and $\mathfrak{a}_u$ is a proper $R_{N_{ns}p^n}$-ideal such that $\mathfrak{a}_uR_{N_{ns}}=\mathfrak{A}R_{N_{ns}}$. 
\end{enumerate}
\end{shift}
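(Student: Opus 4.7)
The strategy is to first identify $x(\mathfrak{a}_u)$ together with its Serre--Tate coordinate around $x(\mathfrak{A})$ by means of a matrix decomposition, then deduce (2) by specializing Proposition~\ref{taylor} at $t=\zeta_{p^n}^u$, and finally promote this to the power-series identity (1) by running the same argument with every Katz derivative $\binom{d}{n}f$ in place of $f$.

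For the identification, use the factorization
\[ \bigl(\begin{smallmatrix}1 & up^{-n}\\0 & 1\end{smallmatrix}\bigr) \;=\; \bigl(\begin{smallmatrix}p^{-n} & 0\\0 & 1\end{smallmatrix}\bigr)\bigl(\begin{smallmatrix}p^n & u\\0 & 1\end{smallmatrix}\bigr) \]
in $G(\mathbb{A}^{(\infty)})$. The left factor moves $x(\mathfrak{A})$ to $x(\bar{\mathfrak{p}}^n\mathfrak{A})$ by Proposition~\ref{adelic}(Ver)(1) (inverted), and the right factor then moves $x(\bar{\mathfrak{p}}^n\mathfrak{A})$ to $x(\mathfrak{a}_u)$ by Proposition~\ref{adelic}(Ver)(2), for a proper $R_{N_{ns}p^n}$-ideal $\mathfrak{a}_u$ with $\mathfrak{a}_u R_{N_{ns}}=\bar{\mathfrak{p}}^{-n}(\bar{\mathfrak{p}}^n\mathfrak{A})R_{N_{ns}}=\mathfrak{A} R_{N_{ns}}$, matching the asserted shape. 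Applying Proposition~\ref{Serre--Tate}(Ver)(2) with $\bar{\mathfrak{p}}^n\mathfrak{A}$ in place of $\mathfrak{A}$ simultaneously identifies $X(\mathfrak{a}_u)$, together with $\ord{\mathfrak{a}_u}$ and $\omega_p(\mathfrak{a}_u)$, with the fiber of the universal deformation $(\boldsymbol{\mathcal{E}},\boldsymbol{\eta})$ around $x(\mathfrak{A})$ at the Serre--Tate parameter $t=\zeta_{p^n}^u\in\widehat{\mathbb{G}}_m(W[\mu_{p^n}])$, equipped with the tautological trivialization and invariant differential.

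By Proposition~\ref{taylor}, $\Phi_{\mu_{f,\mathfrak{A}}}(t)=f(\boldsymbol{\mathcal{E}},\boldsymbol{\eta})$ on $\widehat{S}\cong\widehat{\mathbb{G}}_{m/W}$, so specializing at $t=\zeta_{p^n}^u$ yields (2). For (1), the same argument applied to the $p$-adic modular form $\binom{d}{n}f$ gives $\binom{d}{n}f(X(\mathfrak{a}_u),\omega_p(\mathfrak{a}_u))=\Phi_{\mu_{\binom{d}{n}f,\mathfrak{A}}}(\zeta_{p^n}^u)$; combining this with $d=t\,d/dt$ and the identity $\binom{td/dt}{n}=\tfrac{t^n}{n!}\tfrac{d^n}{dt^n}$ used in the proof of Proposition~\ref{taylor}, the common value is precisely the $n$-th Taylor coefficient in $T=t-1$ of $\Phi_{\mu_{f,\mathfrak{A}}}(\zeta_{p^n}^u t)$. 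On the other hand, since Katz's $d$ is $\widehat{S}$-invariant and hence commutes with the level-structure action by matrices, $\binom{d}{n}f(X(\mathfrak{a}_u),\omega_p(\mathfrak{a}_u))$ equals $\binom{d}{n}\bigl(f|\bigl(\begin{smallmatrix}1 & -up^{-n}\\0 & 1\end{smallmatrix}\bigr)\bigr)(x(\mathfrak{A}),\omega_p(\mathfrak{A}))$, which by a second invocation of Proposition~\ref{taylor} is the $n$-th Taylor coefficient of the $t$-expansion of $f|\bigl(\begin{smallmatrix}1 & -up^{-n}\\0 & 1\end{smallmatrix}\bigr)$ around $x(\mathfrak{A})$. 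Matching coefficients in all degrees proves (1).

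The main technical delicacy lies in verifying that the level structures and invariant differential attached to $X(\mathfrak{a}_u)$ in Section~\ref{subsec:cm} via the \'etale quotient by $C=\langle\zeta_{p^n}^{-u}\gamma_{p^n}\rangle$ coincide with those received along the Serre--Tate deformation; this is guaranteed by the canonical identifications $\widehat{Sh}^{(p)}_{x/W}\cong\widehat{S}\cong\widehat{Ig}$ recorded earlier, which propagate the prime-to-$p$ and $p$-ordinary level data uniquely along any deformation by smoothness of $\boldsymbol{\mathcal{E}}$ and \'etaleness of both $\boldsymbol{\mathcal{E}}[N^{(p)}]$ and $\boldsymbol{\mathcal{E}}[p^\infty]^{\mathrm{\acute{e}t}}$.
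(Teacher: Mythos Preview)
Your identification of $x(\mathfrak{a}_u)$ and its Serre--Tate coordinate via the factorization $\bigl(\begin{smallmatrix}1 & up^{-n}\\0 & 1\end{smallmatrix}\bigr)=\bigl(\begin{smallmatrix}p^{-n} & 0\\0 & 1\end{smallmatrix}\bigr)\bigl(\begin{smallmatrix}p^n & u\\0 & 1\end{smallmatrix}\bigr)$ is valid, and your derivation of (2) from it via Proposition~\ref{taylor} is correct. The paper reaches the same conclusion through the different factorization $\bigl(\begin{smallmatrix}1 & 0\\0 & p^n\end{smallmatrix}\bigr)\bigl(\begin{smallmatrix}p^n & u\\0 & 1\end{smallmatrix}\bigr)=\bigl(\begin{smallmatrix}p^n & 0\\0 & p^n\end{smallmatrix}\bigr)\bigl(\begin{smallmatrix}1 & up^{-n}\\0 & 1\end{smallmatrix}\bigr)$, reading the Serre--Tate effect $t\mapsto\zeta_{p^n}^u t$ off the left side (Frobenius followed by Verschiebung) and noting that the central scalar on the right fixes $Sh$ pointwise.

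However, your argument for (1) has a genuine gap. The assertion you need is that, for every $p$-adic modular form $h$ (in particular $h=\binom{d}{n}f$), one has $h(x(\mathfrak{a}_u),\omega_p(\mathfrak{a}_u)) = \bigl(h|\bigl(\begin{smallmatrix}1 & -up^{-n}\\0 & 1\end{smallmatrix}\bigr)\bigr)(x(\mathfrak{A}),\omega_p(\mathfrak{A}))$. Here the slash on the right is the \emph{classical archimedean} action $h\mapsto h|\alpha_\infty^{-1}$ with $\alpha=\bigl(\begin{smallmatrix}1 & up^{-n}\\0 & 1\end{smallmatrix}\bigr)$ (this is what the statement means, as is clear from the $q$-expansion computation in Proposition~\ref{key}), while the left side involves the $p$-adic level-structure action $x\mapsto x\circ\alpha_p$. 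Your justification ``$d$ is $\widehat{S}$-invariant and hence commutes with the level-structure action'' at best yields that $d$ commutes with the $p$-adic action $x\mapsto x\circ\alpha_p$; it does not identify that action with the classical slash $|\alpha_\infty^{-1}$.

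This identification is exactly the ``subtle point'' the paper works out: one passes to the adelic lift, uses that $\alpha\in G(\mathbb{Q})$ acts trivially on $\delta_k^m\mathbf{f}$ together with $(\alpha^{(p\infty)})^{-1}\in\widehat{\Gamma}_1(N^{(p)}p^r)$ to obtain $\delta_k^m f\bigl((x,\omega_\infty(x))\circ\alpha_p\bigr)=\delta_k^m(f|\alpha_\infty^{-1})(x,\omega_\infty(x))$, and then applies the Katz--Shimura rationality result~(\ref{K-S}) and the $t$-expansion principle to descend to the $p$-adic side. Without this bridge between the $p$-adic and archimedean actions your coefficient-matching argument for (1) is circular: it presupposes precisely the compatibility you are trying to establish.
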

\begin{proof}
Each point $x$ in the infinitesimal neighborhood of a CM point $x(\mathfrak{A})$ in the Igusa tower $Ig_{N^{(p)}}$ over $\mathfrak{M}(\Gamma_1(N^{(p)}))$ is determined by two data: the Serre--Tate coordinate $t(x)=t_{\mathfrak{A}}$ and the prime-to-$p$ level structure $\eta^{(p)}(\mathfrak{A})$ depending on the center point $x(\mathfrak{A})$. The action of $g\in G(\mathbb{A}^{(\infty)})$ has two aspects, one is moving the Serre--Tate coordinate $t$, and another is action on the level structure given by $\eta\mapsto\eta\circ g$. Note that if $g_p=1$, i.e. $g \in G(\mathbb{A}^{(p\infty)})$, then $g$ preserves $t$ (as the coordinate of $\widehat{\mathbb{G}}_m$) but possibly moves $x(\mathfrak{A})$ to a different point on $Sh$.

Let $\widetilde{p}$ be global $\widetilde{p}=(p,\ldots,p)\in \mathbb{Q}^\times \subset \mathbb{A}^\times$ and write $\widetilde{p}_p$ for local $\widetilde{p}_p=(1,\ldots,1,\stackrel{p}{p},1,\ldots,1)\in \mathbb{Q}_p^\times \subset \mathbb{A}^\times$. 
We view $u\in \mathbb{Z}_p^\times$ as an element of $\mathbb{A}^\times$ being nontrivial at $p$ and having zeros outside $p$ (i.e. $u_l=0$ if $l\not = p$).  Consider the action on the Serre--Tate deformation space $\widehat{S}_{x(\mathfrak{A})}$ of $x(\mathfrak{A})$, which is the infinitesimal neighborhood of  $x(\mathfrak{A})$ in the Igusa tower $Ig_{N^{(p)}}$, of the following product 
\begin{equation} \label{eq:maneuver}
\bigl(\begin{smallmatrix} 1 & 0 \\ 0 & \widetilde{p}^n \end{smallmatrix} \bigr) \bigl(\begin{smallmatrix}  \widetilde{p}^n & u \\ 0 & 1 \end{smallmatrix} \bigr)= \bigl(\begin{smallmatrix} \widetilde{p}^n & 0 \\ 0 & \widetilde{p}^n \end{smallmatrix} \bigr)\bigl(\begin{smallmatrix} 1 & up^{-n} \\ 0 & 1 \end{smallmatrix} \bigr)\,.
\end{equation}
The effect on the Serre--Tate coordinate $t$ can be read off from the left hand side and is precisely $t\mapsto \zeta_{p^n}^ut$ by Proposition \ref{Serre--Tate}. Note that the central element $\widetilde{p}^n\in Z(\mathbb{Q})$ on the right hand side fixes $Sh$ point by point, and the unipotent $\bigl(\begin{smallmatrix} 1 & up^{-n} \\ 0 & 1 \end{smallmatrix} \bigr)$  fixing $\eta_p^{\mathrm{ord}}(\mathfrak{A})$ preserves $\widehat{S}_{x(\mathfrak{A})}$ as a whole space, even though it moves points within it.  That being said, the passage of Serre--Tate coordinate from $t$ to $\zeta_{p^n}^ut$ is solely caused by $\bigl(\begin{smallmatrix} 1 & up^{-n} \\ 0 & 1 \end{smallmatrix} \bigr)$. 

Let $x=(X,\eta^{(p)}, \eta^{\mathrm{\acute{e}t}}\times \eta^{\mathrm{ord}})$ be a general test object that gives rise to a point in the ordinary locus of $Sh$, and let $\omega_p(x)$ be the invariant differential induced by $\eta^{\mathrm{ord}}$ as in Section \ref{diffop}. It follows that $\Phi_{\mu_{f,\mathfrak{A}}}(\zeta_{p^n}^ut)$ is $t$-expansion around $x(\mathfrak{A})$ of the $p$-adic modular form given by
 \[f': x\mapsto f((x,\omega_p(x))\circ \bigl(\begin{smallmatrix} 1 & up^{-n} \\ 0 & 1 \end{smallmatrix} \bigr))\]
In order to complete the proof of (1), we are left to prove a subtle point that this $p$-adic modular form is nothing but $f|\bigl(\begin{smallmatrix} 1 & up^{-n} \\ 0 & 1 \end{smallmatrix} \bigr)^{-1}$. Let $\widetilde{u}$ be global $\widetilde{u}=(u,\ldots,u)\in \mathbb{Q}^\times \subset \mathbb{A}^\times$. If we set $\alpha=\bigl(\begin{smallmatrix} 1 & \widetilde{u}\widetilde{p}^{-n} \\ 0 & 1 \end{smallmatrix}\bigr)\in G(\mathbb{Q})\subset G(\mathbb{A})$ then the crux of the proof is the following identity that holds for all $m\geq 0$:
\begin{equation} \label{subtle}
\delta_k^mf((x,\omega_\infty(x))\circ \alpha_p) = \delta_k^m (f|\alpha_\infty^{-1})(x ,\omega_\infty(x))\;.
\end{equation}
To verify it, recall that if $x\in Sh$ corresponds to $[z,g^{(\infty)}]$ in $Sh(\mathbb{C})$, for some $z\in\mathfrak{X}$ and $g^{(\infty)}\in  G(\mathbb{A}^{(\infty)})$, then by definition
\[\delta_k^mf(x ,\omega_\infty(x))=\delta_k^mf([z,g^{(\infty)}])=\delta_k^m\mathbf{f}(g)j(g_\infty, \mathbf{i})^{k+2m} \]
where $g_\infty \in G(\mathbb{R})$ is such that $g_\infty(\mathbf{i})=z$ and $g=g^{(\infty)}g_\infty$.
Since in characteristic 0 the modular form $f$ is on $\mathfrak{M}(\Gamma_1(N^{(p)}p^r))$, we only need to check identity over $Sh/\widehat{\Gamma}_1(N^{(p)}p^r)$; so without loss of generality we may assume that $g^{(\infty)}=1$.
{\allowdisplaybreaks \begin{align} \nonumber
\delta_k^mf((x ,\omega_\infty(x))\circ \alpha_p)&=\delta_k^m f([z,1]\circ \alpha_p) = \delta_k^m f([z,\alpha_p]) \nonumber \\
&= \delta_k^m\mathbf{f}( \alpha_p g_\infty ) j(g_\infty, \mathbf{i} )^{k+2m} \nonumber \\
&= \delta_k^m\mathbf{f}\left(\alpha (\alpha^{(p\infty)})^{-1} \alpha_\infty^{-1}g_\infty \right) j(g_\infty, \mathbf{i} )^{k+2m}  \nonumber \\
&=  \delta_k^m\mathbf{f} \left(\alpha_\infty^{-1}g_\infty \right) j(g_\infty, \mathbf{i} )^{k+2m} \nonumber \\
&=  \delta_k^m f (\alpha_\infty^{-1}g_\infty(\mathbf{i}))j(\alpha_\infty^{-1}g_\infty,\mathbf{i})^{-k-2m} j(g_\infty, \mathbf{i} )^{k+2m} \nonumber \\
&=  \delta_k^m f (\alpha_\infty^{-1}(z))j(\alpha_\infty^{-1},z)^{-k-2m}  \nonumber \\
&=  (\delta_k^m f)|\alpha_\infty^{-1}(z) =  \delta_k^m (f|\alpha_\infty^{-1})([z,1]) \nonumber \\
&=  \delta_k^m (f|\alpha_\infty^{-1})(x ,\omega_\infty(x)) \nonumber 
\end{align} }
as desired. Here we used a general fact $(\delta_k^mf)|\alpha_\infty^{-1}=\delta_k^m(f|\alpha_\infty^{-1})$ and an obvious fact that for this particular $\alpha$ we have $(\alpha^{(p\infty)})\in\widehat{\Gamma}_1(N^{(p)}p^r)$.

By the Katz--Shimura rationality result (\ref{K-S}) we have
\[ \frac{d^mf'(x,\omega_p(x))}{\Omega_p^{k+2m}}=\frac{d^mf((x,\omega_p(x))\circ\alpha_p)}{\Omega_p^{k+2m}} = \frac{\delta_k^mf((x ,\omega_\infty(x))\circ \alpha_p)}{\Omega_\infty^{k+2m}}=\frac{\delta_k^m (f|\alpha_\infty^{-1})(x ,\omega_\infty(x))}{\Omega_\infty^{k+2m}} = \frac{d^m (f|\alpha_\infty^{-1})(x ,\omega_p(x))}{\Omega_p^{k+2m}} \]
which yields
\[ d^mf'(x,\omega_p(x))= d^m (f|\alpha_\infty^{-1}) \text{ for all } m\geq 0\;.\]
This implies that $f'$ and $f|\alpha_\infty^{-1}$ have the same $t$-expansion (see statement (4.6) in \cite{minv}) and by the $t$-expansion principle we conclude $f'=f|\alpha_\infty^{-1}$ as desired. 

To prove (2), we first note that the action of $\bigl(\begin{smallmatrix} \widetilde{p}^n & u \\ 0 & 1\end{smallmatrix} \bigr)$  and 
$\bigl(\begin{smallmatrix} \widetilde{p}_p^n & u \\ 0 & 1 \end{smallmatrix} \bigr)$ are identical on $\mathfrak{M}(\Gamma_1(N^{(p)}p^r))$ due to coset identity
\[\bigl(\begin{smallmatrix} \widetilde{p}^n & u \\ 0  & 1\end{smallmatrix} \bigr)\widehat{\Gamma}_1(N^{(p)}p^r) = \bigl(\begin{smallmatrix} \widetilde{p}_p^n & u \\ 0 & 1 \end{smallmatrix} \bigr) \widehat{\Gamma}_1(N^{(p)}p^r) \]
that holds for any $r\geq 0$.

By using Proposition \ref{adelic}, we can track down the effect of the action of the left hand side of (\ref{eq:maneuver}) on $x(\mathfrak{A})$. Regarding the matrix $\bigl(\begin{smallmatrix} 1 & 0 \\ 0 & \widetilde{p}^n \end{smallmatrix} \bigr)$, note that $\bigl(\begin{smallmatrix} 1 & 0 \\ 0 & \widetilde{p}_p^n \end{smallmatrix} \bigr)$ sends $x(\mathfrak{A})=(X(\mathfrak{A}), \eta(\mathfrak{A}))$ to  $x(\mathfrak{p}^{-n}\mathfrak{A})=(X(\mathfrak{p}^{-n}\mathfrak{A}),\eta(\mathfrak{p}^{-n}\mathfrak{A}))$, which is then further moved by $\bigl(\begin{smallmatrix} 1 & 0 \\ 0 & (\widetilde{p}^n)^ {(p)}\end{smallmatrix} \bigr)\in G(\widehat{\mathbb{Z}}^{(p)})$ to some $(X(\mathfrak{p}^{-n}\mathfrak{A}),\eta'(\mathfrak{p}^{-n}\mathfrak{A}))$.  Dealing with the Igusa tower $Ig_{N^{(p)}}$ over $\mathfrak{M}(N^{(p)})$, we can switch from global $\bigl(\begin{smallmatrix} \widetilde{p}^n & u \\ 0  & 1\end{smallmatrix} \bigr)$ to local $\bigl(\begin{smallmatrix} \widetilde{p}_p^n & u \\ 0 & 1 \end{smallmatrix} \bigr)$ as explained above, and the Proposition \ref{adelic} yields that the action of $\bigl(\begin{smallmatrix} \widetilde{p}_p^n & u \\ 0 & 1 \end{smallmatrix} \bigr)$ sends $(X(\mathfrak{p}^{-n}\mathfrak{A}),\eta'(\mathfrak{p}^{-n}\mathfrak{A}))$ to some $x(\mathfrak{a}_u)=(X(\mathfrak{a}_u),\eta(\mathfrak{a}_u))$ on $Sh/\widehat{\Gamma}_1(N^{(p)}p^r)$ as desired.
\end{proof}

Recall that for any function $\phi:\mathbb{Z}/p^n\mathbb{Z}\to\mathbb{C}$, we define its Fourier transform $\phi^*:\mathbb{Z}/p^n\mathbb{Z}\to\mathbb{C}^\times$ by $\phi^*(x)=\sum_{u\in\mathbb{Z}/p^n\mathbb{Z}}\phi(u)\mathbf{e}(xu/p^n)$, where $\mathbf{e}(x)=\mathrm{exp}(2\pi\mathbf{i}x)$. The twist of $f$ by $\phi$ is given by
\[f|\phi=p^{-n}\sum_{u\in\mathbb{Z}/p^n\mathbb{Z}}\phi^*(-u)f|\bigl(\begin{smallmatrix} 1 & up^{-n} \\ 0 & 1 \end{smallmatrix} \bigr)\]
and is a Hecke eigen-cusp form in $S_k(\Gamma_0(Np^{2n}),\psi\phi^2)$ whose $q$-expansion is given by
\[f|\phi(z)=\sum_{j\geq1}\phi(j)a(j,f)q^n\]
where $q=\mathrm{exp}(2\pi\mathbf{i}z)$.

We now prove the key proposition.
\begin{key}\label{key}
For any function $\phi:\mathbb{Z}/p^n\mathbb{Z}\to\mathbb{C}$, $n\geq 1$,  we have 
\[ [\phi]\Phi_{\mu_{f,\mathfrak{A}}}=\Phi_{\mu_{f|\phi^-,\mathfrak{A}}}\]
where $\phi^-:\mathbb{Z}/p^n\mathbb{Z}\to\mathbb{C}$ is defined by $\phi^-(x):=\phi(-x)$.
In particular, if $\phi (\mathbb{Z}/p^n\mathbb{Z})^\times\to\mathbb{C}$ is a primitive Dirichlet character of conductor $p^n$, and we extend $\phi$ and $\phi^{-1}$ to be $0$ outside $(\mathbb{Z}/p^n\mathbb{Z})^\times$, we have
\[ \int_{\mathbb{Z}_p}\phi(x)x^m\mathrm{d}\mu_{f,\mathfrak{A}}(x) = G(\phi)\sum_{u\in (\mathbb{Z}/p^n\mathbb{Z})^\times}\phi^{-1}(-u)d^mf(x(\mathfrak{a}_u),\omega_p(\mathfrak{a}_u))\]
where $G(\phi)$ is the Gauss sum and $\mathfrak{a}_u$'s for $u\in (\mathbb{Z}/p^n\mathbb{Z})^\times$ are proper $R_{N_{ns}p^n}$-ideals as in part (2) of Lemma \ref{shift}.
\end{key}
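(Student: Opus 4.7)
The plan is to combine Proposition \ref{taylor} (identifying $\Phi_{\mu_{f,\mathfrak{A}}}$ with the $t$-expansion of $f$ around $x(\mathfrak{A})$) with Lemma \ref{shift}(1) (describing the effect of the substitution $t \mapsto \zeta_{p^n}^u t$ on this $t$-expansion) via the $t$-expansion principle and formula (\ref{twmoment}). The first identity is a matter of recognizing a Fourier-theoretic rearrangement, and the second follows from differentiating that rearrangement once we understand how $d$ behaves under the substitution.

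For the first identity I would unfold the right-hand side of (\ref{average}) with $\zeta = \zeta_{p^n}^u$, and replace each $\Phi_{\mu_{f,\mathfrak{A}}}(\zeta_{p^n}^u t)$ by the $t$-expansion of $f|\bigl(\begin{smallmatrix} 1 & -up^{-n} \\ 0 & 1\end{smallmatrix}\bigr)$ via Lemma \ref{shift}(1). Swapping the order of summation, the inner sum over $b$ collapses to the Fourier transform value $\phi^{*}(-u) = \sum_{b}\phi(b)\zeta_{p^n}^{-bu}$. A substitution $v = -u$ together with the elementary identity $\phi^{*}(v) = (\phi^{-})^{*}(-v)$ then matches the expression against the stated formula for $f|\phi^{-}$; Proposition \ref{taylor} applied to $f|\phi^{-}$ closes the identity by the $t$-expansion principle.

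For the second assertion, I would start from (\ref{twmoment}) and substitute the same expansion of $[\phi]\Phi_{\mu_{f,\mathfrak{A}}}$. The central analytic observation is that under the change of variable $s = \zeta_{p^n}^u t$ one has $t\tfrac{\mathrm{d}}{\mathrm{d}t} = s\tfrac{\mathrm{d}}{\mathrm{d}s}$, and hence
\[ \left(t\tfrac{\mathrm{d}}{\mathrm{d}t}\right)^{m}\Phi_{\mu_{f,\mathfrak{A}}}(\zeta_{p^n}^u t)\Big|_{t=1} = \left(s\tfrac{\mathrm{d}}{\mathrm{d}s}\right)^{m}\Phi_{\mu_{f,\mathfrak{A}}}(s)\Big|_{s=\zeta_{p^n}^u}. \]
Since $\Phi_{\mu_{f,\mathfrak{A}}}$ is the $t$-expansion of $f$ around $x(\mathfrak{A})$ and $d = t\tfrac{\mathrm{d}}{\mathrm{d}t}$, this quantity equals $d^{m}f$ evaluated at the point of the Serre--Tate neighborhood with coordinate $\zeta_{p^n}^u$, which Lemma \ref{shift}(2) identifies (for $\gcd(u,p) = 1$) with $x(\mathfrak{a}_u)$ equipped with the differential $\omega_p(\mathfrak{a}_u)$. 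Interchanging the $b$- and $u$-sums produces the Fourier transform $\phi^{*}(-u)$, which for primitive $\phi$ of conductor $p^n$ vanishes off $(\mathbb{Z}/p^n\mathbb{Z})^{\times}$ and equals $G(\phi)\phi^{-1}(-u)$ on it, yielding the claimed Gauss-sum expansion.

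The main obstacle will be keeping the Fourier-theoretic signs consistent among $\phi$, $\phi^{-}$, $\phi^{*}$, and $\phi^{-1}$, and — the conceptually substantive point — translating the formal manipulation $(t\tfrac{\mathrm{d}}{\mathrm{d}t})^{m}$ applied to a shifted $t$-expansion into honest differentiation of $f$ at the moved CM point $x(\mathfrak{a}_u)$. Once Propositions \ref{taylor} and \ref{Serre--Tate} and Lemma \ref{shift} are in place, the remainder is summation bookkeeping against the $t$-expansion principle.
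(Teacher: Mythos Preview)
Your proposal is correct and tracks the paper's proof closely: both use (\ref{average}), Proposition \ref{taylor}, and Lemma \ref{shift} in the same way, and both finish the second assertion by invoking (\ref{twmoment}) together with (\ref{katzdiff}). The one mild difference is in how the first identity is confirmed: you recognize the resulting sum directly as the definition of $f|\phi^{-}$ via the substitution $v=-u$ and the identity $(\phi^{-})^{*}(-v)=\phi^{*}(v)$, whereas the paper instead computes the $q$-expansion of that sum and invokes the $q$-expansion principle to identify it with $f|\phi^{-}$; both routes are valid and equally short.
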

\begin{proof}
From (\ref{average}) we have
\[ [\phi]\Phi_{\mu_{f,\mathfrak{A}}}(t)=p^{-n}\sum_{b\in\mathbb{Z}/p^n\mathbb{Z}}\phi(b)\sum_{u\in \mathbb{Z}/p^n\mathbb{Z}} \zeta_{p^n}^{-bu} \Phi_{\mu_{f,\mathfrak{A}}}(\zeta_{p^n}^u t) \]
and by Proposition \ref{taylor}, Lemma \ref{shift} and $t$-expansion principle it follows that $[\phi]\Phi_{\mu_{f,\mathfrak{A}}}$ is $t$-expansion of
\[
p^{-n}\sum_{b\in\mathbb{Z}/p^n\mathbb{Z}}\phi(b)\sum_{u\in \mathbb{Z}/p^n\mathbb{Z}} \zeta_{p^n}^{-bu} f|\bigl(\begin{smallmatrix} 1 & -up^{-n} \\ 0 & 1 \end{smallmatrix} \bigr)
\]
with respect to Serre--Tate coordinate $t$ around $x(\mathfrak{A})$. By $q$-expansion principle, this cusp form is nothing but
{\allowdisplaybreaks\begin{align} 
\nonumber & p^{-n}\sum_{b\in\mathbb{Z}/p^n\mathbb{Z}}\phi(b)\sum_{u\in \mathbb{Z}/p^n\mathbb{Z}} \zeta_{p^n}^{-bu} \left(\sum_{j\geq 1}a(j,f)q^j\right)|\bigl(\begin{smallmatrix} 1 & -up^{-n} \\ 0 & 1 \end{smallmatrix} \bigr) \nonumber \\
&= \sum_{j\geq 1}a(j,f)q^j\sum_{b\in\mathbb{Z}/p^n\mathbb{Z}}\phi(b)\left(p^{-n}\sum_{u\in \mathbb{Z}/p^n\mathbb{Z}} \zeta_{p^n}^{-(b+j)u}\right)  \nonumber \\
&= \sum_{j\geq 1}a(j,f)q^j \phi(-j) \nonumber \\
&= f|\phi^- \;. \nonumber
\end{align} }
Another application of Proposition \ref{taylor} now proves the first assertion.

Using Lemma \ref{shift} and (\ref{average}) we have
{\allowdisplaybreaks\begin{align} 
\nonumber ([\phi]\Phi_{\mu_{f,\mathfrak{A}}})|_{t=1} &= p^{-n}\sum_{b\in\mathbb{Z}/p^n\mathbb{Z}}\phi(b)\sum_{u\in \mathbb{Z}/p^n\mathbb{Z}} \zeta_{p^n}^{-bu} f(x(\mathfrak{a}_u),\omega_p(\mathfrak{a}_u)) \nonumber \\
&= \sum_{u\in \mathbb{Z}/p^n\mathbb{Z}} \left( p^{-n}\sum_{b\in\mathbb{Z}/p^n\mathbb{Z}}\phi(b) \zeta_{p^n}^{-bu} \right) f(x(\mathfrak{a}_u),\omega_p(\mathfrak{a}_u)) \nonumber \\
&= \sum_{u\in \mathbb{Z}/p^n\mathbb{Z}} \phi^*(-u) f(x(\mathfrak{a}_u),\omega_p(\mathfrak{a}_u)) \nonumber \\
&= G(\phi)\sum_{u\in \mathbb{Z}/p^n\mathbb{Z}} \phi^{-1}(-u) f(x(\mathfrak{a}_u),\omega_p(\mathfrak{a}_u)) \nonumber 
\end{align} }
To establish the last equality we used a well known fact that for a primitive character $\phi$ we have $\phi^*=\phi^{-1}G(\phi)$ (\cite{LFE} Lemma 2.3.2 on page 45). The second assertion now follows from (\ref{katzdiff}) and (\ref{twmoment}).
\end{proof}
This key proposition immediately yields a very useful
\begin{support}\label{support}
If the Fourier coefficients of $f$ are supported on integers $j$ that are prime to $p$, the measure $\mathrm{d}\mu_{f,\mathfrak{A}}$ is supported on $\mathbb{Z}_p^\times$.
\end{support}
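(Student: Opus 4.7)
The plan is to apply Proposition \ref{key} with $\phi$ taken to be the characteristic function of $p\mathbb{Z}_p$, and exploit the fact that the isomorphism $\mu\mapsto\Phi_\mu$ between $\mathscr{M}(\mathbb{Z}_p;W)$ and $W[[T]]$ lets us read off vanishing of a measure from vanishing of the associated power series.

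More precisely, let $\phi:\mathbb{Z}/p\mathbb{Z}\to\mathbb{C}$ be the characteristic function of $0\in\mathbb{Z}/p\mathbb{Z}$, extended to $\mathbb{Z}_p$ by pullback; this is exactly the indicator function of $p\mathbb{Z}_p$. Since $0=-0$, we have $\phi^-=\phi$. By the formula for the twist recalled just before Proposition \ref{key}, the $q$-expansion of $f|\phi^-$ is
\[
f|\phi^-(z)=\sum_{j\geq 1}\phi(j)a(j,f)q^j=\sum_{\substack{j\geq 1\\ p\mid j}}a(j,f)q^j,
\]
which vanishes identically by the hypothesis that all Fourier coefficients $a(j,f)$ with $p\mid j$ are zero. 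Hence $f|\phi^-=0$ and consequently $\Phi_{\mu_{f|\phi^-,\mathfrak{A}}}=0$ as a power series.

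Proposition \ref{key} then yields $[\phi]\Phi_{\mu_{f,\mathfrak{A}}}=\Phi_{\mu_{f|\phi^-,\mathfrak{A}}}=0$. By the defining property of the module action of $\mathscr{C}(\mathbb{Z}_p;W)$ on $\mathscr{M}(\mathbb{Z}_p;W)$, namely $\Phi_{\phi\mu}=[\phi]\Phi_\mu$, and the injectivity of $\mu\mapsto\Phi_\mu$, we conclude that the measure $\phi\cdot\mathrm{d}\mu_{f,\mathfrak{A}}$ is the zero measure. Since $\phi$ is the characteristic function of $p\mathbb{Z}_p$, this says precisely that $\mathrm{d}\mu_{f,\mathfrak{A}}$ gives mass zero to $p\mathbb{Z}_p$, i.e. it is supported on $\mathbb{Z}_p\setminus p\mathbb{Z}_p=\mathbb{Z}_p^\times$.

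There is really no obstacle here beyond unwinding the dictionary furnished by Proposition \ref{key}; the only thing to double-check is that $\phi$, which is a priori only a function on $\mathbb{Z}/p\mathbb{Z}$, can legitimately play the role of the function on $\mathbb{Z}/p^n\mathbb{Z}$ appearing there (it can, via inflation along $\mathbb{Z}/p^n\mathbb{Z}\twoheadrightarrow\mathbb{Z}/p\mathbb{Z}$, and the calculation of the $q$-expansion of $f|\phi^-$ is unaffected).
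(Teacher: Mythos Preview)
Your argument is correct and is essentially the paper's own proof: the paper applies the first part of Proposition~\ref{key} with $\phi$ the characteristic function of $\mathbb{Z}_p^\times$ (concluding $\phi\cdot\mathrm{d}\mu_{f,\mathfrak{A}}=\mathrm{d}\mu_{f,\mathfrak{A}}$), whereas you take the complementary characteristic function of $p\mathbb{Z}_p$ (concluding $\phi\cdot\mathrm{d}\mu_{f,\mathfrak{A}}=0$), which amounts to the same thing. One nitpick: the reason $\phi^-=\phi$ is that $p\mathbb{Z}_p$ is stable under negation, not merely that $0=-0$.
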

\begin{proof}
This fact is furnished by taking $\phi$ to be characteristic function of $\mathbb{Z}_p^\times$ in the first part of Proposition \ref{key}. 
\end{proof}
Set
\[ f^{(p)}=\sum_{\mathrm{gcd}(j,p)=1}a(j,f)q^j \; .\]
Let $U_p$, $V_p$ and $T_p$ be standard Hecke operators acting on classical and $p$-adic modular forms. The effect of $U_p$ and $V_p$ on $q$-expansion of a modular form $f$ is given by
\[ f|U_p = \sum_{j\geq 1} a(jp,f)q^j \text{ and } f|V_p=\sum_{j\geq 1} a(j,f)q^{jp} \]
and we have
\[ T_p = U_p + \psi(p)p^{k-1}V_p \; .\]
It is easy to verify that
\begin{equation} \label{twist}
f^{(p)}=f|(1-U_pV_p)=f|(1-T_pV_p + \psi(p)p^{k-1}V_p^2 ) \; .
\end{equation}

Let $\{\mathfrak{A}_1,\ldots,\mathfrak{A}_{H^-}\}$ be a complete set of representatives for $\mathrm{Cl}_M^-(N_{ns})$ and let $\widehat{\mathfrak{A}}_j=A_j\widehat{R}_{N_{ns}}$ for $A_j\in M^{\times}_{\mathbb{A}}$, $j=1,\ldots,H^-$. We may assume that $A_{j,p}=1$ for all $j=1,\ldots,H^-$. An explicit coset decomposition 
\begin{equation}\label{coset}
\mathrm{Cl}_M^-(N_{ns}p^\infty)=\bigsqcup_{j=1}^{H^-}\mathfrak{A}_j^{-1}\mathbb{Z}_p^\times
\end{equation}
allows us to construct a $W$-valued measure $\mathrm{d}\mu_f$ on $\mathrm{Cl}_M^-(N_{ns}p^\infty)$ by constructing $H^-$ distinct $W$-valued measures $\mathrm{d}\mu_j$ on $\mathbb{Z}_p^\times$ so that for every continuous function $\xi:\mathrm{Cl}_M^-(N_{ns}p^\infty)\to W$ we have 
\[\int_{\mathrm{Cl}_M^-(N_{ns}p^\infty)} \xi\mathrm{d}\mu_f=\sum_{j=1}^{H^-}\int_{\mathbb{Z}_p^\times}\xi(\mathfrak{A}_j^{-1}x)\mathrm{d}\mu_j(x)\]

Fix once for all an arithmetic Hecke character $\lambda:M^{\times} \backslash M^{\times}_{\mathbb{A}}\to\mathbb{C}^\times$ so that $\lambda(a_\infty)=a_\infty^k$ and $\lambda|_{\mathbb{A}^\times}=\boldsymbol{\psi}^{-1}$, where $\boldsymbol{\psi}=\psi|\cdot|_{\mathbb{A}}^{-k}$ is a central character of $\mathbf{f}$. Due to criticality assumption $\lambda|_{\mathbb{A}^\times}=\boldsymbol{\psi}^{-1}$, the conductor of $\lambda$ is not independent of the conductor $c(\psi)$ of nebentypus $\psi$. Thus we give the following

\begin{recipe} 
Note that regardless of the choice of $\lambda$ we have:
\begin{itemize}
\item at primes $l\nmid c(\psi)$ Hecke character $\lambda$ is unramified and
\item at non-split primes $l|c(\psi)$ the conductor of $\lambda_l$ divides $l^{\mathrm{ord}_l(c(\psi))}$ hence does not exceed $l^{\nu(l)}$.
\end{itemize}
However, if a split prime $l|c(\psi)$ we make a very subtle choice as follows. Note that $M_l^\times= M_{\bar{\mathfrak{l}}}^\times \times M_{\mathfrak{l}}^\times  =\mathbb{Q}_l^\times\times\mathbb{Q}_l^\times$ and consequently $\lambda_l = \lambda_{\bar{\mathfrak{l}}}\lambda_{\mathfrak{l}}$. Then
\begin{itemize}
\item at split primes $l|c(\psi)$ we choose $\lambda_l$ so that its conductor is supported at $\mathfrak{l}$, that is, we choose $\lambda_{\bar{\mathfrak{l}}}$  to be unramified and $\lambda_{\mathfrak{l}}$ to have conductor $\mathfrak{l}^{\mathrm{ord}_l(c(\psi))}$. 
\end{itemize}
\end{recipe}
We take a finite extension of $W$ obtained by adjoining values of arithmetic Hecke character $\lambda$ and, abusing the symbol, we keep denoting it $W$. Let $\widehat{\lambda}:M^{\times} \backslash M^{\times}_{\mathbb{A}}\to W^\times$ defined by $\widehat{\lambda}(x)=\lambda(x)x_p^k$ be the $p$-adic avatar of $\lambda$. Then $W$-valued measures $\mathrm{d}\mu_j$ are given by
\[\mathrm{d}\mu_j=\widehat{\lambda}({\mathfrak{A}_j^{-1}})\mathrm{d}\mu_{f^{(p)},\mathfrak{A}_j} \qquad \text{for } j=1,\ldots,H^-.\]
We emphasize that important point here is that measures $\mathrm{d}\mu_{f^{(p)},\mathfrak{A}_j}$ are indeed supported on $\mathbb{Z}_p^\times$ due to Corollary \ref{support}. Note that since $A_{j,p}=1$ we actually have $\widehat{\lambda}({\mathfrak{A}_j^{-1}})=\lambda({\mathfrak{A}_j^{-1}})$.

\subsection{$p$-adic interpolation} \label{subsec:interpolation}
Let $\varphi:M^{\times} \backslash M^{\times}_{\mathbb{A}}\to\mathbb{C}^\times$ be an anticyclotomic arithmetic Hecke character such that $\varphi(a_\infty)=a_\infty^{m(1-c)}$ for some $m\geq 0$ and of conductor $N_{ns}p^s$ where $s\geq \mathrm{max}(1,\mathrm{ord}_p(N_0))$ is an arbitrary integer. Let $\widehat{\varphi}:M^{\times} \backslash M^{\times}_{\mathbb{A}}\to \mathbb{C}_p^\times$ defined by $\widehat{\varphi}(x)=\varphi(x)x_p^{m(1-c)}$ be its $p$-adic avatar. Recall that $(R\otimes_{\mathbb{Z}}\mathbb{Z}_p)^\times= R_{\bar{\mathfrak{p}}}^\times \oplus R_{\mathfrak{p}}^\times \cong\mathbb{Z}_p^\times\oplus\mathbb{Z}_p^\times$ and if $\mathrm{Cl}_M= M^\times \left\backslash M^{\times}_{\mathbb{A}}\right/ M^\times_\infty$ is the idele class group, the projection $\mathrm{pr}:\mathrm{Cl}_M\rightarrow \mathrm{Cl}_M^-(N_{ns}p^\infty)$ at $p$-th place is given by $\mathrm{pr}_p : x_p \mapsto x_p^{1-c}$. Thus, in the view of coset decomposition (\ref{coset}), $\widehat{\varphi}|_{\mathbb{Z}_p^\times}(z) =\tilde{\varphi}_p(z)z^m$ for $z=x_p^{1-c}$ where 
$\tilde{\varphi}_p=\varphi_p\circ \mathrm{pr}_p$ is a primitive Dirchlet character of conductor $p^s$.

The Mazur--Mellin transform of measure $\mathrm{d}\mu_f$ given by
\[ \mathscr{L}(f,\xi)=\int_{\mathrm{Cl}_M^-(N_{ns}p^\infty)}\xi{d}\mu_f\;, \qquad \xi \in \mathrm{Hom}_{cont}(\mathrm{Cl}_M^-(N_{ns}p^\infty),W^\times)\;,\]
is a $p$-adic analytic Iwasawa function on the $p$-adic Lie group $\mathrm{Hom}_{cont}(\mathrm{Cl}_M^-(N_{ns}p^\infty),W^\times)$ and for the $p$-adic avatar $\widehat{\varphi}$ we have: 
{\allowdisplaybreaks\begin{align} 
\nonumber \frac{\mathscr{L}(f,\widehat{\varphi})}{\mathrm{\Omega}_p^{k+2m}}&=\frac{1}{\mathrm{\Omega}_p^{k+2m}}
\int_{\mathrm{Cl}_M^-(N_{ns}p^\infty)}\widehat{\varphi} \mathrm{d}\mu_f \nonumber \\
&= \frac{1}{\mathrm{\Omega}_p^{k+2m}}\sum_{j=1}^{H^-}\widehat{\lambda}(\mathfrak{A}_j^{-1}) \int_{\mathbb{Z}_p^\times}\widehat{\varphi}(\mathfrak{A}_j^{-1}z)\mathrm{d}\mu_{f^{(p)},\mathfrak{A}_j}(z) \nonumber \\
&=\frac{1}{\mathrm{\Omega}_p^{k+2m}}\sum_{j=1}^{H^-}\widehat{\lambda}(\mathfrak{A}_j^{-1}) \widehat{\varphi}(\mathfrak{A}_j^{-1}) \int_{\mathbb{Z}_p^\times} \tilde{\varphi}_p(z)z^m \mathrm{d}\mu_{f^{(p)},\mathfrak{A}_j}(z) \nonumber \\
&= \frac{ G(\tilde{\varphi}_p)}{\mathrm{\Omega}_p^{k+2m}}\sum_{j=1}^{H^-}\widehat{\lambda}(\mathfrak{A}_j^{-1}) \widehat{\varphi}(\mathfrak{A}_j^{-1}) \sum_{u\in (\mathbb{Z}/p^s\mathbb{Z})^\times} \tilde{\varphi}_p^{-1}(-u) (d^mf^{(p)})(x(\mathfrak{a}_{j,u}),\omega_p(\mathfrak{a}_{j,u})) \label{p-adic-sum}
\end{align}}
Since the measures $\mathrm{d}\mu_{f^{(p)},\mathfrak{A}_j}$ are supported on $\mathbb{Z}_p^\times$ due to Corollary \ref{support}, we can use Proposition \ref{key} to establish the last equality. Thus, for each $j=1,\ldots,H^-$, we have $x(\mathfrak{a}_{j,u})=x(\mathfrak{A}_j)\circ \bigl(\begin{smallmatrix} 1 & up^{-n} \\ 0 & 1 \end{smallmatrix} \bigr)$ on $Sh/\widehat{\Gamma}_1(N^{(p)}p^r)$, $u\in (\mathbb{Z}/p^s\mathbb{Z})^\times$, and $\mathfrak{a}_{j,u}$'s are exactly $p^{s-1}(p-1)$ proper $R_{N_{ns}p^n}$-ideal class representatives such that $\mathfrak{a}_{j,u}R_{N_{ns}}=\mathfrak{A}_jR_{N_{ns}}$ (see Lemma \ref{shift} and Section \ref{subsec:cm}). Let $\widehat{\mathfrak{a}}_{j,u}=a_{j,u}\widehat{R}_{N_{ns}p^s}$ for $a_{j,u}\in M^{\times}_{\mathbb{A}}$.

Note that $A_{j,p}=1$ implies $\widehat{\lambda}(\mathfrak{A}_j^{-1})=\lambda(\mathfrak{A}_j^{-1})$ and  $\widehat{\varphi}(\mathfrak{A}_j^{-1})=\varphi(\mathfrak{A}_j^{-1})$. Since $\mathfrak{a}_{j,u}R_{N_{ns}}=\mathfrak{A}_jR_{N_{ns}}$ and $\widehat{\mathfrak{a}}_{j,u}=\bigl(\begin{smallmatrix} 1 & up^{-s} \\ 0 & 1 \end{smallmatrix} \bigr)^{-1}\widehat{\mathfrak{A}}_j=\bigl(\begin{smallmatrix} 1 & -up^{-s} \\ 0 & 1 \end{smallmatrix} \bigr)\widehat{\mathfrak{A}}_j$, we have \[(a_{j,u})_p=((a_{j,u})_{\bar{\mathfrak{p}}},(a_{j,u})_{\mathfrak{p}})=(-u,1)A_{j,p}=(-u,1)\; \mathrm{mod}\; (\bar{\mathfrak{p}}^s, \mathfrak{p}^s) \,.\] 
We conclude 
\[\lambda(\mathfrak{a}_{j,u})=\lambda_{\bar{\mathfrak{p}}}(-u)\lambda_{\mathfrak{p}}(1)\lambda(\mathfrak{A}_j)= \lambda(\mathfrak{A}_j)\] 
since even if $\lambda$ eventually has a non-trivial conductor at $p$ (precisely in the case $p|c(\psi)$) it is supported at $\mathfrak{p}$ by our choice. On the other hand, $\mathrm{pr}_p((-u,1))=(-u,1)^{1-c}=-u$ and we conclude
\[\varphi(\mathfrak{a}_{j,u})=\varphi_p((-u,1)^{1-c})\varphi(\mathfrak{A}_j)=\tilde{\varphi}_p(-u)\varphi(\mathfrak{A}_j)\,.\]

If we set $\chi_m=\lambda \cdot \varphi \cdot |\cdot|_{M_\mathbb{A}}^m$, its $\infty$-type is $(k+2m,0)$, that is, $\chi_m(a_\infty)=a_\infty^{k+2m}$, and its conductor is precisely $N_{ns}p^s\prod_{l\in C_{sp}}\mathfrak{l}^{\mathrm{ord}_l(c(\psi))}$ by our choice of $\lambda$. Moreover, $\chi_m|_{\mathbb{A}^\times}=\boldsymbol{\psi}^{-1}|\cdot|_{\mathbb{A}}^{2m}=\boldsymbol{\psi}_m^{-1}$ since the anticyclotomic character $\varphi$ is trivial on rational ideles $\mathbb{A}^\times$,  while $\lambda|_{\mathbb{A}^\times}=\boldsymbol{\psi}^{-1}$ by our choice. 

Then continuing from (\ref{p-adic-sum}), we have
{\allowdisplaybreaks\begin{align}\nonumber
\nonumber \frac{\mathscr{L}(f,\widehat{\varphi})}{\mathrm{\Omega}_p^{k+2m}}&= \frac{G(\tilde{\varphi}_p)}{\mathrm{\Omega}_p^{k+2m}} \sum_{j=1}^{H^-}\sum_{u\in (\mathbb{Z}/p^s\mathbb{Z})^\times}\lambda(\mathfrak{a}_{j,u}^{-1})\varphi(\mathfrak{a}_{j,u}^{-1}) (d^mf^{(p)})(x(\mathfrak{a}_{j,u}),\omega_p(\mathfrak{a}_{j,u})) \nonumber \\
&=\frac{G(\tilde{\varphi}_p)}{\mathrm{\Omega}_p^{k+2m}} \sum_{j=1}^{H^-}\sum_{u\in (\mathbb{Z}/p^s\mathbb{Z})^\times} \left(\chi_m(a_{j,u}^{-1})|a_{j,u}|_{M_\mathbb{A}}^m\right) (d^mf^{(p)})(x(\mathfrak{a}_{j,u}),\omega_p(\mathfrak{a}_{j,u})) \label{outside-p}
\end{align}}
Note that proper $R_{N_{ns}}p^s$-ideals $\mathfrak{a}_{j,u}$ in the above double sum form a complete set of class representatives for $\mathrm{Cl}_M^-(N_{ns}p^s)$ and moreover every summand is independent of the choice of proper $R_{N_{ns}}p^s$-ideal class representative. The latter immediately follows from Lemma \ref{improv} and Katz--Shimura rationality result (\ref{K-S}) (alternatively, we can directly quote statement (G3') on page 762 of \cite{HidaDwork}). Thus we can relabel this complete set of representatives of proper $R_{N_{ns}p^s}$-ideal classes in $\mathrm{Cl}_M^-(N_{ns}p^s)$ into $\mathfrak{a}_1,\ldots,\mathfrak{a}_{h^-}$ where $h^-=h^-(N_{ns}p^s):=|\mathrm{Cl}_M^-(N_{ns}p^s)|$ and $\widehat{\mathfrak{a}}_j=a_j\widehat{R}_{N_{ns}p^s}$ for $j=1,\ldots,h^-$. Moreover, we may assume that $a_{j,l}=1$ at all primes $l|N_{ns}p$. Then the double sum in question is
\[\mathcal{L}_{\chi_m}(d^mf^{(p)}):=\sum_{j=1}^{h^-}\left(\chi_m(a_j^{-1})|a_j|_{M_\mathbb{A}}^m\right) (d^mf^{(p)})(x(\mathfrak{a}_j),\omega_p(\mathfrak{a}_j))\;. \]
Now we prove the following
\begin{back} 
\[ \mathcal{L}_{\chi_m}(d^mf^{(p)})=\left(1-a(p,f)\chi_m(\bar{\mathfrak{p}})+\psi(p)p^{k-1}\chi_m(\bar{\mathfrak{p}}^2)\right)\mathcal{L}_{\chi_m}(d^mf)\]
\end{back}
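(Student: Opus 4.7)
The plan is to expand $f^{(p)}$ via (\ref{twist}) into a linear combination of $f,\,f|V_p,\,f|V_p^2$ and reduce the statement to two ``shift'' identities along the Frobenius direction in $\mathrm{Cl}_M^-(N_{ns}p^s)$. Since $f$ is a Hecke eigenform with $f|T_p=a(p,f)f$, the expansion yields
\[
f^{(p)}=f-a(p,f)\,f|V_p+\psi(p)p^{k-1}\,f|V_p^2,
\]
so, by $\mathbb{C}$-linearity of $d^m$ and of $\mathcal{L}_{\chi_m}$, the lemma is equivalent to
\[
\mathcal{L}_{\chi_m}\bigl(d^m(f|V_p^r)\bigr)=\chi_m(\bar{\mathfrak{p}})^r\,\mathcal{L}_{\chi_m}(d^mf),\qquad r=1,2.
\]
Only the case $r=1$ requires real work; $r=2$ will follow by iterating the same argument applied to the Frobenius isogeny of degree $p^2$.

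For $r=1$, I will first replace $d^m(f|V_p)$ by $p^m\,(d^mf)|V_p$ via the identity $d=q\frac{d}{dq}$ on $q$-expansions. Next, I will interpret $V_p$ geometrically: the Tate-curve calculation $Tate(q)/\mu_p\cong Tate(q^p)$ together with $f|V_p(q)=\sum_n a(n,f)q^{pn}$ shows that Katz's $V_p$ sends a triple $(E,\omega_p,i_p)$ to its quotient by the canonical connected subgroup $i_p(\mu_p)\subset E[p]^\circ$, equipped with the induced $p^\infty$-level structure and its associated differential on the quotient. Applied to the CM point $x(\mathfrak{a}_j)$, Proposition \ref{adelic}(Frob) and Proposition \ref{Serre--Tate} give
\[
\bigl((d^mf)|V_p\bigr)\bigl(x(\mathfrak{a}_j),\omega_p(\mathfrak{a}_j)\bigr)
=(d^mf)\bigl(x(\mathfrak{p}^{-1}\mathfrak{a}_j),\omega_p(\mathfrak{p}^{-1}\mathfrak{a}_j)\bigr).
\]

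Plugging this into the sum defining $\mathcal{L}_{\chi_m}((d^mf)|V_p)$, I will then re-index by $\mathfrak{a}'_j:=\mathfrak{p}^{-1}\mathfrak{a}_j$. Since $\mathfrak{p}$ is prime to $N_{ns}p^s$ and multiplication by it is a bijection on $\mathrm{Cl}_M^-(N_{ns}p^s)$, the $\mathfrak{a}'_j$'s form a complete system of class representatives; on ideles we have $a_j=\mathfrak{p}_{\mathrm{idele}}\,a'_j$ modulo $\widehat R_{N_{ns}p^s}^{\times}$, where $\mathfrak{p}_{\mathrm{idele}}$ is $p$ in the $\mathfrak{p}$-slot and $1$ elsewhere. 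Tracking how this substitution transforms $\chi_m(a_j^{-1})$ and $|a_j|^m_{M_\mathbb{A}}$, and combining with the $p^m$ produced by $d^m(f|V_p)=p^m(d^mf)|V_p$, I obtain a formula of the form $\chi_m(\mathfrak{p})^{-1}\mathcal{L}_{\chi_m}(d^mf)$. The last step is to convert $\chi_m(\mathfrak{p})^{-1}$ into $\chi_m(\bar{\mathfrak{p}})$: with the decomposition $\chi_m=\lambda\varphi|\cdot|^m_{M_\mathbb{A}}$, the anticyclotomic identity $\varphi(\bar{\mathfrak{p}})=\varphi(\mathfrak{p})^{-1}$, the prescribed infinity type $\chi_m(a_\infty)=a_\infty^{k+2m}$, the criticality normalization $\chi_m|_{\mathbb{A}^\times}=\boldsymbol{\psi}_m^{-1}$, and the Recipe-choice of $\lambda$, the product formula applied to the diagonal idele of $p\in M^\times$ yields the required relation between $\chi_m(\mathfrak{p})$, $\chi_m(\bar{\mathfrak{p}})$, and $p^{k+2m}$, the residual power of $p$ being absorbed by the differential scaling $\pi^*\omega_p(\mathfrak{p}^{-1}\mathfrak{a})=p\,\omega_p(\mathfrak{a})$ acting on the weight-$(k+2m)$ form $d^mf$ (visible on formal groups as $dt/t\mapsto p\,dt/t$ under Frobenius on $\widehat{\mathbb{G}}_m$).

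The main technical obstacle is the simultaneous bookkeeping of the several competing factors: the $p^m$ coming from $d^m$ acting on a $q^p$-series, the $p^{-m}$ coming from the idele norm $|\mathfrak{p}_{\mathrm{idele}}|^m_{M_\mathbb{A}}$, the $p^{k+2m}$ contributed by the Serre--Tate pullback of $\omega_p$ under Frobenius for a weight-$(k+2m)$ form, and the criticality identity forcing $\chi_m(\mathfrak{p})\chi_m(\bar{\mathfrak{p}})=p^{-(k+2m)}$ up to contributions from ramified split primes in $C_{sp}$ that cancel by our subtle choice of $\lambda$. Once these align, the $r=1$ identity $\mathcal{L}_{\chi_m}(d^m(f|V_p))=\chi_m(\bar{\mathfrak{p}})\mathcal{L}_{\chi_m}(d^mf)$ falls out, the $r=2$ case follows by applying the argument to the degree-$p^2$ Frobenius (quotient by $X(\mathfrak{a}_j)[\mathfrak{p}^2]$), and the three pieces combine linearly to give the asserted Euler-factor formula.
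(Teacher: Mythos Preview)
Your overall strategy matches the paper's: expand $f^{(p)}$ via \eqref{twist} and the eigenvalue relation, then reduce to a shift identity for $V_p$ acting on the sum $\mathcal{L}_{\chi_m}$. The difference lies in how you realize $V_p$ geometrically. You interpret $V_p$ as Katz's Frobenius---quotient by the connected subgroup $\mu_p$---which by Proposition~\ref{adelic}(Frob) sends $x(\mathfrak{a}_j)$ to $x(\mathfrak{p}^{-1}\mathfrak{a}_j)$; this forces you into the delicate bookkeeping you describe (differential scaling under Frobenius, the $p^m$ from $d^m$, the idele norm, and finally a product-formula argument to convert $\chi_m(\mathfrak{p})^{-1}$ into $\chi_m(\bar{\mathfrak{p}})$). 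The paper instead repeats the adelic maneuver of Lemma~\ref{shift}: writing $\alpha=\bigl(\begin{smallmatrix}\widetilde{p}&0\\0&1\end{smallmatrix}\bigr)\in G(\mathbb{Q})$ and using the analogue of \eqref{subtle} to identify $(d^mf)|V_p$ at $(x(\mathfrak{a}_j),\omega_p(\mathfrak{a}_j))$ with $d^mf$ at $(x(\mathfrak{a}_j),\omega_p(\mathfrak{a}_j))\circ\alpha_p^{-1}$, which by Shimura's reciprocity is $(x(\bar{\mathfrak{p}}\mathfrak{a}_j),\omega_p(\bar{\mathfrak{p}}\mathfrak{a}_j))$. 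Since the two matrices $\bigl(\begin{smallmatrix}1&0\\0&p\end{smallmatrix}\bigr)$ and $\bigl(\begin{smallmatrix}p&0\\0&1\end{smallmatrix}\bigr)^{-1}$ differ by the central $p^{-1}$, both routes reach the same place; but the paper's choice lands directly on the $\bar{\mathfrak{p}}$-shift, so $\chi_m(\bar{\mathfrak{p}})$ drops out of the re-indexing immediately and none of your conversion step is needed. Your argument should go through once the four competing $p$-powers are aligned, but the paper's route is shorter precisely because it avoids that alignment altogether.
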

\begin{proof}
We follow the notation from the proof of Lemma \ref{shift}. The key point of the proof is that
\[ (d^mf)|V_p (x(\mathfrak{a}_j),\omega_p(\mathfrak{a}_j))=d^mf\left((x(\mathfrak{a}_j),\omega_p(\mathfrak{a}_j))\circ\bigl(\begin{smallmatrix} \widetilde{p}_p & 0 \\ 0 & 1 \end{smallmatrix} \bigr)^{-1}\right)=d^mf(x(\bar{\mathfrak{p}}\mathfrak{a}_j),\omega_p(\bar{\mathfrak{p}}\mathfrak{a}_j)) \]
The second identity follows immediately from Shimura's reciprocity law while the first one can be established by repeating subtle argument from Lemma \ref{shift}.
Indeed, if we set $\alpha=\bigl(\begin{smallmatrix} \widetilde{p} & 0 \\ 0 & 1 \end{smallmatrix}\bigr)\in G(\mathbb{Q})\subset G(\mathbb{A})$, then $d^mf|V_p=p^{-k/2-m}d^m(f|\alpha_\infty)$ is evident from the $q$-expansion principle, and we have the following incarnation of the identity (\ref{subtle}):
\[\delta_k^mf((x,\omega_\infty(x))\circ \alpha_p^{-1}) = p^{-k/2-m}\delta_k^m (f|\alpha_\infty)(x ,\omega_\infty(x))\;.\]
Its proof is literally the same as the one of (\ref{subtle}), except that the present $\alpha_\infty$ not being unipotent causes appearance of the $p$-power factor coming from its determinant.

Using (\ref{twist}) we conclude
\begin{align}
d^mf^{(p)}(x(\mathfrak{a}_j),\omega_p(\mathfrak{a}_j)) &= \left(d^mf|(1-T_pV_p + \psi(p)p^{k-1}V_p^2)\right)(x(\mathfrak{a}_j),\omega_p(\mathfrak{a}_j))  \nonumber \\
&= d^mf(x(\mathfrak{a}_j),\omega_p(\mathfrak{a}_j))- p^ma(p,f)d^mf(x(\bar{\mathfrak{p}}\mathfrak{a}_j),\omega_p(\bar{\mathfrak{p}}\mathfrak{a}_j)) + \nonumber  \\ & \psi(p)p^{k+2m-1} d^mf(x(\bar{\mathfrak{p}}^{2}\mathfrak{a}_j),\omega_p(\bar{\mathfrak{p}}^2\mathfrak{a}_j))  \nonumber
\end{align}
for $j=1,\ldots,h^-$. After multiplying by $\chi_m(a_j^{-1})|a_j|_{M_\mathbb{A}}^m$ and summing over $j$ we get the desired identity.
\end{proof}
This lemma allows us to piece together (\ref{outside-p}) with (\ref{adelic-sum}) by means of the Katz--Shimura rationality result (\ref{K-S}):
{\allowdisplaybreaks\begin{align}\nonumber
\frac{\mathscr{L}(f,\widehat{\varphi})}{\mathrm{\Omega}_p^{k+2m}}&=  \frac{G(\tilde{\varphi}_p)}{\mathrm{\Omega}_p^{k+2m}} \mathcal{L}_{\chi_m}(d^mf^{(p)}) \nonumber \\
&= \frac{G(\tilde{\varphi}_p)}{\mathrm{\Omega}_p^{k+2m}} \left(1-a(p,f)\chi_m(\bar{\mathfrak{p}})+\psi(p)p^{k-1}\chi_m(\bar{\mathfrak{p}}^2)\right)\mathcal{L}_{\chi_m}(d^mf) \nonumber \\ 
&\stackrel{(\ref{K-S})}{=} \frac{G(\tilde{\varphi}_p)}{\mathrm{\Omega}_\infty^{k+2m}} \left(1-a(p,f)\chi_m(\bar{\mathfrak{p}})+\psi(p)p^{k-1}\chi_m(\bar{\mathfrak{p}}^2)\right) \sum_{j=1}^{h^-} \left(\chi_m(a_j^{-1})|a_j|_{M_\mathbb{A}}^m\right) (\delta_k^mf)(x(\mathfrak{a}_j),\omega_\infty(\mathfrak{a}_j)) \nonumber \\
&\stackrel{(\ref{adelic-sum})}{=} \frac{G(\tilde{\varphi}_p)}{\mathrm{\Omega}_\infty^{k+2m}} \tilde{E}(p) j(g_{1,\infty},\mathbf{i})^{k+2m}|\mathrm{det}(g_1^{(\infty)})|_\mathbb{A}^m\left(\frac{\varphi_{\mathbb{Q}}(N_{ns}p^s)}{2\varphi_M(N_{ns}p^s)}\right)^{-1}
L_{\chi_m}(\mathbf{f}_m) \nonumber \\
&= \tilde{C}\tilde{E}(p)\frac{L_{\chi_m}(\mathbf{f}_m)}{\mathrm{\Omega}_\infty^{k+2m}} \label{main}
\end{align}}
where
\begin{equation}\label{euler}
\tilde{E}(p):=1-a(p,f)\chi_m(\bar{\mathfrak{p}})+\psi(p)p^{k-1}\chi_m(\bar{\mathfrak{p}}^2)
\end{equation}
and
\begin{equation}\label{fudge}
\tilde{C}:=G(\tilde{\varphi}_p)j(g_{1,\infty},\mathbf{i})^{k+2m}|\mathrm{det}(g_1^{(\infty)})|_\mathbb{A}^m\left(\frac{\varphi_{\mathbb{Q}}(N_{ns}p^s)}{2\varphi_M(N_{ns}p^s)}\right)^{-1}\; .
\end{equation}
Note that we have (\ref{main}) only under assumption $s\geq \mathrm{max}(1,\mathrm{ord}_p(N_0))$ as this was running assumption (\ref{dominate}) in  Section \ref{sec:hecke}.

\section{Main theorem}\label{sec:mainthm}
Before invoking the main Theorem 4.1 of \cite{HidaNV}, it remains to explain how starting from a normalized Hecke newform $f_0\in S_k(\Gamma_0(N_0),\psi)$, we choose a suitable normalized Hecke eigen-cusp form $f$ such that its arithmetic lift $\mathbf{f}$ is in the automorphic representation $\pi_{\mathbf{f}_0}$ generated by the unitarization $\mathbf{f}_0^u$. If $f_0|T(n)=a(n,f_0)f_0$ we define Satake parameters $\alpha_l,\beta_l\in\mathbb{C}$ by the equations $\alpha_l+\beta_l=a(l,f_0)/l^{(k-1)/2}$ and $\alpha_l\beta_l=\psi(l)$ when $l\nmid N_0$, while we set $\alpha_l=a(l,f_0)/l^{(k-1)/2}$ and $\beta_l=0$ when $l|N_0$. The form $f$ is a normalized Hecke eigen-cusp form with $f|T(n)=a(n,f)f$ and $a(l,f)=a(l,f_0)$ for all primes $l$ outside $\mathrm{lcm}(N_0,p,d_0(M))$. If $\mathrm{lcm}(N_0,p,d_0(M))=N_0$, we set $f:=f_0$, otherwise it is possible to choose $f$ such that for primes $l|\mathrm{lcm}(N_0,p,d_0(M))$ we have
\[ a(l,f)=
\begin{cases}
a(l,f_0) & \text{if } l|N_0,\\
\alpha_ll^{(k-1)/2} & \text{if } l\nmid N_0.\\
\end{cases}
\]
Note that $\mathbf{f}_0^u$ and $\mathbf{f}^u$ generate the same unitary automorphic representation which we denote $\pi_{\mathbf{f}}$ from now.
We also assume that $a(l,f_0)\neq0$ for split $l\in C_{sp}$. 

Let $f_0$ and $f$ be as above. Recall that if $N_0=\prod_ll^{\nu(l)}$ is the prime factorization we denote by $N_{ns}=\prod_{l \, \text{non-split}}l^{\nu(l)}$ its ``non-split'' part. Let $\lambda:M^{\times} \backslash M^{\times}_{\mathbb{A}}\to\mathbb{C}^\times$ be an arithmetic Hecke character that was fixed once and for all at the end of Section \ref{subsec:measure} so that $\lambda(a_\infty)=a_\infty^k$ and $\lambda|_{\mathbb{A}^\times}=\boldsymbol{\psi}^{-1}$, where $\boldsymbol{\psi}$ is a central character of $\mathbf{f}$. 

Let $\varphi:M^{\times} \backslash M^{\times}_{\mathbb{A}}\to\mathbb{C}^\times$ be an anticyclotomic arithmetic Hecke character such that $\varphi(a_\infty)=a_\infty^{m(1-c)}$ for some $m\geq 0$ (i.e. of infinity type $(m,-m)$) and of conductor 
\begin{enumerate}
\item[1)] $N_{ns}p^s$ where $s\geq \mathrm{max}(1,\mathrm{ord}_p(N_0))$ is an \emph{arbitrary} integer or
\item[2)] $N_{ns}$ 
\end{enumerate}
We refer to the first case as $p$-ramified one, and to the second case as $p$-unramified one.
We divide the set of prime factors of $d_0(M)$, $N_0$ and ideal norm of the conductor of $\varphi$ into disjoint union $A\sqcup C$ as follows. If we are in the $p$-ramified case we set $A=\{p\}$, otherwise we set $A=\emptyset$. Set $C=C_0\sqcup C_1$ where $C_1$ is the set of prime factors of $d_0(M)$ and $C_0=C_i\sqcup C_{sp}\sqcup C_r$ so that $C_i$ consists of primes inert in $M$, $C_r=\{2\}$ if $\mathrm{ord}_2(d(M))=2$ with $\nu(2)>2$ and $C_r=\emptyset$ otherwise. Then $C_{sp}$ consists of primes split in $M$ that are not already placed in $A$. Thus, we have three possibilities for prime $p$: in the $p$-ramified case it is placed in set $A$, whereas in the $p$-unramified case it is placed in set $C_{sp}$ when $p|N_0$ or is completely out of this consideration when $p\nmid N_0$.
In the $p$-ramified case we set $\mathcal{A}=\{\bar{\mathfrak{p}}\}$, and in both cases we choose a prime $\bar{\mathfrak{l}}$ over each $l\in C_{sp}$ denoting the set of them by $\mathcal{C}_{sp}=\{\bar{\mathfrak{l}}|l\in C_{sp}\}$. 

Set $\chi_m=\lambda \cdot \varphi \cdot |\cdot|_{M_\mathbb{A}}^m$ and recall that $\chi_m^-:=(\chi_m \circ c)/|\chi_m|$ denotes the unitary projection. Note that $\chi_m^-=(\lambda\varphi)^-$ as the norm character has trivial unitary projection. Depending on the two cases above, the character $\chi_m$ has conductor:
\begin{enumerate} 
\item[1)]  $N_{ns}p^s\prod_{l\in C_{sp}}\mathfrak{l}^{\mathrm{ord}_l(c(\psi))}
$
\item[2)] $N_{ns}\prod_{l\in C_{sp}}\mathfrak{l}^{\mathrm{ord}_l(c(\psi))}
$
\end{enumerate} 
and satisfies condition (F) of Theorem 4.1 of \cite{HidaNV}. Set $N:=\mathrm{lcm}(N_0,p^s)$, where in the $p$-unramified case we assume $s=0$. Recall that $d=|d(M)|$ and $\hat{\pi}_{\mathbf{f}}$ is the base-change lift of the above ${\pi}_{\mathbf{f}}$ to $\mathrm{Res}_{M/\mathbb{Q}}G$. We write $L^{(Nd)}(\frac{1}{2},\hat{\pi}_{\mathbf{f}}\otimes \chi_m^-)$ for the imprimitive $L$-function obtained by removing Euler factors at primes dividing $Nd$ from the primitive one. 
\begin{main} There exists a $W$-valued $p$-adic measure $\mathrm{d}\mu_f$ on $\mathrm{Cl}_M^-(N_{ns}p^\infty)$ whose Mazur--Mellin transform is a $p$-adic analytic Iwasawa function on the $p$-adic Lie group $\mathrm{Hom}_{cont}(\mathrm{Cl}_M^-(N_{ns}p^\infty),W^\times)$, such that whenever $\varphi:M^{\times} \backslash M^{\times}_{\mathbb{A}}\to\mathbb{C}^\times$ is an anticyclotomic arithmetic Hecke character with $\varphi(a_\infty)=a_\infty^{m(1-c)}$ for some $m\geq 0$ and of conductor $N_{ns}p^s$ where $s\geq \mathrm{ord}_p(N)$ is an arbitrary integer or of conductor $N_{ns}$,
we have $p$-adic interpolation of the ``square root'' of central critical Rankin--Selberg $L$-value  $L(\frac{1}{2},\hat{\pi}_{\mathbf{f}}\otimes (\lambda\varphi)^-)$ as follows:
\[\left(\frac{\mathscr{L}(f,\widehat{\varphi})}{\mathrm{\Omega}_p^{k+2m}}\right)^2= \tilde{C}^2\tilde{E}(p)^2 c\frac{\Gamma(k+m)\Gamma(m+1)}{(2\pi\mathbf{i})^{k+2m+1}}E(1/2)E'(m)\frac{L^{(Nd)}(\frac{1}{2},\hat{\pi}_{\mathbf{f}}\otimes (\lambda\varphi)^-)}{(\mathrm{\Omega}_\infty^{k+2m})^2}\quad .\]
The constants $\tilde{E}(p)$  and $\tilde{C}$ are given by (\ref{euler}) and (\ref{fudge}), respectively. If the notation $[\cdot]^*$ means that the factor inside the brackets appears only in the $p$-ramified case, the constant $c=c_1\cdot G\cdot v$ with \[c_1=[\mathrm{exp}(-\frac{2\pi\mathbf{i}}{p^s})]^*\sqrt{d(M)}(2\mathbf{i})^{-k-2m}N^{k+2m}\] 
is given by
\[v=\frac{\prod_{l\in C_{sp}}l^{\nu(l)}}{c_2[p^s(1-\frac{1}{p})^3]^*\prod_{l\in C_i}l^{2\nu(l)}(1+\frac{1}{l})^2(1-\frac{1}{l})
\prod_{l\in C_r\cup C_1, \nu(l)>0}(1-\frac{1}{l})}\; ,\]
\[
G=\left[\chi_{m,p}^-(p^s)^{-1}G(\chi_{m,\bar{\mathfrak{p}}}^-)\right]^*\prod_{l\in C}\chi_{m,l}^-(l^{\nu(l)})^{-1}\prod_{l\in C_{sp},l|c(\psi)}l^{((k/2)+m)(\nu(l)-\mathrm{ord}_l(c(\psi)))}\chi_{m,\bar{\mathfrak{l}}}^-(l^{\nu(l)-\mathrm{ord}_l(c(\psi))})G(\chi_{m,\bar{\mathfrak{l}}}^-) \;,
\]
where $\chi_{m,l}^-=\chi_m^-|_{\mathbb{Q}_l^\times}$, $\chi_{m,\bar{\mathfrak{l}}}^-=\chi_m^-|_{M_{\bar{\mathfrak{l}}}^\times}$, $G(\chi_{m,\bar{\mathfrak{l}}}^-)$ is Gauss sum of $\chi_{m,\bar{\mathfrak{l}}}^-$,
\[ c_2=
\begin{cases}
1 & \text{if } 2\nmid d(M) \text{or } \nu(2)\geq2,\\
6 & \text{if } 4\parallel d(M) \text{and } \nu(2)=0,\\
4 & \text{if } 8\parallel d(M) \text{and } \nu(2)=0,\\
2 & \text{if } 2|d(M) \text{and } \nu(2)=1,\\
\end{cases}
\]
and the modification Euler factors are given by
\[ E(1/2)=\prod_{\mathfrak{l}|l\in C_{sp}}\left(1-\frac{\chi_m^-(\mathfrak{l})\alpha_l}{N(\mathfrak{l})^{1/2}}\right)^{-1}\prod_{\mathfrak{l}|d(M)}\left(1-\frac{\chi_m^-(\mathfrak{l})\alpha_l}{N(\mathfrak{l})^{1/2}}\right)^{-1}\left(1-\frac{\chi_m^-(\mathfrak{l})\beta_l}{N(\mathfrak{l})^{1/2}}\right)^{-1}\; , \]
\[
E'(m)=\frac{
\prod_{l\in C_{sp},l|c(\psi)} \frac{\alpha_l^{\nu(l)-\mathrm{ord}_l(c(\psi))}}{l^{(\nu(l)-\mathrm{ord}_l(c(\psi)))(m+(k-1)/2)}}
\prod_{l\in C_{sp},l\nmid c(\psi)}\alpha_l^{\nu(l)}l^{\nu(l)/2}\chi_m^-(\bar{\mathfrak{l}}^{\nu(l)})\left( 1-\frac{1}{\alpha_ll^{1/2}\chi_m^-(\bar{\mathfrak{l}})}\right)
           }
{\prod_{l\in C_{sp}}\alpha_l^{\nu(l)}l^{\nu(l)/2}\chi_m^-(\mathfrak{l}^{\nu(l)})\left(1-\frac{1}{\alpha_ll^{1/2}\chi_m^-(\mathfrak{l})}\right)}
\; .\]
\end{main}
\begin{proof}
In the $p$-ramified case the theorem follows from (\ref{main}) and Theorem 4.1 of \cite{HidaNV}. 

In the $p$-unramified case, the proof amounts to a tiny part of the argument of the $p$-ramified case. Namely, recall that in this case for $g_{1,p}$ we use $\bigl(\begin{smallmatrix} p^{\nu(p)} & 0 \\ 0 & 1  \end{smallmatrix} \bigr)$ when $p|N_0$ or identity matrix when $p\nmid N_0$. Note that proof of the Lemma \ref{factoring} yields that when $\mathfrak{C}_p=1$ and this choice of $g_{1,p}$ is used, $a\mapsto \mathbf{f}_m(\rho(a)g_1) \chi_m(a)$ factors through $\mathrm{Cl}_M^-(N_{ns})$ because, simply put, one can just ignore the part regarding prime number $p$ in that proof. Thus, the (\ref{adelic-sum}) becomes
\[ L_{\chi_m}(\mathbf{f}_m) 
= \frac{\varphi_{\mathbb{Q}}(N_{ns})}{2\varphi_M(N_{ns})}\sum_{j=1}^{H^-}\chi_m(A_j)\mathbf{f}_m(\rho(A_j)g_1)\;.\]
Then note that the action of such a choice of $g_{1,p}$ on CM points on $Sh$ is already explained in the argument above using Deligne's treatment of $Sh$, and we conclude that the points $[z_1,\rho(A_j^{-1})g_1^{(\infty)}]$ in $Sh(\mathbb{C})$ correspond to 
\[x(\mathfrak{A}_j)=\xw{\mathfrak{A}_j}{\mathcal{W}}\]
on $Sh_{/\mathbb{Q}}$. Finally
{\allowdisplaybreaks\begin{align} 
\nonumber \frac{\mathscr{L}(f,\widehat{\varphi})}{\mathrm{\Omega}_p^{k+2m}}&=\frac{1}{\mathrm{\Omega}_p^{k+2m}}
\int_{\mathrm{Cl}_M^-(N_{ns}p^\infty)}\widehat{\varphi} \mathrm{d}\mu_f \nonumber \\
&= \frac{1}{\mathrm{\Omega}_p^{k+2m}}\sum_{j=1}^{H^-}\widehat{\lambda}(\mathfrak{A}_j^{-1}) \int_{\mathbb{Z}_p^\times}\widehat{\varphi}(\mathfrak{A}_j^{-1}z)\mathrm{d}\mu_{f^{(p)},\mathfrak{A}_j}(z) \nonumber \\
&=\frac{1}{\mathrm{\Omega}_p^{k+2m}}\sum_{j=1}^{H^-}\widehat{\lambda}(\mathfrak{A}_j^{-1}) \widehat{\varphi}(\mathfrak{A}_j^{-1}) \int_{\mathbb{Z}_p^\times} z^m \mathrm{d}\mu_{f^{(p)},\mathfrak{A}_j}(z) \nonumber \\
&= \frac{ G(\tilde{\varphi}_p)}{\mathrm{\Omega}_p^{k+2m}}\sum_{j=1}^{H^-}\widehat{\lambda}(\mathfrak{A}_j^{-1}) \widehat{\varphi}(\mathfrak{A}_j^{-1}) \sum_{u\in (\mathbb{Z}/p^s\mathbb{Z})^\times}(d^mf^{(p)})(x(\mathfrak{a}_{j,u}),\omega_p(\mathfrak{a}_{j,u})) \nonumber
&=\ldots = \tilde{C}\tilde{E}(p)\frac{L_{\chi_m}(\mathbf{f}_m)}{\mathrm{\Omega}_\infty^{k+2m}}
\end{align}}
and the theorem follows.
\end{proof}
\begin{remark}\label{remark} { \em
\begin{itemize}
\item[(1)] Our result covers the case considered in \cite{BDP}.
\item[(2)] Note that the anticyclotomic character $\varphi$ satisfies $\varphi(\bar{\mathfrak{p}})=\varphi(\mathfrak{p})^{-1}$ so $\varphi$ is either unramified at $p$ or has conductor $p^s$ for some $s\geq 1$. Thus, as far as ramification at $p$ is concerned, the only case which the main theorem does not cover is
when $1\leq s < \mathrm{ord}_p(N_0)$.
However, in \cite{HidaNV} Hida is able to compute $L_{\chi_m}(\mathbf{f}_m)$ in this case, but the outcome turns out to be $0$ (\cite{HidaNV} Remark 4.2(c)), hence making the question of $p$-adic interpolation of $L_{\chi}(\mathbf{f})$ over arithmetic $\chi$'s in this case vacuous. In this sense, our result is optimal.
\item[(3)] Note that we opt to work with arithmetic Hecke characters $\varphi$ of conductor $N_{ns}p^s$, $s\geq \mathrm{ord}_p(N_0)$ or $s=0$, just for the sake of simplicity of our exposition, this being the simplest instance when the condition (F) in Theorem 4.1 of \cite{HidaNV} is fulfilled by $\chi_m:=\lambda \cdot \varphi \cdot |\cdot|_{M_\mathbb{A}}^m$. We could have treated characters such that their conductor is $l^{\tilde{\nu}(l)}$ at non-split primes $l|N_{ns}$, where $\tilde{\nu}(l)$ is \textbf{any positive integer} such that $\tilde{\nu}(l)\geq\nu(l)$. Then $\chi_m$ as above has conductor $\prod_{l\in C_{sp}}\mathfrak{l}^{\mathrm{ord}_l(c(\psi))}\prod_{l|N_{ns}}l^{\tilde{\nu}(l)}p^s$ and satisfies condition (F) in Theorem 4.1 of \cite{HidaNV}. Then by literally replacing in the above argument $l^{\nu(l)}$ with $l^{\tilde{\nu}(l)}$ for non-split primes, we would obtain a $W$-valued $p$-adic measure on  $\mathrm{Cl}_M^-(\prod_{l|N_{ns}}l^{\tilde{\nu}(l)}p^\infty)$ whose Mazur--Mellin transform provides $p$-adic interpolation of the ``square root'' of central critical Rankin--Selberg $L$-value  $L(\frac{1}{2},\hat{\pi}_{\mathbf{f}}\otimes (\varphi\lambda)^-)$. 
\item[(4)] We could also give alternative treatment for split primes $l|N_0$ in the sense that we could place them in set $A$ together with $p$. To specify $g_{1,l}$ in this case, we use $\bigl(\begin{smallmatrix} l^{\nu(l)} & 1 \\ 0 & 1  \end{smallmatrix} \bigr)$ instead of $\bigl(\begin{smallmatrix} l^{\nu(l)} & 0 \\ 0 & 1 \end{smallmatrix} \bigr)$ and note that the action of such a choice of $g_{1,l}$ on CM points on $Sh$ is explained in the same way as the corresponding action of $g_{1,p}$ in the argument above using Deligne's treatment of $Sh$. Then if $\varphi$ has conductor $N_0^{(p)}p^s$, $s\geq \mathrm{max}(1,\mathrm{ord}_p(N_0))$ or $s=0$, where $N_0^{(p)}$ is the conductor of $f_0$ outside $p$, we ultimately construct a $W$-valued $p$-adic measure on $\mathrm{Cl}_M^-(N_0^{(p)}p^\infty)$ whose Mazur--Mellin transform interpolates the ``square root'' of central critical Rankin--Selberg $L$-value  $L(\frac{1}{2},\hat{\pi}_{\mathbf{f}}\otimes (\varphi\lambda)^-)$. Then (3) above allows us to cover $\varphi$'s of conductor $\tilde{N}_0^{(p)}p^s$ where $\tilde{N}_0^{(p)}$ is \textbf{any positive integer} divisible by $N_0^{(p)}$. 
\end{itemize} }
\end{remark}

\end{document}